\documentclass[reqno,a4paper]{amsart}
\usepackage{graphicx}
\usepackage{amsmath}
\usepackage{mathrsfs}
\usepackage{amsfonts}
\usepackage{amssymb}
\usepackage{enumerate}
\usepackage{latexsym}
\usepackage{graphpap}
\usepackage{cite}
\usepackage[lined,boxed,commentsnumbered,ruled]{algorithm2e}
\usepackage{tikz}
\usepackage{multirow}
\usepackage{makecell}

\allowdisplaybreaks

\newtheorem{theorem}{Theorem}[section]
\newtheorem{corollary}[theorem]{Corollary}
\newtheorem{lemma}[theorem]{Lemma}
\newtheorem{proposition}[theorem]{Proposition}

\theoremstyle{definition}

\newtheorem{remark}[theorem]{Remark}

\newtheorem{algo}{Algorithm}

\theoremstyle{remark}
\numberwithin{equation}{section}

\newcommand{\bfa}{\mathbf{a}}
\newcommand{\bfb}{\mathbf{b}}

\newcommand{\bfs}{\mathbf{s}}
\newcommand{\bft}{\mathbf{t}}
\newcommand{\bfu}{\mathbf{u}}
\newcommand{\bfv}{\mathbf{v}}
\newcommand{\bfw}{\mathbf{w}}

\newcommand{\s}{\operatorname{\mathsf{mSt}}}

\newcommand{\sn}{\operatorname{\mathsf{mSt}}_n}
\newcommand{\rst}{\operatorname{\mathsf{rSt}}}
\newcommand{\lst}{\operatorname{\mathsf{lSt}}}

\newcommand{\ev}{\operatorname{\mathsf{ev}}}

\newcommand{\fp}{\operatorname{\mathsf{fp}}}
\newcommand{\ip}{\operatorname{\mathsf{ip}}}
\newcommand{\m}{\operatorname{\mathsf{m}}}

\newcommand{\op}{^*}
\newcommand{\var}{\mathsf{Var}}
\newcommand{\fb}{finitely based}
\newcommand{\nfb}{non-finitely based}
\newcommand{\inmon}{involution monoid}
\newcommand{\insem}{involution semigroup}

\newcommand{\con}{\operatorname{\mathsf{con}}}

\newcommand{\occ}{\operatorname{\mathsf{occ}}}

\begin{document}
\begin{sloppypar}
\title[Representations and identities of $\sn$ with involution]{Representations and identities of involution Plactic-like monoids arising from the meet of the stalactic congruence and its dual}%
\thanks{This research was partially supported by the National Natural Science Foundation of China (Nos.
12401017, 12271224, 12171213), the Fundamental Research Funds for the Central University (No. lzujbky-2023-ey06) and the Natural Science Foundation of Gansu Province (No. 23JRRA1055).}

\author[B.B.Han]{Bin Bin Han}
\address{Aliyun School of Big Data, Changzhou University,
  Changzhou, Jiangsu, 213164, P.R. China,
   School of Mathematics and Statistics, Lanzhou University, Lanzhou, Gansu 730000, P.R. China}
\email{hanbb24@cczu.edu.cn}
\author[W.T.Zhang]{Wen Ting Zhang$^\star$} \thanks{$^\star$Corresponding author}
\address{School of Mathematics and Statistics, Lanzhou University, Lanzhou, Gansu 730000, P.R. China}
\email{zhangwt@lzu.edu.cn}
\author[Y. F. Luo]{Yan Feng Luo}
\address{School of Mathematics and Statistics, Lanzhou University, Lanzhou, Gansu 730000, P.R. China}
\email{luoyf@lzu.edu.cn}

\subjclass[2010]{20M07, 20M30, 16Y60}

\keywords{plactic-like monoid; stalactic monoid; involution; representation; identity; finite basis problem; identity checking problem}

\begin{abstract}
Let $\sn$ be the plactic-like monoid obtained by factoring the free monoid  over a finite alphabet $\mathcal{A}_n$ by the meet of the stalactic congruence and its dual. In this paper, we prove that $\sn$ can be equipped with multiple involutions, and divide these involutions into $\lfloor\frac{n}{2}\rfloor+1$ types.  A faithful representation of $\sn$ under each of these involutions is obtained.  We give transparent combinatorial characterizations of identities for $\sn$ under each  involution, and so the finite basis problem and identity checking problem for them are solved.
\end{abstract}

\maketitle

\section{Introduction}
The plactic monoid, whose elements can be identified with Young
tableaux, is famous for its interesting connection with combinatorics \cite{Lot02} and applications in symmetric functions \cite{Mac08}, representation theory \cite{Ful97} and Kostka-Foulkes polynomials \cite{LS78,LS81}. Plactic-like monoids, whose elements can be identified with combinatorial objects as the plactic monoid, have also been extensively studied, such as the hypoplactic monoid \cite{KT97,Nov00}, the sylvester and \#-sylvester monoids \cite{HNT05}, the Baxter monoid \cite{Gir12}, the taiga monoid \cite{Pri13}, the stalactic monoid \cite{HNT07,Pri13} and the stylic monoid \cite{AR22}.
In the research of plactic-like monoids, their representations and identities have received much attention. The plactic monoids of finite rank were shown to have faithful tropical representations by Johnson and Kambites \cite{JK19}.
The hypoplactic, stalactic, taiga, sylvester and Baxter monoids of each finite rank were shown to have faithful tropical representations \cite{CJKM21}.
It is shown that all hypoplactic [resp. Baxter, sylvester, stalactic and taiga] monoids of rank greater than or equal to $2$ generate the same variety and are finitely based \cite{CMR21b, CJKM21,HZ,CMR21a}.
Aird and Ribeiro have given a faithful representation of the stylic monoids of each finite rank and solved the finite basis problems for them \cite{TD22}.
Volkov has solved the finite basis problem for the stylic monoid by different means \cite{volkov2022}.

Representations and identities can also be considered for their involution cases. Recall that an \textit{involution semigroup} $(S,\op)$ is a unary semigroup  that satisfies the identities
\begin{equation}
(x^*)^* \approx  x \quad \text{and} \quad (xy)^* \approx y^*x^*; \label{id: inv}
\end{equation}
the unary operation $^*$ is an involution of $S$, and $S$ is called the \textit{semigroup reduct} of $(S,\op)$.
For example, multiplicative matrix semigroup $(M_n,^D)$ over any field with skew transposition~$^D$ (that is,  $(A^{D})_{ij}=A_{(n+1-j)(n+1-i)}$) is an involution semigroup.
Refer to \cite{Lee20,ADPV14,ADV12a,ADV12b} for more information on the identities of involution semigroups.
Recently, Aird and Ribeiro have given a faithful representation of the stylic monoid with involution of each finite rank and solved the finite basis problems for them \cite{TD22}; Han et al. have given faithful representations of the hypoplactic and Baxter monoids with involution of each finite rank and solved the finite basis problems for them \cite{HZLZa, HZLZb}.

Recall that a monoid congruence is an equivalence relation which is
compatible with the product. Using two congruences, one can build two new congruences, that is,
their meet and their join; further one can obtain two new monoids by factoring the free monoid
over an alphabet by these two congruences.
For example, the Baxter congruence is the meet of the sylvester congruence and its dual\cite{Gir12}, and
the join of the sylvester congruence and its dual gives  the hypoplactic  congruence \cite{Nov00}.

Recently, Aird and Ribeiro studied the combinatorial and identity properties of the plactic-like monoid $\sn$  obtained by factoring the free monoid  over a finite alphabet $\mathcal{A}_n$ by the meet of the stalactic congruence and its dual\cite{AR24}. It was shown that the identity checking problem for $\sn$ is in the complexity class $\mathsf{P}$ and the varieties generated by $\sn$ for all $n\geq2$ are finitely based which can be defined by the identities
\begin{gather}
xhxykx\approx xhyxkx,  \quad xhyxky \approx xhxyky. \label{id:xhxyty}
\end{gather}

In this paper, we investigate the representations and identities of $\sn$ with involution.
This paper is organized as follows. Notation and background information of the paper
are given in Section \ref{sec: prelim}. In particular, we prove that $\sn$  has $\lfloor\frac{n}{2}\rfloor+1$ types of involutions, denoted by $^{\sigma_i}$ with $0\leq i \leq \lfloor\frac{n}{2}\rfloor$. In Section 3, we exhibit a faithful representation of $\sn$ under each involution. In Sections 4, 5 and 6, we give transparent combinatorial characterizations of the identities satisfied by them, respectively; based on these results, we solve the finite basis problem and the identity checking problem for $\sn$ under each involution. The properties of $\sn$ with involution  $^{\sigma_i}$ are summarized in Table~\ref{table:compare}.
{\small
\begin{table*}[t]
\centering
\caption{Some properties of $\sn$ under each involution}
\begin{tabular}{|c|c|c|c|c|c|c|c|c|c|c|c|c|}
\hline
Properties  &{\makecell[c]{$(\s_n,^{\sigma_0})$\\ $n\geq 2$}} & {\makecell[c]{$(\sn,^{\sigma_1})$ \\$n\geq 2$}} &{\makecell[l]{$(\sn,^{\sigma_i})$\\ $n\geq 4, 2\leq i\leq \lfloor\frac{n}{2}\rfloor$}} \\
\hline
{\makecell[c]{Matrix \\representations}}  & {\makecell[l]{Theorem 3.3}} &  {\makecell[c]{Theorem 3.3}} & Theorem 3.3 \\
\hline
{\makecell[c]{Characterization \\of its identities}}    & Theorem\,\ref{lem:inv1ids2} &  {\makecell[l]{Theorems\,\ref{lem:inv2ids2} and \ref{lem:inv2ids3}}} & Theorem\,\ref{lem:inv3ids4} \\
\hline
{\makecell[c]{Finite basis \\property}}	 & Finitely based &  Finitely based & Finitely based \\
\hline
{\makecell[c]{Identity \\checking problem}}   & $\mathsf{P}$ & $\mathsf{P}$ & $\mathsf{P}$ \\
\hline
\end{tabular}
\label{table:compare}
\end{table*}}

\section{Preliminaries}\label{sec: prelim}
Most of the notation and definition of this article are given in this section.
Refer to the monograph of Burris and Sankappanavar~\cite{BS81} for any undefined notation and terminology of universal algebra in general.	

\subsection{Words and identities}	
Let~$\mathcal{X}$ be a nonempty alphabet and $\mathcal{X}^* =\{x^* \,|\, x \in \mathcal{X}\}$ be a disjoint copy of~$\mathcal{X}$.
Elements of $\mathcal{X} \cup \mathcal{X}^*$ are called \textit{variables}, elements of the free involution monoid $F_{\mathsf{inv}}^{\varepsilon}(\mathcal{X})=(\mathcal{X} \cup \mathcal{X}^*)^+ \cup \{\varepsilon\}$ are called \textit{words}, and elements of the free monoid $\mathcal{X}^{\star}=\mathcal{X}^+ \cup \{\varepsilon\}$ are called \textit{plain words}. A word $\bfu$ is a \textit{factor} of a word $\bfw$ if $\bfw = \bfa\bfu\bfb$ for some $\bfa, \bfb \in F_{\mathsf{inv}}^{\varepsilon}(\mathcal{X})$.

Let $\bfu\in F_{\mathsf{inv}}^{\varepsilon}(\mathcal{X})$ be a word and $x, y \in \mathcal{X} \cup \mathcal{X}^*$ be distinct variables. The \textit{content} $\con (\bfu)$ of $\bfu$ is the set of variables that occur in~$\bfu$.  Denote by $\occ(x, \bfu)$ the number of occurrences of $x$ in $\bfu$. The \textit{plain projection} $\overline{\bfu}$ of $\bfu$ is the plain word obtained from $\bfu$ by removing all occurrences of the symbol $^*$.
The \textit{length} $|\bfu|$ of $\bfu$  is the number of variables occurring in $\bfu$.
The \textit{initial part} [resp. \textit{final part}] of $\bfu$, denoted by $\ip(\bfu)$ [resp. $\fp(\bfu)$], is the word obtained from $\bfu$ by retaining the occurrence of each variable $x\in\con(\bfu)$ satisfying $x\not\in\con(\bfu_1)$ [resp. $x\not\in\con(\bfu_2)$] where $\bfu=\bfu_1x\bfu_2$.
For any $x_1, x_2, \ldots, x_n \in \mathcal{X}\cup\mathcal{X}^*$ such that $\overline{x_1}, \overline{x_2}, \ldots, \overline{x_n}\in \mathcal{X}$ are distinct variables, let $\bfu[x_1, x_2, \ldots, x_n]$ denote the word obtained from $\bfu$ by retaining only the variables $x_1, x_1^*, x_2, x_2^*, \ldots, x_n, x_n^*$.


%
An \textit{identity} [resp. \textit{plain identity}] is an expression $\bfu \approx \bfv$, where $\bfu$ and $\bfv$ are words in $(\mathcal{X} \cup \mathcal{X}^*)^+$ [resp. $\mathcal{X}^+$]. The identity $\overline{\bfu}\approx \overline{\bfv}$ is called the \textit{plain projection} of $\bfu\approx \bfv$.
We write $\bfu = \bfv$ if $\bfu$ and $\bfv$ are identical.  An identity $\bfu \approx \bfv$ is \textit{non-trivial} if $\bfu \neq \bfv$.
An identity $\bfu \approx \bfv$ is \textit{balanced} if $\mathsf{occ}(x, \bfu)=\mathsf{occ}(x, \bfv)$ for any $x\in \mathsf{con}(\bfu\bfv)$.

An  {\insem} $(S,\op)$ \textit{satisfies} an identity $\bfs \approx \bft$, if for any substitution $\varphi: \mathcal{X} \to S$, the elements $\varphi(\bfs)$ and $\varphi(\bft)$ of $S$ coincide; in this case, $\bfs \approx \bft$ is also said to be an \textit{identity of}  $(S,\op)$. For any {\insem} $(S,\op)$,
a set $\Sigma$ of identities of $(S,\op)$ is an \textit{identity basis} for $(S,\op)$ if every identity of $(S,\op)$ is deducible from $\Sigma$.
An {\insem} is \textit{\fb} if it has some finite identity basis; otherwise, it is \textit{\nfb}. The \textit{finite basis problem} for involution semigroups is the decision problem of determining whether a given semigroup is finitely based. The \textit{identity checking problem} \textsc{Check-Id}$(S,\op)$ for an involution semigroup $(S,\op)$ is the decision problem whose
instance is an arbitrary identity $\bfu \approx \bfv$, and the answer to such an instance is `YES'
if $S$ satisfies $\bfu \approx \bfv$, and `NO' if it does not.

\begin{remark}\label{rem:delete}
Note that assigning the unit element to a variable~$x$ in an identity is effectively the same as removing all occurrences of~$x$ and~$x^*$. Therefore any {\inmon} that satisfies an identity $\bfs \approx \bft$ also satisfies the identity $\bfs [x_1, x_2, \ldots, x_n] \approx \bft[x_1, x_2, \ldots, x_n]$ for any distinct variables $\overline{x_1}, \overline{x_2}, \ldots, \overline{x_n} \in \mathcal{X}$.
\end{remark}


\subsection{The left-stalactic and right-stalactic monoids}
Let $\mathcal{A}= \{ 1 < 2 < 3 < \cdots \}$ denote the ordered infinite alphabet  and $\mathcal{A}_n= \{ 1 < 2 < 3 < \cdots <n\}$ the ordered finite alphabet.
The combinatorial objects and insertion algorithms related to the left-stalactic and right-stalactic monoids are given in the following.

A \textit{stalactic tableau} is a finite array of variables of $\mathcal{A}$ in which columns are top-aligned, and two variables appear in the same column if and only if they are equal.

\begin{algo}[{\cite[Section 3.7]{HNT07}}]\label{algo:stal}
Input a stalactic tableau $T$ and a variable $a \in \mathcal{A}$.
If $a$ does not appear in $T$, add $a$ to the left of the top row of $T$; if $a$ does appear in $T$, add $a$ to the bottom of the column in which $a$ appears. Output the new tableau.
\end{algo}

Let $a_1, a_2, \dots, a_k\in \mathcal{A}$ and $\bfw=a_1a_2\cdots a_k \in \mathcal{A}^{\star}$. Then the stalactic tableau ${\rm P}^\leftarrow_{\rst}(\bfw)$ of $\bfw$ is obtained as follows: reading
$\bfw$ from right-to-left, one starts with an empty tableau and inserts each variable in $\bfw$ into a stalactic tableau according to  Algorithm \ref{algo:stal}.
For example, ${\rm P}^\leftarrow_{\rst}(36131512665)$ is given as follows:
\begin{equation*}
\begin{tikzpicture}
[ampersand replacement=\&,row sep=-\pgflinewidth,column sep=-\pgflinewidth]
\matrix [nodes=draw]
{
\node {3}; \& \node{1}; \& \node{2};\& \node{6}; \& \node{5}; \\
\node {3}; \& \node{1}; \&  \& \node{6};  \& \node{5};\\
 \& \node{1}; \& \& \node{6}; \&\\
};
\end{tikzpicture}
\end{equation*}

Define the relation $\equiv_\mathsf{r}$ by
\[
\bfu \equiv_\mathsf{r} \bfv \quad\text{if and only if} \quad{\rm P}^\leftarrow_\mathsf{rSt}(\bfu) =  {\rm P}^\leftarrow_\mathsf{rSt} (\bfv)
\]
where $\bfu$ and $\bfv$ are words over
alphabet $\mathcal{A}$. It turns out that the relation $\equiv_\mathsf{r}$ is a congruence on $\mathcal{A}^{\star}$ which is called  the \textit{right-stalactic congruence}. The infinite-rank \textit{right-stalactic monoid} $\rst_{\infty}$ is the factor monoid $ \mathcal{A}^{\star}/_{\equiv_\mathsf{r}}$ and the  \textit{right-stalactic monoid} of rank $n$, denoted by $\rst_n$, is the factor monoid $ \mathcal{A}_n^{\star}/_{\equiv_\mathsf{r}}$.


A new algorithm by can be obtained by replacing ``left" with
``right" in Algorithm 1.
Using this new algorithm, one can compute a unique stalactic tableau from
a word $\bfw=a_1a_2\cdots a_k \in \mathcal{A}^{\star}$: starting from the empty tableau, read $\bfw$ from left-to-right and insert its variables one-by-one into the tableau. The resulting tableau is denoted by ${\rm P}^\rightarrow_{\lst}(\bfw)$. Thus the \textit{left-stalactic congruence} $\equiv_\mathsf{l}$, the infinite-rank \textit{left-stalactic monoid} $\mathsf{lSt_{\infty}}$ and \textit{left-stalactic monoid} of rank $n$ $\mathsf{lSt_{n}}$ are defined in a similar manner with the right-stalactic congruence and the right-stalactic monoid.


For any $\bfu \in \mathcal{A}^{\star}$, the \textit{evaluation} of $\bfu$, denoted by $\ev(\bfu)$, is the
tuple of non-negative integers, indexed by $\mathcal{A}$, whose $a$-th element is $\occ(a, \bfu)$, thus this tuple describes the number of each variable in $\mathcal{A}$ that appears in $\bfu$. It is immediate from the definition of the monoid $\rst_{\infty}$ [resp. $\lst_{\infty}$] that if $\bfu \equiv_\mathsf{r} \bfv$ [resp. $\bfu \equiv_\mathsf{l} \bfv$], then $\ev(\bfu) = \ev(\bfv)$, and hence it makes sense to define the evaluation of each element of $\rst_{\infty}$ [resp. $\lst_{\infty}$] to be the evaluation
of any word representing it. Note that $\ev(\bfu) = \ev(\bfv)$ implies that $\con(\bfu) = \con(\bfv)$. By the definitions of the left-stalactic and right-stalactic congruences, the following result holds.

\begin{lemma}[{\cite[Section 2.3]{AR24}}]\label{ipfp}
Let $\bfu, \bfv \in\mathcal{A}_n^{\star}$. Then  $\bfu \equiv_\mathsf{r} \bfv$ if and only if  $\ev(\bfu)=\ev(\bfv)$ and $\fp(\bfu)=\fp(\bfv)$; and $\bfu \equiv_\mathsf{l}\bfv$ if and only if  $\ev(\bfu)=\ev(\bfv)$ and $\ip(\bfu)=\ip(\bfv)$.
\end{lemma}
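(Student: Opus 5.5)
I would prove the statement for $\equiv_\mathsf{r}$ directly from Algorithm~\ref{algo:stal}; the case of $\equiv_\mathsf{l}$ then follows by left--right symmetry. The idea is that ${\rm P}^\leftarrow_{\rst}(\bfw)$ records exactly two pieces of data: how long each column is, and in what left-to-right order the columns appear.

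The first step is to read the tableau structurally. Since $\bfw$ is read from right to left, a letter $a\in\con(\bfw)$ opens a new column at the moment its last occurrence in $\bfw$ is read. Every later occurrence of $a$ is only appended to the bottom of that column. So the column labelled $a$ has length exactly $\occ(a,\bfw)$. New columns are always placed at the left of the top row. Hence, reading the top row from left to right lists the letters in the order in which their columns were created, latest first. Reversing this twice shows the top row lists the letters of $\con(\bfw)$ ordered by the position of their last occurrence in $\bfw$, read from left to right. By the definition of the final part, this word is exactly $\fp(\bfw)$: we keep the occurrence $x$ in $\bfw=\bfw_1x\bfw_2$ precisely when $x\notin\con(\bfw_2)$, i.e.\ the last occurrence of $x$.

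The second step is to check that these two pieces of data determine the tableau. A stalactic tableau is fully specified by its top row, read left to right, together with the height of each column, because each column consists only of copies of its top letter. So ${\rm P}^\leftarrow_{\rst}(\bfw)$ is determined by the pair $(\fp(\bfw),\ev(\bfw))$. Conversely, both $\fp(\bfw)$ and $\ev(\bfw)$ can be read off the tableau: $\fp(\bfw)$ is the top row, and $\occ(a,\bfw)$ is the height of column $a$, with $\occ(a,\bfw)=0$ exactly when $a$ does not appear in the tableau. This gives the claimed equivalence: $\bfu\equiv_\mathsf{r}\bfv$ if and only if $\ev(\bfu)=\ev(\bfv)$ and $\fp(\bfu)=\fp(\bfv)$.

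For $\equiv_\mathsf{l}$, the word is read from left to right and new letters are added on the right of the top row. The same argument now shows that a column opens at the first occurrence of its letter, and that the top row lists $\con(\bfw)$ by first occurrence. That word is $\ip(\bfw)$.

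The only step needing care is the bookkeeping in the first step. Two reversals are at work: the reading direction and the side on which new columns are inserted. The first step should be stated as an induction on $|\bfw|$, proving the invariant for ${\rm P}^\leftarrow_{\rst}(a\bfw)$ from the invariant for ${\rm P}^\leftarrow_{\rst}(\bfw)$. There are two cases: $a\notin\con(\bfw)$, where $\fp(a\bfw)=a\,\fp(\bfw)$; and $a\in\con(\bfw)$, where $\fp(a\bfw)=\fp(\bfw)$.
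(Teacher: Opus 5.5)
Your proof is correct and is essentially the paper's route. The paper gives no argument beyond saying the lemma follows from the definitions of the two insertion algorithms, and your observation that ${\rm P}^\leftarrow_{\rst}(\bfw)$ is determined by, and determines, the pair $(\fp(\bfw),\ev(\bfw))$ (top row equal to $\fp(\bfw)$, column heights given by $\ev(\bfw)$, checked by induction on $|\bfw|$), together with the mirror argument via first occurrences and $\ip$ for $\equiv_\mathsf{l}$, is exactly the unpacking of that claim.
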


Note that neither the left-stalactic monoid nor the right-stalactic monoid is anti-automorphic. Then neither the left-stalactic monoid nor the right-stalactic monoid can be equipped with an involution.
In fact, the left-stalactic and right-stalactic monoids of rank $n$ are anti-isomorphic.

\begin{proposition}
The monoids  $\rst_n$ and  $\lst_n$  are anti-isomorphic for each finite $n$.
\end{proposition}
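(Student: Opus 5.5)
The plan is to write down an explicit anti-isomorphism coming from word reversal on the free monoid. For $\bfw=a_1a_2\cdots a_k\in\mathcal{A}_n^{\star}$, let $\bfw^{\mathsf{rev}}=a_k\cdots a_2a_1$ denote its reversal. Reversal is an anti-automorphism of $\mathcal{A}_n^{\star}$, since $(\bfu\bfv)^{\mathsf{rev}}=\bfv^{\mathsf{rev}}\bfu^{\mathsf{rev}}$ and $(\bfw^{\mathsf{rev}})^{\mathsf{rev}}=\bfw$. We then define
\[
\psi:\rst_n\to\lst_n,\qquad \bfw/_{\equiv_\mathsf{r}}\mapsto \bfw^{\mathsf{rev}}/_{\equiv_\mathsf{l}},
\]
and show that $\psi$ is a well-defined bijective anti-homomorphism.

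The key step is the equivalence
\[
\bfu\equiv_\mathsf{r}\bfv \iff \bfu^{\mathsf{rev}}\equiv_\mathsf{l}\bfv^{\mathsf{rev}}.
\]
We will get this from Lemma~\ref{ipfp}. First, reversal does not change how often a letter occurs, so $\ev(\bfu^{\mathsf{rev}})=\ev(\bfu)$. Second, we check directly from the definitions that $\ip(\bfw^{\mathsf{rev}})=\fp(\bfw)^{\mathsf{rev}}$. An occurrence of $x$ is the last occurrence of $x$ in $\bfw$ exactly when the corresponding occurrence is the first occurrence of $x$ in $\bfw^{\mathsf{rev}}$. Reversal also reverses the relative order of the retained letters. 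Hence $\fp(\bfu)=\fp(\bfv)$ if and only if $\ip(\bfu^{\mathsf{rev}})=\ip(\bfv^{\mathsf{rev}})$. Combining this with the equality of evaluations, Lemma~\ref{ipfp} applied on both sides gives the equivalence.

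Alternatively, the same equivalence can be read off from the insertion algorithms. Inserting the letters of $\bfw$ right-to-left with Algorithm~\ref{algo:stal} (new letters placed on the left) produces the mirror image of inserting the letters of $\bfw^{\mathsf{rev}}$ left-to-right with the right-hand variant. Thus ${\rm P}^\rightarrow_{\lst}(\bfw^{\mathsf{rev}})$ is the left–right reflection of ${\rm P}^\leftarrow_{\rst}(\bfw)$. I would mention this only as a remark.

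From the equivalence, the direction ($\Rightarrow$) shows that $\psi$ is well defined, and ($\Leftarrow$) shows that $\psi$ is injective. Every $\bfw'\in\mathcal{A}_n^{\star}$ equals $(\bfw'^{\mathsf{rev}})^{\mathsf{rev}}$, so $\psi$ is surjective. The anti-homomorphism property, $\psi(\bfu\bfv)=\psi(\bfv)\psi(\bfu)$, follows because reversal is an anti-homomorphism and $\equiv_\mathsf{l}$ is a congruence. Finally $\psi$ maps the identity to the identity, so $\psi$ is a monoid anti-isomorphism.

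The only real obstacle is stating the identity $\ip(\bfw^{\mathsf{rev}})=\fp(\bfw)^{\mathsf{rev}}$ carefully. In particular we must confirm that both $\ip$ and $\fp$ keep the retained letters in their original relative order, so that reversal carries one to the other. Everything else is formal.
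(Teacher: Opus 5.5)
Your proposal is correct and follows essentially the same route as the paper: reverse words, use Lemma~\ref{ipfp} to see that reversal preserves evaluations and carries $\fp$ to the reversal of $\ip$, and then check well-definedness, injectivity, surjectivity and the anti-homomorphism property. The only difference is that the paper composes reversal with an arbitrary letter permutation $\sigma$ of $\mathcal{A}_n$, which it reuses later for the involutions. You take $\sigma$ to be the identity, and that suffices for this statement.
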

\begin{proof}
Let $\sigma$ be any permutation on $\mathcal{A}_n$.
Then for any $\bfw=a_1a_2\cdots a_k \in \mathcal{A}_n^{\star}$, define a map $\overline{\sigma}$ from $\rst_{n}$ to  $\lst_n$ given by $\overline{\sigma}(\bfw)=\sigma(a_k)\sigma(a_{k-1})\cdots \sigma(a_2)\sigma(a_1)$.
For any $\bfu, \bfv \in\mathcal{A}_n^{\star}$, if $\bfu \equiv_\mathsf{r}\bfv$,  then $\ev(\bfu)=\ev(\bfv)$ and $\fp(\bfu)=\fp(\bfv)$ by Lemma \ref{ipfp}. Without loss of generality, we assume that $\bfu=\bfu_1b_1\bfu_2b_2\cdots\bfu_kb_k$ such that $\bfu_i\in \mathcal{A}_n^{\star}$ and $\fp(\bfu)=b_1b_2\cdots b_k$. It follows from $\fp(\bfu)=\fp(\bfv)$  that $\fp(\bfv)=b_1b_2\cdots b_k$. Then by $\ev(\bfu)=\ev(\bfv)$,  $\bfv$ can be written into
$\bfv =\bfv_1b_1\bfv_2b_2\cdots\bfv_kb_k$ satisfying $\ev(\bfv_1\bfv_2\cdots\bfv_k)=\ev(\bfu_1\bfu_2\cdots\bfu_k)$. By the definition of $\overline{\sigma}$,
\begin{align*}
\overline{\sigma}(\bfu)&=\sigma(b_k)\overline{\sigma}(\bfu_k)\sigma(b_{k-1})\overline{\sigma}(\bfu_{k-1})\cdots \sigma(b_1)\overline{\sigma}(\bfu_1), \\ \overline{\sigma}(\bfv)&=\sigma(b_k)\overline{\sigma}(\bfv_k)\sigma(b_{k-1})\overline{\sigma}(\bfv_{k-1})\cdots \sigma(b_1)\overline{\sigma}(\bfv_1),
\end{align*}
and so $\ev(\overline{\sigma}(\bfu))=\ev(\overline{\sigma}(\bfv))$ and $\ip(\overline{\sigma}(\bfu))=\sigma(b_k)\sigma(b_{k-1})\cdots \sigma(b_1)=\ip(\overline{\sigma}(\bfv))$. Hence  $\overline{\sigma}(\bfu)\equiv_\mathsf{l}\overline{\sigma}(\bfv)$ by Lemma \ref{ipfp}. Symmetrically, $\overline{\sigma}(\bfu)\equiv_\mathsf{l}\overline{\sigma}(\bfv)$ implies $\bfu \equiv_\mathsf{r}\bfv$. Thus $\overline{\sigma}$ is well-defined and injective. Let $\sigma^{-1}$ be the reverse of $\sigma$.
For any $\bfw=a_1a_2\cdots a_k \in \mathcal{A}_n^{\star}$, it is routine to verify that $\overline{\sigma}(\sigma^{-1}(a_k)\sigma^{-1}(a_{k-1})\cdots\sigma^{-1}(a_2)\sigma^{-1}(a_1))=\bfw$. Thus $\overline{\sigma}$ is surjective, and so $\overline{\sigma}$ is bijective.

For any $\bfw=a_1a_2\cdots a_k, \bfw'=a'_1a'_2\cdots a'_{\ell} \in \mathcal{A}_n^{\star}$,
\begin{align*}
\overline{\sigma}(\bfw\bfw')&=\overline{\sigma}(a_1a_2\cdots a_ka'_1a'_2\cdots a'_{\ell})\\
&=\sigma(a'_{\ell})\sigma(a'_{\ell-1})\cdots\sigma(a'_1)\sigma(a_k)\sigma(a_{k-1})\cdots\sigma(a_1)=\overline{\sigma}(\bfw')\overline{\sigma}(\bfw).
\end{align*}
Therefore $\overline{\sigma}$ is an anti-isomorphism  from $\rst_{n}$ to $\lst_{n}$.
\end{proof}

\subsection{The meet-stalactic monoid and its involutions}
For any $\bfu \in \mathcal{A}^{\star}$, define ${\rm P}_{\s}(\bfu)$ to be the  stalactic tableau pair $({\rm P}^\rightarrow_\mathsf{lSt}(\bfu), {\rm P}^\leftarrow_\mathsf{rSt}(\bfu))$. Define the relation
\[
\bfu \equiv_\mathsf{m} \bfv \quad\text{if and only if}\quad  {\rm P}_{\s}(\bfu) =  {\rm P}_{\s}(\bfv)
\]
for all $\bfu, \bfv \in \mathcal{A}^{\star}$.  It is easy to see that
$\equiv_\mathsf{m} = \equiv_\mathsf{r} \wedge \equiv_{\mathsf{l}}$. Thus the relation $\equiv_\mathsf{m}$ is a congruence which is called the \textit{meet-stalactic congruence}.
Define the infinite-rank \textit{meet-stalactic monoid} $\s_{\infty}$ to be the factor monoid $\mathcal{A}^{\star}/\equiv_\mathsf{m}$ and the  \textit{meet-stalactic monoid} of rank $n$, denoted by $\sn$, to be the factor monoid $\mathcal{A}_n^{\star}/\equiv_\mathsf{m}$.
Now by the definition of the meet-stalactic congruence and Lemma \ref{ipfp}, the following result holds.
\begin{proposition}\label{pro:Sij}
Let $\bfu, \bfv \in\mathcal{A}_n^{\star}$. Then  $\bfu \equiv_\mathsf{m} \bfv$ if and only if  $\ev(\bfu)=\ev(\bfv)$, $\ip(\bfu)=\ip(\bfv)$ and $\fp(\bfu)=\fp(\bfv)$.
\end{proposition}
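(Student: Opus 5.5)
This follows almost directly from the definition of $\equiv_\mathsf{m}$ as a meet of two congruences, combined with Lemma~\ref{ipfp}. The plan is to unfold the definition and then apply Lemma~\ref{ipfp} to each component.

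First I would record that, by definition, $\bfu \equiv_\mathsf{m} \bfv$ means ${\rm P}_{\s}(\bfu)={\rm P}_{\s}(\bfv)$. Since ${\rm P}_{\s}$ is the ordered pair $({\rm P}^\rightarrow_\mathsf{lSt}(\cdot), {\rm P}^\leftarrow_\mathsf{rSt}(\cdot))$, equality of the pairs is equivalent to equality of both coordinates. So $\bfu \equiv_\mathsf{m} \bfv$ holds exactly when both ${\rm P}^\rightarrow_\mathsf{lSt}(\bfu)={\rm P}^\rightarrow_\mathsf{lSt}(\bfv)$ and ${\rm P}^\leftarrow_\mathsf{rSt}(\bfu)={\rm P}^\leftarrow_\mathsf{rSt}(\bfv)$ hold. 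By the definitions of $\equiv_\mathsf{l}$ and $\equiv_\mathsf{r}$, this says precisely that $\bfu\equiv_\mathsf{l}\bfv$ and $\bfu\equiv_\mathsf{r}\bfv$; this is the identity $\equiv_\mathsf{m}=\equiv_\mathsf{r}\wedge\equiv_\mathsf{l}$ noted above.

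Next I would apply Lemma~\ref{ipfp} to each conjunct, since $\bfu,\bfv\in\mathcal{A}_n^\star$. It shows that $\bfu\equiv_\mathsf{r}\bfv$ is equivalent to the conjunction of $\ev(\bfu)=\ev(\bfv)$ and $\fp(\bfu)=\fp(\bfv)$. Likewise, $\bfu\equiv_\mathsf{l}\bfv$ is equivalent to the conjunction of $\ev(\bfu)=\ev(\bfv)$ and $\ip(\bfu)=\ip(\bfv)$. Taking the conjunction of these two statements and merging the repeated evaluation condition gives exactly the three conditions in the proposition, proving both directions at once.

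No step is a real obstacle. The only point to check is that the tableau pair determines, and is determined by, the two stalactic congruence classes separately, which holds because we compare an ordered pair componentwise. If one wished to avoid citing Lemma~\ref{ipfp}, that lemma itself would be the substantive part. One would prove it by reading off from Algorithm~\ref{algo:stal}: the top row of ${\rm P}^\leftarrow_\mathsf{rSt}(\bfu)$ lists the last occurrences of letters in order, which is $\fp(\bfu)$, and the column heights record $\ev(\bfu)$. The symmetric statement holds for ${\rm P}^\rightarrow_\mathsf{lSt}$ and $\ip$.
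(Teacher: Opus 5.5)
Your proposal is correct and matches the paper's argument. The paper derives this proposition directly from $\equiv_\mathsf{m}=\equiv_\mathsf{r}\wedge\equiv_\mathsf{l}$ (componentwise equality of the tableau pair) together with Lemma~\ref{ipfp}, merging the shared evaluation condition exactly as you do.
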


By  \cite[Proposition~3.1]{AR24}, the monoid $\s_{\infty}$ can also be presented by $\langle \mathcal{A}| \mathcal{R}_{\s_{\infty}}\rangle $ where
\begin{equation}
\begin{aligned}
\mathcal{R}_{\s_{\infty}} =&\{ (a\bfu ab\bfv a, a\bfu ba\bfv a): a,b \in \mathcal{A}_n, \bfu,\bfv\in \mathcal{A}_n^{\star}\} \\
\cup & \{(a\bfu ab\bfv b, a\bfu ba\bfv b): a,b \in \mathcal{A}_n, \bfu,\bfv\in \mathcal{A}_n^{\star}\}.
\end{aligned}
\end{equation}

For each $n \in \mathbb{N}$, a presentation for the monoid $\sn$ can be obtained by restricting generators and relations of the above presentation to the generators in $\mathcal{A}_n$.

\begin{proposition}\label{pro:inv}
All involutions of $\sn$ are determined by all elements of order 1 and 2 in the symmetric group on $\mathcal{A}_n$.
\end{proposition}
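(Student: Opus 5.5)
Our plan is to show that every involution $^*$ of $\sn$ has the form $\bfw^*=\overline{\sigma}(\bfw)$ for a permutation $\sigma$ of $\mathcal{A}_n$ with $\sigma^2=\mathrm{id}$. Here $\overline{\sigma}(a_1\cdots a_k)=\sigma(a_k)\cdots\sigma(a_1)$. Conversely, we show that every such $\sigma$ gives an involution.

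\emph{Step 1 (each $\sigma$ with $\sigma^2=\mathrm{id}$ gives an involution).} Define $\overline{\sigma}$ on $\mathcal{A}_n^{\star}$ as in the proof that $\rst_n$ and $\lst_n$ are anti-isomorphic. We check that it respects $\equiv_\mathsf{m}$ using Proposition~\ref{pro:Sij}. Reversing a word and renaming letters by $\sigma$ permutes the evaluation. It also sends $\ip$ to the reverse-renamed $\fp$, and $\fp$ to the reverse-renamed $\ip$. Indeed, the first occurrences of letters in $\bfu$ become the last occurrences in $\overline{\sigma}(\bfu)$, in reversed order. Hence $\bfu\equiv_\mathsf{m}\bfv$ implies $\overline{\sigma}(\bfu)\equiv_\mathsf{m}\overline{\sigma}(\bfv)$. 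Alternatively, $\overline{\sigma}$ maps the relations in $\mathcal{R}_{\s_\infty}$ to relations of the same two shapes with the two types interchanged. So $\overline{\sigma}$ induces an anti-endomorphism of $\sn$. Since $\sigma^2=\mathrm{id}$ gives $\overline{\sigma}^2=\mathrm{id}$, it satisfies the identities \eqref{id: inv}.

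\emph{Step 2 (every involution arises this way).} Let $^*$ be an involution; it is an anti-automorphism, so it fixes the identity. The key point is that generators go to generators. The element $a\in\mathcal{A}_n$ is characterised intrinsically: it is a non-identity element that cannot be written as a product of two non-identity elements. This holds because, by Proposition~\ref{pro:Sij}, the evaluation is an invariant and lengths add. An anti-automorphism preserves this property, so $^*$ restricts to a permutation $\sigma$ of $\mathcal{A}_n$. Then $(a_1\cdots a_k)^*=a_k^*\cdots a_1^*=\overline{\sigma}(a_1\cdots a_k)$, so $^*$ is determined by $\sigma$. Moreover $(a^*)^*=a$ forces $\sigma^2=\mathrm{id}$, so $\sigma$ has order $1$ or $2$. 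Distinct $\sigma$ give distinct involutions, since they already differ on generators.

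\emph{Main obstacle.} The only delicate point is Step 1's well-definedness. There one must carefully verify that the reversal exchanges $\ip$ and $\fp$ in the same way as in the earlier proposition's proof, now for both parts simultaneously. We expect this to be routine bookkeeping.
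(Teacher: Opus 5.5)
Your proof is correct and follows essentially the paper's route. You establish well-definedness of $\overline{\sigma}$ via Proposition~\ref{pro:Sij}: reversal-with-renaming permutes the evaluation and exchanges $\ip$ and $\fp$. You also use invariance of evaluation to force an involution to permute the letters. Your characterisation of letters as the irreducible non-identity elements supplies the justification that the paper's one-line remark leaves implicit. Likewise, $\overline{\sigma}^2=\mathrm{id}$ gives bijectivity without the paper's separate injectivity and surjectivity checks.

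One harmless slip is in your optional aside. $\overline{\sigma}$ maps each of the two relation types in $\mathcal{R}_{\s_\infty}$ to a relation of the same type, not of the other type. The first type comes back with its sides swapped, and the second after renaming $a'=\sigma(b)$, $b'=\sigma(a)$.
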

\begin{proof}
Suppose that $^*$ is an involution of  $\s_n$. Note that in relations (2.1), the evaluation of words are preserved. Then
 for any $k\in \mathcal{A}_n$, there exists some $\ell \in \mathcal{A}_n$ such that $k^*=\ell$ and $\ell^*=k$.
Thus $^*$ is determined by a permutation $\sigma$ on $\mathcal{A}_n$ satisfying $\sigma^2=\sigma_0$ where $\sigma_0$ is the identity permutation on $\mathcal{A}_n$.

Let $\sigma$ be any permutation on $\mathcal{A}_n$ satisfying $\sigma^2=\sigma_0$. Then for any $\bfw=a_1a_2\cdots a_k \in \mathcal{A}_n^{\star}$, define a map from $\s_{n}$ to itself given by $\overline{\sigma}(\bfw)=\sigma(a_k)\sigma(a_{k-1})\cdots \sigma(a_2)\sigma(a_1)$.
For any $\bfu, \bfv \in\mathcal{A}_n^{\star}$, if $\bfu \equiv_\mathsf{m}\bfv$,  then $\ev(\bfu)=\ev(\bfv)$, $\ip(\bfu)=\ip(\bfv)$ and $\fp(\bfu)=\fp(\bfv)$ by Lemma \ref{pro:Sij}. Without loss of generality, we may assume $\bfu=b_0\bfu_1b_1\bfu_2b_2\cdots\bfu_kb_k$ such that $\bfu_i\in \mathcal{A}_n^{\star}$ and $b_0, b_1, b_2, \dots, b_k$ are the first or the last occurrences of all variables in $\bfu$. It follows from $\ip(\bfv)=\ip(\bfu)$, $\fp(\bfv)=\fp(\bfu)$
that $b_0, b_1, b_2, \dots, b_k$ are the first or the last occurrences of all variables in $\bfv$. Then by $\ev(\bfv)=\ev(\bfu)$, $\bfv$ can be written into
$\bfv=b_0\bfv_1b_1\bfv_2b_2\cdots\bfv_kb_k$ satisfying  $\ev(\bfv_1\bfv_2\cdots\bfv_k)=\ev(\bfu_1\bfu_2\cdots\bfu_k)$. By the definition of $\overline{\sigma}$,
\begin{align*}
\overline{\sigma}(\bfu)&=\sigma(b_k)\overline{\sigma}(\bfu_k)\sigma(b_{k-1})\overline{\sigma}(\bfu_{k-1})\cdots \sigma(b_1)\overline{\sigma}(\bfu_1)\sigma(b_0),\\  \overline{\sigma}(\bfv)&=\sigma(b_k)\overline{\sigma}(\bfv_k)\sigma(b_{k-1})\overline{\sigma}(\bfv_{k-1})\cdots \sigma(b_1)\overline{\sigma}(\bfv_1)\sigma(b_0),
\end{align*}
and so $\ev(\overline{\sigma}(\bfu))=\ev(\overline{\sigma}(\bfv))$, $\ip(\overline{\sigma}(\bfu))=\ip(\overline{\sigma}(\bfv))$ and $\fp(\overline{\sigma}(\bfu))=\fp(\overline{\sigma}(\bfv))$. Hence $\overline{\sigma}(\bfu)\equiv_\mathsf{m}\overline{\sigma}(\bfv)$ by Lemma \ref{pro:Sij}. Symmetrically, $\overline{\sigma}(\bfu)\equiv_\mathsf{m}\overline{\sigma}(\bfv)$ implies $\bfu \equiv_\mathsf{m}\bfv$. Thus $\overline{\sigma}$ is well-defined and injective. Since $\sigma^2=\sigma_0$, $\sigma^{-1}=\sigma$.
For any $\bfw=a_1a_2\cdots a_k \in \mathcal{A}_n^{\star}$, it is routine to verify that $\overline{\sigma}(\sigma(a_k)\sigma(a_{k-1})\cdots\sigma(a_2)\sigma(a_1))=\bfw$. Thus $\overline{\sigma}$ is surjective, and so $\overline{\sigma}$ is bijective.

For any $\bfw=a_1a_2\cdots a_k, \bfw'=a'_1a'_2\cdots a'_{\ell} \in \mathcal{A}_n^{\star}$,
\begin{align*}
\overline{\sigma}(\bfw\bfw')&=\overline{\sigma}(a_1a_2\cdots a_ka'_1a'_2\cdots a'_{\ell})\\
&=\sigma(a'_{\ell})\sigma(a'_{\ell-1})\cdots\sigma(a'_1)\sigma(a_k)\sigma(a_{k-1})\cdots\sigma(a_1)=\overline{\sigma}(\bfw')\overline{\sigma}(\bfw).
\end{align*}
Therefore $\overline{\sigma}$ is an anti-automorphism  of $\s_{n}$, that is, $\overline{\sigma}$ is an involution of $\s_{n}$.
\end{proof}

By Proposition 2.5, each element of order 1 and 2 in the symmetric group induces an involution on $\sn$.
Note that each element of order 2 in the symmetric group on $\mathcal{A}_n$ can be decomposed into a product of disjoint 2-cycles.
Clearly, there are at most $\lfloor\frac{n}{2}\rfloor$ disjoint 2-cycles in a permutation on $\mathcal{A}_n$.
Hence by the number of disjoint 2-cycles, the involutions of $\sn$ are divided into $\lfloor\frac{n}{2}\rfloor+1$ types.

{\bf Type 1.} the involution $^{\sigma_0}$ determined by the identity permutation $\sigma_0$ on $\mathcal{A}_n$, that is,
$\sigma_0(1)=1,  \sigma_0(2)=2, \dots, \sigma_0(n)=n$.
It is worth mentioning that $^{\sigma_0}$ is not an identity map on $\s_n$. For example, $([12]_\mathsf{m})^{\sigma_0}= ([2]_\mathsf{m})^{\sigma_0}([1]_\mathsf{m})^{\sigma_0}=[21]_\mathsf{m}\neq [12]_\mathsf{m}$.

{\bf Type 2.} the involution $^{\sigma_1}$ determined by the permutation $\sigma_1$ on $\mathcal{A}_n$ that can be decomposed into one $2$-cycle.

Since there are $\binom{n}{2}$ permutations on $\mathcal{A}_n$ which can be decomposed into one $2$-cycle, these permutations induces  $\binom{n}{2}$ involutions on $\s_n$. In fact, the involution semigroups formed by $\sn$ under each of these $\binom{n}{2}$ involutions are isomorphic.
Let $\tau, \tau'$ be any two permutations which can be decomposed into one $2$-cycle. Suppose that $\tau(i)=j,\tau(j)=i$ and $\tau$ fixes other elements in $\mathcal{A}_n$, and $\tau'(i')=j',\tau'(j')=i'$ and $\tau'$ fixes other elements in $\mathcal{A}_n$. Then $\tau$ and $\tau'$ induce an involution on $\s_n$  respectively, denoted as $^{\tau}$ and $^{\tau'}$. Let $\varphi$ be a map which maps $i$ to $i'$, $j$ to $j'$ and maps elements in $\{1,2, \dots, n\} \setminus\{i, j\}$
to elements in $\{1,2, \dots, n\}\setminus \{i', j'\}$ one by one in order. Then it is routine to verify that $\varphi$ induces an isomorphism between $(\s_n, ^\tau)$ and  $(\s_n, ^{\tau'})$.

{\bf Type 3.} the involution $^{\sigma_2}$ determined by the permutation $\sigma_2$ on $\mathcal{A}_n$ that can be decomposed into two disjoint $2$-cycles.

Clearly $n\geq4$.
Since there are $\binom{n}{2}\cdot\binom{n-2}{2}$
permutations on $\mathcal{A}_n$ which can be decomposed into two $2$-cycles, these permutations induces
$\binom{n}{2}\cdot\binom{n-2}{2}$ involutions on $\s_n$. In fact, the involution semigroups formed by $\sn$ under each of these $\binom{n}{2}\cdot\binom{n-2}{2}$ involutions are isomorphic.
Let $\delta, \delta'$ be any two permutations which can be decomposed into two $2$-cycles. Suppose that $\delta(i)=j,\delta(j)=i, \delta(h)=k,\delta(k)=h$ and $\delta$ fixes other elements in $\mathcal{A}_n$, and $\delta'(i)=j,\delta'(j)=i, \delta'(h)=k,\delta'(k)=h$ and $\delta'$ fixes other elements in $\mathcal{A}_n$. Then $\delta$ and $\delta'$ induce an involution on $\s_n$  respectively, denoted as $^{\delta}$ and $^{\delta'}$. Let $\varphi$ be a map which maps $i$ to $i'$, $j$ to $j'$, $h$ to $h'$, $k$ to $k'$, and maps elements in $\{1,2, \dots, n\} \setminus\{i, j, h, k\}$
to elements in $\{1,2, \dots, n\}\setminus \{i', j', h', k'\}$ one by one in order. Then it is routine to verify that $\varphi$ induces an isomorphism between $(\s_n, ^\delta)$ and  $(\s_n, ^{\delta'})$.

$\cdots\cdots$

{\bf Type $\mathbf{\lfloor\frac{n}{2}\rfloor+1}$.} the involution $^{\sigma_{\lfloor\frac{n}{2}\rfloor}}$ determined by the permutation $\sigma_{\lfloor\frac{n}{2}\rfloor}$ on $\mathcal{A}_n$ that can be decomposed into $\lfloor\frac{n}{2}\rfloor$ disjoint $2$-cycles.

If $n$ is even, then there are $\binom{n}{2}\cdot\binom{n-2}{2}\cdots \binom{4}{2}$
permutations on $\mathcal{A}_n$ which can be decomposed into $\lfloor\frac{n}{2}\rfloor$ disjoint $2$-cycles, these permutations induces
$\binom{n}{2}\cdot\binom{n-2}{2}\cdots \binom{4}{2}$ involutions on $\s_n$. In fact, the involution semigroups formed by $\sn$ under each of these $\binom{n}{2}\cdot\binom{n-2}{2}\cdots \binom{4}{2}$ involutions are isomorphic.
Let $\eta, \eta'$ be any two permutations which can be decomposed into $\lfloor\frac{n}{2}\rfloor$ disjoint $2$-cycles. Suppose that
\begin{align*}
\eta(i_1)=j_1,\eta(j_1)=i_1, \eta(i_2)=j_2,\eta(j_2)=i_2, \dots, \eta(i_{\lfloor\frac{n}{2}\rfloor})=j_{\lfloor\frac{n}{2}\rfloor},\eta(j_{\lfloor\frac{n}{2}\rfloor})=i_{\lfloor\frac{n}{2}\rfloor},\\
\eta(i'_1)=j'_1,\eta(j'_1)=i'_1, \eta(i'_2)=j'_2,\eta(j'_2)=i'_2, \dots, \eta(i'_{\lfloor\frac{n}{2}\rfloor})=j'_{\lfloor\frac{n}{2}\rfloor},\eta(j'_{\lfloor\frac{n}{2}\rfloor})=i'_{\lfloor\frac{n}{2}\rfloor}.
\end{align*}
\noindent Then $\eta$ and $\eta'$ induce an involution on $\s_n$ respectively, denoted as $^{\eta}$ and $^{\eta'}$. Let $\varphi$ be a map which maps $i_{\ell}$ to $i'_{\ell}$, $j_{\ell}$ to $j'_{\ell}$ for $1\leq \ell \leq \lfloor\frac{n}{2}\rfloor$. Then it is routine to verify that $\varphi$ induces an isomorphism between $(\s_n, ^\eta)$ and  $(\s_n, ^{\eta'})$.

If $n$ is odd, then there are $\binom{n}{2}\cdot\binom{n-2}{2}\cdots \binom{3}{2}$
permutations on $\mathcal{A}_n$ which can be decomposed into $\lfloor\frac{n}{2}\rfloor$ disjoint $2$-cycles, these permutations induces
$\binom{n}{2}\cdot\binom{n-2}{2}\cdots \binom{3}{2}$ involutions on $\s_n$. Similar to the case when $n$ is even, the involution semigroups formed by $\sn$ under each of these $\binom{n}{2}\cdot\binom{n-2}{2}\cdots \binom{3}{2}$ involutions are isomorphic.

\section{Representations of $\s_n$ with involution}\label{}%
In \cite[Theorem 4.4]{CJKM21}, Cain et al. have given a faithful representation of the right-stalactic monoid. According to this result, a faithful representation of $\s_n$ under each involution is obtained.

\begin{theorem}[{\cite[Theorem 4.4]{CJKM21}}]
Let $\mathbb{S}$ be a commutative unital semiring with zero containing an element
of infinite multiplicative order. The right-stalactic monoid of rank $n$ admits a faithful representation by upper triangular matrices of size $n^2$ over $\mathbb{S}$ having block-diagonal structure with largest block of size $2$ (or size $1$ if $n = 1$).
\end{theorem}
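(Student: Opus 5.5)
The approach is to use Lemma~\ref{ipfp}: an element of $\rst_n$ is determined by $\ev(\bfw)$ together with $\fp(\bfw)$. So it suffices to define a monoid morphism $\rho:\mathcal{A}_n^{\star}\to M_{n^2}(\mathbb{S})$ with two properties. First, $\rho(\bfw)$ should depend only on $(\ev(\bfw),\fp(\bfw))$, which makes $\rho$ factor through $\equiv_\mathsf{r}$. Second, $(\ev(\bfw),\fp(\bfw))$ should be recoverable from $\rho(\bfw)$, which gives faithfulness. Since $\mathcal{A}_n^{\star}$ is free, any choice of images of the generators $1,\dots,n$ gives a morphism. I will take these images block-diagonal, with $n$ blocks of size $1$ and $\binom{n}{2}$ blocks of size $2$, for a total size $n+2\binom n2=n^2$. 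Fix $t\in\mathbb{S}$ of infinite multiplicative order.

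\emph{Evaluation blocks.} For each $a\in\mathcal{A}_n$, use a $1\times1$ block in which $a\mapsto t$ and every other letter maps to $1$. The product over $\bfw$ in this block is $t^{\occ(a,\bfw)}$. Because $t$ has infinite order, this records $\occ(a,\bfw)$ exactly, so these $n$ blocks together determine $\ev(\bfw)$.

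\emph{Order blocks.} For each pair $a<b$, use a $2\times2$ upper triangular block. Letters $c\notin\{a,b\}$ map to the identity, and
\[
a\mapsto\begin{pmatrix}1&0\\0&0\end{pmatrix},\qquad b\mapsto\begin{pmatrix}1&1\\0&0\end{pmatrix}.
\]
For a product of upper triangular $2\times2$ matrices with diagonals $(d_1^{(i)},d_2^{(i)})$ and corner entries $e^{(i)}$, the diagonal of the product is $(\prod_i d_1^{(i)},\prod_i d_2^{(i)})$. The corner entry is $\sum_i\bigl(\prod_{j<i}d_1^{(j)}\bigr)e^{(i)}\bigl(\prod_{j>i}d_2^{(j)}\bigr)$. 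Here every $d_1$ equals $1$, and $d_2=0$ exactly at occurrences of $a$ or $b$. So the only term that can survive is the one at the last occurrence of a letter of $\{a,b\}$, and it contributes $1$ precisely when that letter is $b$.

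This gives the block explicitly. Its $(1,1)$ entry is always $1$. Its $(2,2)$ entry is $1$ if neither $a$ nor $b$ occurs in $\bfw$, and $0$ otherwise. Its corner is $1$ if the last occurrence of $b$ comes after the last occurrence of $a$ (including when $a$ does not occur), and $0$ otherwise.

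These quantities depend only on $\ev(\bfw)$ and on the relative order of last occurrences, i.e.\ on $\fp(\bfw)$, so $\rho$ is well-defined on $\rst_n$. Conversely, $\ev(\bfw)$ gives $\con(\bfw)$. For any two distinct $a,b\in\con(\bfw)$, the corner of the $\{a,b\}$ block tells which of the two has the later last occurrence. Since $\fp(\bfw)$ is exactly $\con(\bfw)$ listed in order of last occurrences, these comparisons reconstruct $\fp(\bfw)$. By Lemma~\ref{ipfp}, $\rho$ is therefore injective on $\rst_n$.

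The main obstacle is making sure that no cancellation can occur in an arbitrary commutative semiring, where sums of $1$'s might vanish. The gadget above is designed to avoid this: each corner entry is a sum with at most one nonzero term, so it is literally $0$ or $1$. It only remains to assume $0\ne1$, which holds for any nontrivial semiring containing an element of infinite order. For $n=1$ there are no pairs, and only the size-$1$ block remains, which agrees with the parenthetical clause of the theorem.
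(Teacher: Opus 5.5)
Your proposal is correct and complete. The paper itself gives no proof of this statement. It quotes the result from \cite[Theorem 4.4]{CJKM21}, and Section 3 uses the resulting representation $\psi_r$ only as a black box, through the anti-isomorphism $\overline{\sigma}$ and the skew transpose $^D$.

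Your argument is a self-contained construction. By Lemma~\ref{ipfp}, it suffices to separate $\ev$ and $\fp$. You detect $\ev$ with $n$ blocks of size $1$ carrying $t^{\occ(a,\bfw)}$. You detect $\fp$ with one $2\times 2$ block for each pair $a<b$. In that block the images of $a$ and $b$ form a two-element right-zero semigroup, so the block product is simply the image of the last letter from $\{a,b\}$. This gives exactly the size $n+2\binom{n}{2}=n^2$ and the maximal block size $2$ (size $1$ when $n=1$) asserted in the cited result.

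You also handle correctly the two points that could fail over an arbitrary commutative semiring. Each corner entry is a sum with at most one nonzero term, so no cancellation can occur. And $0\neq 1$ is forced, because $0$ is absorbing and an element of infinite multiplicative order exists.

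What your version adds is an explicit $\psi_r$. It could be substituted directly into the constructions of $\psi_\ell$ and $\psi^{(i)}$ in Section 3, without appealing to the external reference.
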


Denote by $\psi_r$ the representation of right-stalactic monoid given in \cite[Theorems 4.3 and 4.4]{CJKM21}. Let $\sigma$ be any permutation on $\mathcal{A}_n$. For any $\bfw=a_1a_2\cdots a_k \in \mathcal{A}_n^{\star}$, define a map $\overline{\sigma}$ from $\lst_n$ to $\rst_n$ given by $\overline{\sigma}(\bfw)=\sigma(a_k)\sigma(a_{k-1})\cdots \sigma(a_2)\sigma(a_1)$.  By Proposition 2.3, $\overline{\sigma}$ is an anti-isomorphism between $\rst_n$ and $\lst_n$. Define a map $\psi_{\ell}: \mathsf{lSt}_n\rightarrow UT_{n^2}(\mathbb{S})$ given by $ \bfw\mapsto (\psi_r(\overline{\sigma}(\bfw)))^{D}$.
\begin{theorem}
The map $\psi_{\ell}:\mathsf{lSt}_n \rightarrow UT_{n^2}(\mathbb{S})$ is a faithful representation of $\mathsf{lSt}_n$.
\end{theorem}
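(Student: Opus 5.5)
The plan is to write $\psi_\ell$ as a composition of three maps and check that each one is well defined, faithful, and behaves correctly with respect to products. The maps are: the anti-isomorphism $\overline{\sigma}\colon \lst_n\to\rst_n$, then the faithful representation $\psi_r\colon \rst_n\to UT_{n^2}(\mathbb{S})$ from \cite[Theorem 4.4]{CJKM21}, then the skew transposition ${}^D$ on $UT_{n^2}(\mathbb{S})$.

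First, $\overline{\sigma}$ is well defined and bijective. By Proposition 2.3 (read in the direction $\lst_n\to\rst_n$, which is symmetric to the argument given there), $\bfu\equiv_\mathsf{l}\bfv$ holds if and only if $\overline{\sigma}(\bfu)\equiv_\mathsf{r}\overline{\sigma}(\bfv)$. This holds because reversal together with the letter permutation $\sigma$ exchanges $\ip$ and $\fp$ and preserves equality of evaluations, so Lemma \ref{ipfp} applies. Hence $\overline{\sigma}$ is a bijection $\lst_n\to\rst_n$ with $\overline{\sigma}(\bfw\bfw')=\overline{\sigma}(\bfw')\overline{\sigma}(\bfw)$.

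Next I would check that ${}^D$ is an anti-automorphism of $M_{n^2}(\mathbb{S})$ that maps $UT_{n^2}(\mathbb{S})$ into itself. It is an involution on matrices, hence injective. Since $\mathbb{S}$ is commutative, the standard computation
\[
((AB)^D)_{ij}=(AB)_{(N+1-j)(N+1-i)}=\sum_k A_{(N+1-j)k}B_{k(N+1-i)}=(B^DA^D)_{ij}
\]
goes through, where $N=n^2$ and the index is substituted as $k\mapsto N+1-k$. Upper triangularity is preserved because $i\le j$ if and only if $N+1-j\le N+1-i$. Commutativity of $\mathbb{S}$ is the only point that needs care here, and it is part of the hypotheses.

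Combining the pieces, $\psi_\ell(\bfw\bfw')=(\psi_r(\overline{\sigma}(\bfw'))\psi_r(\overline{\sigma}(\bfw)))^D=\psi_\ell(\bfw)\psi_\ell(\bfw')$. The two order reversals, one from $\overline{\sigma}$ and one from ${}^D$, cancel, so $\psi_\ell$ is a homomorphism. The identity is also preserved: the empty word maps to the identity matrix, and the identity matrix is fixed by ${}^D$. Faithfulness follows because $\psi_\ell$ is a composite of the injective maps $\overline{\sigma}$, $\psi_r$ and ${}^D$. The image consists of upper triangular matrices of size $n^2$, and its block-diagonal structure is reflected along the anti-diagonal. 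I expect no real obstacle beyond the sign-free index bookkeeping for ${}^D$ and making sure the direction of the anti-isomorphism in Proposition 2.3 is stated correctly.
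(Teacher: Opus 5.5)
Your proposal is correct and follows essentially the same route as the paper. You write $\psi_\ell$ as the composite of $\overline{\sigma}$, $\psi_r$ and ${}^D$, let the two order reversals cancel to get $\psi_\ell(\bfw\bfw')=\psi_\ell(\bfw)\psi_\ell(\bfw')$, and obtain faithfulness because a composite of injective maps is injective. Beyond the paper, you also verify details it takes for granted: that $\overline{\sigma}$ is well defined in the direction $\lst_n\to\rst_n$, that ${}^D$ reverses products (which uses commutativity of $\mathbb{S}$), that ${}^D$ preserves upper triangularity, and that the identity is preserved.
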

\begin{proof}
Clearly, $\psi_{\ell}$ is injective by its definition. It suffices to show that for any $\bfw_1, \bfw_2\in \mathcal{A}_n^{\star}$,
$\psi_{\ell}(\bfw_1\bfw_2)=\psi_{\ell}(\bfw_1)\psi_{\ell}(\bfw_2)$.
Denote by $\bfw_1=b_1b_2\cdots b_m$ and  $\bfw_2=c_1c_2\cdots c_n$. Then
\begin{align*}
&\psi_{\ell}(\bfw_1\bfw_2)\\
&=\psi_{\ell}(b_1b_2\cdots b_mc_1c_2\cdots c_n)\\
&=(\psi_r(\overline{\sigma}(b_1b_2\cdots b_mc_1c_2\cdots c_n)))^{D}\\
&=(\psi_r(\sigma(c_n)\sigma(c_{n-1})\cdots\sigma(c_1)\sigma(b_m)\sigma(b_{m-1})\cdots\sigma(b_1)))^D\\
&=(\psi_r(\sigma(c_n))\psi_r(\sigma(c_{n-1}))\cdots\psi_r(\sigma(c_1))\psi_r(\sigma(b_m))\psi_r(\sigma(b_{m-1}))\cdots\psi_r(\sigma(b_1)))^D\\
&=(\psi_r(\sigma(b_1)))^D(\psi_r(\sigma(b_{2})))^D\cdots(\psi_r(\sigma(b_m)))^D(\psi_r(\sigma(c_1)))^D(\psi_r(\sigma(c_{2})))^D\cdots(\psi_r(\sigma(c_n)))^D\\
&=(\psi_r(\sigma(b_m))\psi_r(\sigma(b_{m-1}))\cdots\psi_r(\sigma(b_1)))^D(\psi_r(\sigma(c_n))\psi_r(\sigma(c_{n-1}))\cdots\psi_r(\sigma(c_1)))^D\\
&=(\psi_r(\sigma(b_m)\sigma(b_{m-1})\cdots\sigma(b_1)))^D(\psi_r(\sigma(c_n)\sigma(c_{n-1})\cdots\sigma(c_1)))^D\\
&=(\psi_r(\overline{\sigma}(\bfw_1)))^D(\psi_r(\overline{\sigma}(\bfw_2)))^D\\
&=\psi_{\ell}(\bfw_1)\psi_{\ell}(\bfw_2),
\end{align*}
as required.
\end{proof}

Recall that for $i=0,1,2, \dots, \lfloor\frac{n}{2}\rfloor$, $\sigma_i $ is the permutation which can be decomposed into $i$ disjoint 2-cycles and $\sigma_i $ induces an involution $^{\sigma_i}$ on $\sn$.
For any $\bfw=a_1a_2\cdots a_k \in \mathcal{A}_n^{\star}$, define a map $\overline{\sigma_i}$ from $\lst_n$ to $\rst_n$ given by $\overline{\sigma_i}(\bfw)=\sigma_i(a_k)\sigma_i(a_{k-1})\cdots \sigma_i(a_2)\sigma_i(a_1)$.
By Proposition 2.3, $\overline{\sigma_i} $ is an anti-isomorphism between $\rst_n$ and $\lst_n$. Define $\psi_{\ell}^{(i)}: \mathsf{lSt}_n\rightarrow UT_{n^2}(\mathbb{S})$ given by $ \bfw\mapsto (\psi_r(\overline{\sigma_i}(\bfw)))^{D}$.
By Theorem 3.2, $\psi_{\ell}^{(i)}:\mathsf{lSt}_n \rightarrow UT_{n^2}(\mathbb{S})$ is a faithful representation of $\mathsf{lSt}_n$.
 Denote by $\mathsf{diag}\{\Lambda_1,\Lambda_2,\dots, \Lambda_n\}$ the block diagonal matrix
\[
\begin{pmatrix}
\Lambda_1 & & & \\
 & \Lambda_2 & &\\
 & & \ddots &\\
  &&& \Lambda_n
\end{pmatrix}
\]
where $\Lambda_1,\Lambda_2,\dots, \Lambda_n$ are square matrices.
Define a map $\psi^{(i)}: \mathsf{mSt}_n\rightarrow UT_{2n^2}(\mathbb{S})$ given by $ \bfw\mapsto \mathsf{diag}\{\psi_{\ell}^{(i)}(\bfw),\psi_r(\bfw)\}$.

\begin{theorem}
The map $\psi^{(i)}:(\mathsf{mSt}_n,^{\sigma_i}) \rightarrow (UT_{2n^2}, ^D)$ is a faithful representation of $(\mathsf{mSt}_n,^{\sigma_i})$ for $i=0,1,2, \dots, \lfloor\frac{n}{2}\rfloor$.
\end{theorem}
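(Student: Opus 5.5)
We must show three things about $\psi^{(i)}$: it is well defined on $\sn$, it is an injective monoid homomorphism, and it intertwines the involution, i.e. $\psi^{(i)}(\bfw^{\sigma_i})=(\psi^{(i)}(\bfw))^D$.

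\emph{Well-definedness, homomorphism and faithfulness.} Since $\equiv_\mathsf{m}=\equiv_\mathsf{r}\wedge\equiv_\mathsf{l}$, each class $[\bfw]_\mathsf{m}$ determines $[\bfw]_\mathsf{r}$ and $[\bfw]_\mathsf{l}$. Hence $\psi_r(\bfw)$ (through $\rst_n$) and $\psi_\ell^{(i)}(\bfw)$ (through $\lst_n$) depend only on the $\equiv_\mathsf{m}$-class, so $\psi^{(i)}$ is well defined. Block-diagonal matrices multiply blockwise. Therefore $\psi^{(i)}(\bfw_1\bfw_2)=\psi^{(i)}(\bfw_1)\psi^{(i)}(\bfw_2)$ follows from Theorem 3.1 for $\psi_r$ and Theorem 3.2 for $\psi_\ell^{(i)}$. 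For faithfulness, suppose $\psi^{(i)}(\bfu)=\psi^{(i)}(\bfv)$. Comparing blocks gives $\psi_r(\bfu)=\psi_r(\bfv)$ and $\psi^{(i)}_\ell(\bfu)=\psi^{(i)}_\ell(\bfv)$. Faithfulness of both representations then yields $\bfu\equiv_\mathsf{r}\bfv$ and $\bfu\equiv_\mathsf{l}\bfv$, hence $\bfu\equiv_\mathsf{m}\bfv$.

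\emph{Compatibility with the involution.} This is the main step. Write $N=n^2$ for the block size, so $\psi^{(i)}(\bfw)$ has size $2N$. We need two facts.
\begin{enumerate}[(a)]
\item For a block-diagonal matrix of size $2N$ with two blocks of size $N$, skew transposition swaps the blocks and skew-transposes each:
\[
(\mathsf{diag}\{A,B\})^D=\mathsf{diag}\{B^D,A^D\}.
\]
This is checked from the index formula $(A^D)_{pq}=A_{(2N+1-q)(2N+1-p)}$. A diagonal entry in the first block at position $(p,q)$ with $p,q\le N$ maps to position $(2N+1-q,2N+1-p)$, which lies in the second block, and there the local index is the size-$N$ skew transpose. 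Off-diagonal blocks stay zero.
\item The involution on $\sn$ from Proposition 2.5 is $\bfw^{\sigma_i}=\overline{\sigma_i}(\bfw)$, and $\sigma_i^2=\sigma_0$ gives $\overline{\sigma_i}(\overline{\sigma_i}(\bfw))=\bfw$ at the level of words.
\end{enumerate}

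\emph{Computing the two blocks.} The first block of $\psi^{(i)}(\bfw^{\sigma_i})$ is
\[
\psi^{(i)}_\ell(\overline{\sigma_i}(\bfw))=(\psi_r(\overline{\sigma_i}\,\overline{\sigma_i}(\bfw)))^D=(\psi_r(\bfw))^D .
\]
The second block is $\psi_r(\overline{\sigma_i}(\bfw))$. Since $D$ is an involution on matrices, this equals $((\psi_r(\overline{\sigma_i}(\bfw)))^D)^D=(\psi^{(i)}_\ell(\bfw))^D$. Therefore
\[
\psi^{(i)}(\bfw^{\sigma_i})=\mathsf{diag}\{(\psi_r(\bfw))^D,(\psi^{(i)}_\ell(\bfw))^D\}=(\psi^{(i)}(\bfw))^D
\]
by (a). The main care needed is the index bookkeeping in (a), including checking that the $D$ on $UT_{2n^2}$ restricts to the $D$ on $UT_{n^2}$ blockwise. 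Note also that this proposal relies on $\psi^{(i)}_\ell$ using exactly the permutation $\sigma_i$ that defines the involution.
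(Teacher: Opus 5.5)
Your proof is correct and follows essentially the same route as the paper: faithfulness comes from Theorems 3.1 and 3.2 applied blockwise, and compatibility with the involution comes from $\overline{\sigma_i}(\overline{\sigma_i}(\bfw))=\bfw$, $(M^D)^D=M$ and the block-swap identity $(\mathsf{diag}\{A,B\})^D=\mathsf{diag}\{B^D,A^D\}$. The differences are cosmetic. The paper checks $\psi^{(i)}(j^{\sigma_i})=(\psi^{(i)}(j))^D$ only on generators $j\in\mathcal{A}_n$, whereas you run the same computation for an arbitrary word $\bfw$, which removes the implicit reduction to generators. You also spell out the block-swap and faithfulness details that the paper leaves unstated.
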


\begin{proof}
By the definition of $\s_n$ and Theorems 3.1 and 3.2, $\psi^{(i)}$ is a faithful representation of $\mathsf{mSt}_n$. To show that $\psi^{(i)}$ is a faithful representation of $(\mathsf{mSt}_n,^{\sigma_i})$, we only need to show that for any $\bfw\in \mathcal{A}_n^{\star}$, $\psi^{(i)}(\bfw^{\sigma_i})=(\psi^{(i)}(\bfw))^D$. It suffices to show that for any $j\in \mathcal{A}_n$, $\psi^{(i)}(j^{\sigma_i})=(\psi^{(i)}(j))^D$.
By the definition of $\psi^{(i)}$,
\begin{align*}
\psi^{(i)}(j^{\sigma_i})&=\psi^{(i)}(\sigma_i(j))\\
&= \mathsf{diag}\{\psi_{\ell}^{(i)}(\sigma_i(j)),\psi_r(\sigma_i(j))\}\\
&= \mathsf{diag}\{(\psi_r(\sigma_i(\sigma_i(j))))^D,\psi_r(\sigma_i(j))\}\\
&= \mathsf{diag}\{(\psi_r(j))^D,\psi_r(\sigma_i(j))\}\\
&= \mathsf{diag}\{(\psi_r(\sigma_i(j)))^D,\psi_r(j)\}^D\\
&= (\mathsf{diag}\{\psi_{\ell}^{(i)}(j),\psi_r(j)\})^D\\
&=(\psi^{(i)}(j))^D,
\end{align*}
as required.
\end{proof}

\section{Identities of $(\sn, ^{\sigma_0})$}\label{sec:3.2}%

In this section, we investigate the identities of $\sn$ with involution $^{\sigma_0}$ determined by the identity permutation on $\mathcal{A}_n$.
We prove that for all finite $n\geq 2$, $(\sn, ^{\sigma_0})$ satisfy the same identities. Further,
we give characterizations of the identities satisfied by them; based on these results,
the finite basis problem and identity checking problem for $(\sn, ^{\sigma_0})$ are solved.

\subsection{Embedding into a direct product of copies of $(\s_2, ^{\sigma_0})$}
For $n >  2$, we show that $(\sn,^{\sigma_0})$  can be embedded into a direct product of copies of $(\s_2,^{\sigma_0})$.
For any $i,j \in \mathcal{A}_n$ with $i < j$, define a map $\varphi_{ij}$ from $\mathcal{A}_n$ to $\s_2$ in the following way: for any $k \in \mathcal{A}_n$,
\begin{align*}
k &\longmapsto \begin{cases}
[1]_{\m} & \text{if } k = i,\\
[2]_{\m} & \text{if } k = j,\\
\left[\varepsilon\right]_{\m} & \text{otherwise}.
\end{cases}
\end{align*}
It is routine to verify that the map $\varphi_{ij}$ can be extended to a homomorphism from $(\mathcal{A}^{\star}_n, ^{\sigma_0})$ to $(\s_2, ^{\sigma_0})$.
In fact, the following result holds.



\begin{lemma}
\label{lem:varphi_equiv_sn}
Let $\bfu, \bfv \in \mathcal{A}_n^{\star}$. Then $\bfu \equiv_{\mathsf{m}} \bfv$ if and only if $\varphi_{ij}(\bfu) \equiv_{\mathsf{m}} \varphi_{ij}(\bfv)$ for all $1 \leq i < j \leq n$.
\end{lemma}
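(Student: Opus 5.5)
The plan is to work entirely with the combinatorial characterization in Proposition~\ref{pro:Sij}: $\bfu\equiv_{\mathsf m}\bfv$ if and only if $\ev(\bfu)=\ev(\bfv)$, $\ip(\bfu)=\ip(\bfv)$ and $\fp(\bfu)=\fp(\bfv)$. The key observation is that $\varphi_{ij}(\bfw)$ is simply the $\equiv_{\mathsf m}$-class of the word obtained from $\bfw$ by deleting every letter other than $i$ and $j$ and then renaming $i\mapsto 1$, $j\mapsto 2$. Writing $\bfw[i,j]$ for this restriction, we therefore have $\varphi_{ij}(\bfu)=\varphi_{ij}(\bfv)$ if and only if $\bfu[i,j]\equiv_{\mathsf m}\bfv[i,j]$. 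This renaming is order-preserving, so it is harmless.

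For the forward direction, it suffices to show that deleting letters preserves $\equiv_{\mathsf m}$. This can be seen in two ways. One is that $\varphi_{ij}$ is a homomorphism from $\mathcal{A}_n^{\star}$, which is already asserted in the text, and it factors through $\equiv_{\mathsf m}$ because each defining relation in $\mathcal{R}_{\s_\infty}$ is mapped to a relation or a trivial equality. The other is directly via Proposition~\ref{pro:Sij}: $\ev(\bfu[i,j])$ is the restriction of $\ev(\bfu)$, and $\ip(\bfu[i,j])=\ip(\bfu)[i,j]$ and $\fp(\bfu[i,j])=\fp(\bfu)[i,j]$. These last equalities hold because the first and last occurrences of $i$ and $j$ in $\bfu$ remain the first and last occurrences after deletion, and their relative order is unchanged.

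For the converse, assume $\bfu[i,j]\equiv_{\mathsf m}\bfv[i,j]$ for all $i<j$.
\begin{enumerate}
\item[(a)] The evaluations agree coordinatewise, since each letter $k$ lies in some pair; here $n\ge 2$ is used. Hence $\ev(\bfu)=\ev(\bfv)$ and $\con(\bfu)=\con(\bfv)$.
\item[(b)] $\ip(\bfu)$ and $\ip(\bfv)$ are both words in which each letter of the common content appears exactly once, so each is determined by the relative order of first occurrences for every pair of letters. For each pair $\{i,j\}$ of letters in the content, this relative order is read off from $\ip(\bfu[i,j])=\ip(\bfu)[i,j]$, which equals $\ip(\bfv)[i,j]$. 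A linear order on a finite set is determined by its restrictions to all 2-element subsets, so $\ip(\bfu)=\ip(\bfv)$.
\item[(c)] The same argument gives $\fp(\bfu)=\fp(\bfv)$.
\end{enumerate}
Proposition~\ref{pro:Sij} then yields $\bfu\equiv_{\mathsf m}\bfv$.

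The only point needing care is the identity $\ip(\bfw[i,j])=\ip(\bfw)[i,j]$, together with its $\fp$ analogue. If one letter, say $j$, is absent from $\bfw$, the restriction is effectively a single-letter condition, and this is consistent with (a). I expect no real obstacle beyond stating these compatibility facts cleanly.
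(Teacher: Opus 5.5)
Your proof is correct and follows the same approach as the paper: both reduce the lemma to Proposition~\ref{pro:Sij} and note that $\varphi_{ij}$ records exactly the evaluations of $i$ and $j$ and the relative order of their first and last occurrences. Your converse argument spells out what the paper only calls ``symmetric'': when $n\geq 2$ every letter lies in some pair, and $\ip$, $\fp$ are determined by their restrictions to pairs of letters.
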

	
\begin{proof}
To prove the necessity, it suffices to show that for any $i,j$ with $1 \leq i < j \leq n$, if $\bfu\equiv_{\mathsf{m}}\bfv$, then $\varphi_{ij}(\bfu)\equiv_{\mathsf{m}}\varphi_{ij}(\bfv)$.
If $\bfu\equiv_{\mathsf{m}}\bfv$, then $\ev(\bfu)=\ev(\bfv), \ip(\bfu)=\ip(\bfv)$ and $\fp(\bfu)=\fp(\bfv)$ by Proposition \ref{pro:Sij}.  In particular, the number of occurrences of $i$ and $j$ in $\bfu$ is the same as that of $\bfv$, respectively; and the order of $i$ and $j$ occurring  in $\bfu$ is the same as that of $\bfv$. Note that
the occurrences of $i$ in $\bfu$ and $\bfv$ correspond to the  the occurrences of $1$ in $\varphi_{ij}(\bfu)$ and $\varphi_{ij}(\bfv)$, respectively;
and the occurrences of $j$ in $\bfu$ and $\bfv$ correspond to the  the occurrences of $2$ in $\varphi_{ij}(\bfu)$ and $\varphi_{ij}(\bfv)$, respectively.
Therefore  $\ev(\varphi_{ij}(\bfu))=\ev(\varphi_{ij}(\bfv))$, $\ip(\varphi_{ij}(\bfu))=\ip(\varphi_{ij}(\bfv))$ and $\fp(\varphi_{ij}(\bfu))=\fp(\varphi_{ij}(\bfu))$, that is $\varphi_{ij}(\bfu)\equiv_{\mathsf{m}}\varphi_{ij}(\bfv)$ by Proposition \ref{pro:Sij}.
The sufficiency can be obtained symmetrically.
\end{proof}


For each $n \geq 3$, let $I_n$ be the index set
\[
\left\{ (i,j): 1 \leq i < j \leq n \right\}.
\]
Now, consider the map
\[
\phi_n : (\sn, ^{\sigma_0}) \longrightarrow \prod\limits_{I_n} (\s_2, ^{\sigma_0}),
\]
whose $(i,j)$-th component is given by $\varphi_{ij}([\bfw]_{\mathsf{m}})$ for $\bfw \in \mathcal{A}_n^{\star}$ and $(i,j) \in I_n$.
		
\begin{lemma}
\label{lem:staln_embed_stal2}
The map $\phi_n$ is an embedding.
\end{lemma}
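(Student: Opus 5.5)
The map $\phi_n$ must be shown to be well defined, a monoid homomorphism, compatible with the involution $^{\sigma_0}$, and injective. For well-definedness, if $\bfu \equiv_{\mathsf{m}} \bfv$ then Lemma~\ref{lem:varphi_equiv_sn} (necessity) gives $\varphi_{ij}(\bfu)\equiv_{\mathsf{m}}\varphi_{ij}(\bfv)$ for every $(i,j)\in I_n$. Hence each component $\varphi_{ij}([\bfw]_{\mathsf{m}})$ depends only on the class $[\bfw]_{\mathsf{m}}$. Equivalently, $\varphi_{ij}$ is a homomorphism $\mathcal{A}_n^{\star}\to\s_2$ whose kernel contains $\equiv_{\mathsf{m}}$, so it factors through $\sn$. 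Then $\phi_n$ is a monoid homomorphism, because each component is one and products in $\prod_{I_n}\s_2$ are computed componentwise.

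Next I will check compatibility with the involution. It suffices to verify $\phi_n(k^{\sigma_0})=(\phi_n(k))^{\sigma_0}$ on generators $k\in\mathcal{A}_n$, since both $\bfw\mapsto\phi_n(\bfw^{\sigma_0})$ and $\bfw\mapsto(\phi_n(\bfw))^{\sigma_0}$ are anti-homomorphisms. Because $\sigma_0$ is the identity permutation, $k^{\sigma_0}=k$. Also $\varphi_{ij}(k)$ is one of $[1]_{\m},[2]_{\m},[\varepsilon]_{\m}$, each of which is fixed by $^{\sigma_0}$ on $\s_2$, since reversing a word of length at most one does nothing. Hence the components agree. For a general word $\bfw=a_1\cdots a_k$ this gives $\varphi_{ij}(\bfw^{\sigma_0})=\varphi_{ij}(a_k)\cdots\varphi_{ij}(a_1)=(\varphi_{ij}(\bfw))^{\sigma_0}$. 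This is exactly the remark in the paper that $\varphi_{ij}$ extends to a homomorphism of involution monoids.

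For injectivity, suppose $\phi_n([\bfu]_{\mathsf{m}})=\phi_n([\bfv]_{\mathsf{m}})$. Then $\varphi_{ij}(\bfu)\equiv_{\mathsf{m}}\varphi_{ij}(\bfv)$ for all $1\le i<j\le n$, and the sufficiency direction of Lemma~\ref{lem:varphi_equiv_sn} yields $\bfu\equiv_{\mathsf{m}}\bfv$.

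The only place with real content is that sufficiency direction, which the paper treats as symmetric. If I had to justify it, I would use Proposition~\ref{pro:Sij}. Equality of evaluations follows by reading off the counts of $1$ and $2$ in each $\varphi_{ij}$ image. Since $\ev(\bfu)=\ev(\bfv)$, the two words have the same content. Equality of $\ip$ (respectively $\fp$) follows because the ordering of first (respectively last) occurrences of every pair $i,j$ is recorded in $\ip(\varphi_{ij}(\cdot))$ (respectively $\fp(\varphi_{ij}(\cdot))$), and a linear order is determined by its pairwise comparisons. I expect this step to be the main obstacle, though it is elementary. Note also that the case where only one of $i,j$ occurs causes no trouble.
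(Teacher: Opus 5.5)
Your proposal is correct and is essentially the paper's proof: the two directions of Lemma~\ref{lem:varphi_equiv_sn} give that $\phi_n$ is a well-defined homomorphism and that it is injective. Beyond the paper, you check compatibility with $^{\sigma_0}$ on generators and justify the sufficiency direction via Proposition~\ref{pro:Sij}, both of which the paper dismisses as routine or symmetric.
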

	
\begin{proof}
It follows from the definition of $\phi_n$ and Lemma~\ref{lem:varphi_equiv_sn} that $\phi_n$ is a homomorphism and for any $\bfu, \bfv \in \mathcal{A}_n^{\star}$, $\bfu \equiv_{\mathsf{m}} \bfv$ if and only if $\phi_n([\bfu]_{\mathsf{m}}) = \phi_n([\bfv]_{\mathsf{m}})$. Therefore $\phi_n$ is an embedding.
\end{proof}

\begin{theorem}\label{thm:stal-id-stal2}
For any $n \geq 3$, $(\s_n, ^{\sigma_0})$ and $(\s_2, ^{\sigma_0})$ satisfy exactly the same identities.
\end{theorem}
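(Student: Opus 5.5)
The proof will use the standard variety-theoretic argument: show that each of $(\s_n,^{\sigma_0})$ and $(\s_2,^{\sigma_0})$ lies in the involution-semigroup variety generated by the other. Identities are preserved under taking involution subsemigroups, direct products and involution-preserving homomorphic images. So it suffices to show two facts: $(\s_2,^{\sigma_0})$ embeds into $(\s_n,^{\sigma_0})$, and $(\s_n,^{\sigma_0})$ embeds into a direct product of copies of $(\s_2,^{\sigma_0})$.

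For the first inclusion, take the submonoid of $\s_n$ generated by $[1]_{\m}$ and $[2]_{\m}$, that is, the classes of words over $\{1,2\}\subseteq\mathcal{A}_n$. Since $\sigma_0$ fixes $1$ and $2$, this submonoid is closed under $^{\sigma_0}$: the involution just reverses a word over $\{1,2\}$. By Proposition~\ref{pro:Sij}, two words over $\{1,2\}$ are $\equiv_{\m}$-equivalent in $\mathcal{A}_n^\star$ exactly when they are equivalent in $\mathcal{A}_2^\star$, because the conditions $\ev$, $\ip$, $\fp$ depend only on the word itself. Hence the natural map $\s_2\to\s_n$ is an injective homomorphism of involution monoids. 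Consequently every identity of $(\s_n,^{\sigma_0})$ holds in $(\s_2,^{\sigma_0})$.

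For the converse inclusion, I invoke Lemma~\ref{lem:staln_embed_stal2}. The only point that needs checking is that $\phi_n$ respects the involution. For this it suffices that each $\varphi_{ij}$ does: $\varphi_{ij}(\bfw^{\sigma_0})=(\varphi_{ij}(\bfw))^{\sigma_0}$. This holds because $^{\sigma_0}$ reverses words on both sides, and $\varphi_{ij}$ acts letterwise while mapping letters to letters or to the empty word. The text preceding Lemma~\ref{lem:varphi_equiv_sn} already asserts that $\varphi_{ij}$ extends to a homomorphism of involution monoids, so I will just verify this on generators. Then $(\s_n,^{\sigma_0})$ is isomorphic to an involution subsemigroup of $\prod_{I_n}(\s_2,^{\sigma_0})$. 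Since a direct product satisfies every identity of its factors, and substructures inherit identities, every identity of $(\s_2,^{\sigma_0})$ holds in $(\s_n,^{\sigma_0})$.

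I expect no real obstacle. The step needing the most care is the compatibility of $\phi_n$ with $^{\sigma_0}$, because Lemma~\ref{lem:staln_embed_stal2} is stated as an embedding without explicitly mentioning the unary operation. I will spell out that check on generators $k\in\mathcal{A}_n$, where $k^{\sigma_0}=k$ and $\varphi_{ij}(k)^{\sigma_0}=\varphi_{ij}(k)$ for single letters and for $\varepsilon$.
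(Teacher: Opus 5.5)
Your proof is correct and matches the paper's: the embedding $\phi_n$ of Lemma~\ref{lem:staln_embed_stal2} places $(\s_n,^{\sigma_0})$ in the variety generated by $(\s_2,^{\sigma_0})$, and the involution submonoid on the letters $1,2$ gives the reverse inclusion. Your extra checks are the details the paper leaves implicit: compatibility of each $\varphi_{ij}$ with $^{\sigma_0}$ on generators, and the fact, via Proposition~\ref{pro:Sij}, that $\equiv_{\m}$ restricted to words over $\{1,2\}$ is the same in $\mathcal{A}_n^{\star}$ and $\mathcal{A}_2^{\star}$.
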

\begin{proof}
By Lemma \ref{lem:staln_embed_stal2}, for any $n \geq 3$, $(\sn, ^{\sigma_0})$ can be embedded into a direct product of $\binom{n}{2}$ copies of $(\s_2, ^{\sigma_0})$.
Thus  $(\sn, ^{\sigma_0})$ is in the variety generated by $(\s_2, ^{\sigma_0})$. Since $(\s_2, ^{\sigma_0})$ is a submonoid of $(\sn, ^{\sigma_0})$ for any $n \geq 3$, $(\s_2, ^{\sigma_0})$ is in the variety generated by $(\sn, ^{\sigma_0})$.
Therefore they satisfy exactly the same identities.
\end{proof}

\subsection{Characterizations of identities and identity basis}
For convenience, denote by $_{i}x$ the $i$-th from the left occurrence of $x$ in $\bfu$, $_{\infty}x$ the last occurrence of $x$ in $\bfu$, and $_{i}x \prec_{\bfu} {_{j}y}$ if and only if the $_{i}x$ precedes the $_{j}y$ in $\bfu$.

\begin{theorem}\label{lem:inv1ids2}
An identity $\bfu\approx \bfv$ holds in $(\s_2,^{\sigma_0})$ if and only if
\begin{enumerate}[\rm(i)]
  \item $\occ(x, \overline{\bfu})= \occ(x, \overline{\bfv})$ for any $x\in \con(\overline{\bfu\bfv})$.
  \item  for any $x,y \in \con(\bfu\bfv)$ with $x, x^*\neq y$, the first [resp. last] variable of $\bfu[x,y]$ is the same as that of $\bfv[x,y]$.
\end{enumerate}
\end{theorem}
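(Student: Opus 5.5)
The plan is to reduce everything to Proposition~\ref{pro:Sij}. An involution word $\bfu$, evaluated under a substitution $\varphi:\mathcal{X}\to\s_2$, gives an element of $\s_2$. Under $^{\sigma_0}$, $\varphi(x^*)$ is represented by the reversal of a word representing $\varphi(x)$. Hence $(\s_2,^{\sigma_0})$ satisfies $\bfu\approx\bfv$ if and only if, for every $\varphi$, the words $\varphi(\bfu)$ and $\varphi(\bfv)$ have the same evaluation, initial part and final part. Over the alphabet $\{1,2\}$, the initial part is one of $\varepsilon,1,2,12,21$, so it only records which letter occurs first; the same holds for the final part. Throughout I read the condition ``$x,x^*\neq y$'' in (ii) as saying that $\overline{x}$ and $\overline{y}$ are distinct.

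\emph{Necessity.} For (i), I will substitute $x\mapsto[1]_{\m}$ and every other variable $\mapsto[\varepsilon]_{\m}$ (Remark~\ref{rem:delete}). Both $x$ and $x^*$ then map to $1$, so equality of evaluations gives $\occ(x,\overline{\bfu})=\occ(x,\overline{\bfv})$. For (ii), fix $x,y$ with distinct plain projections and kill all other variables. I proceed in two steps.
\begin{itemize}
\item Substitute $x\mapsto 1$, $y\mapsto 2$. Then $\ip$ of the image records whether the first letter of $\bfu[x,y]$ is an $x$-letter or a $y$-letter, and this must agree with $\bfv[x,y]$.
\item If the first letter is an $x$-letter, substitute $x\mapsto 12$, $y\mapsto 2$. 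The image starts with $\varphi(x)=12$ or $\varphi(x^*)=21$, so $\ip$ of the image distinguishes $x$ from $x^*$. The case of a $y$-letter is symmetric.
\end{itemize}
The statement for last letters follows in the same way using $\fp$.

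\emph{Sufficiency.} Assume (i) and (ii), and fix $\varphi$. Since $\ev(\varphi(x^*))=\ev(\varphi(x))$, condition (i) gives $\ev(\varphi(\bfu))=\ev(\varphi(\bfv))$. For the initial part, let $z$ be the first letter of $\bfu$ (possibly starred) whose image is non-empty.
\begin{itemize}
\item If $\varphi(z)$ contains a single letter $a$, then $\ip(\varphi(\bfu))$ is $a$, followed by the other letter if it occurs at all. This is determined by $a$ and the evaluation.
\item If $\varphi(z)$ contains both letters, then $\ip(\varphi(\bfu))=\ip(\varphi(z))$.
\end{itemize}
So it suffices to show that the first letter of $\bfv$ with non-empty image is also $z$. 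Applying (ii) to $\overline z$ and each other variable with non-empty image shows that the first such plain variable is the same in $\bfu$ and $\bfv$, and that its star status also agrees. The final part is handled symmetrically.

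The step I expect to be the main obstacle is the degenerate situation in the sufficiency argument where $\overline z$ is the only variable with non-empty image. There (ii) cannot be applied to a pair among the non-trivially substituted variables. I would instead apply (ii) with some auxiliary $y\in\con(\bfu\bfv)$, which yields agreement of the first letters of $\bfu[\overline z]$ and $\bfv[\overline z]$. If $\con(\overline{\bfu\bfv})$ is a single variable, condition (ii) is vacuous, and the theorem then depends on how the statement is read. Either (ii) is taken to include $x=y$, so that it also requires the first and last letters of $\bfu[x]$ and $\bfv[x]$ to agree, or such identities need a separate check. I would settle this convention first and then run the argument above.
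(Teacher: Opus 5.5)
You follow the paper's route. Necessity uses the substitutions $x\mapsto[1]_{\m}$; $x\mapsto[1]_{\m},y\mapsto[2]_{\m}$; and $x\mapsto[12]_{\m}$ (the paper sends $y\mapsto[\varepsilon]_{\m}$ in the last one, which changes nothing). Sufficiency shows that the first letter of $\bfu$ with non-empty image is also the first such letter of $\bfv$, so $\ip$, and dually $\fp$, agree.

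The genuine problem is the degenerate case you singled out, and your proposed repair fails there. Applying (ii) to $\overline z$ and an auxiliary $y$ with empty image says nothing about the first $\overline z$-letter whenever $\bfu[\overline z,y]$ begins with a $y$-letter. Concretely, let $\bfu=yxx^*y$ and $\bfv=yx^*xy$. Condition (i) holds. For every pair allowed by your reading ($\overline x\neq\overline y$), the projections are $\bfu$ and $\bfv$ themselves, which both begin and end with $y$, so (ii) holds. Yet $y\mapsto[\varepsilon]_{\m}$, $x\mapsto[12]_{\m}$ yields $[1221]_{\m}$ and $[2112]_{\m}$, whose initial parts are $12$ and $21$. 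So under that reading the statement is false for identities with several variables, not only for one-variable identities like $xx^*\approx x^*x$. No argument can close this step.

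What is needed is to read (ii) as also covering single-variable projections: the first [resp.\ last] letters of $\bfu[x]$ and $\bfv[x]$ must agree for every $x$. This is how the paper itself uses (ii) in the proof of Theorem~\ref{thm:FBS2inv1} (``$\bfu[b]$ starts with $b$ but $\bfv[b]$ does not, which contradicts (ii)''). With this clause, its necessity follows from the substitution $x\mapsto[12]_{\m}$ with every other variable sent to $[\varepsilon]_{\m}$. The degenerate case of sufficiency is then settled directly by the clause, with no auxiliary variable, and the rest of your argument goes through unchanged.

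The paper's own sufficiency proof has the same gap as yours. Its step ``it follows from (ii) that $\bfv=z\bfv_1$'' is taken after discarding all variables with empty image, and it fails exactly when only $\overline z$ survives.
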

\begin{proof}
Let $\bfu\approx \bfv$ be any identity of $(\s_2,^{\sigma_0})$. Suppose that there exists some $x\in \con(\overline{\bfu\bfv})$ such that $\occ(x, \overline{\bfu})\neq \occ(x, \overline{\bfv})$. Let $\theta_1$ be the substitution that maps $x$ to $[1]_\mathsf{m}$ and any other variable to $[\varepsilon]_\mathsf{m}$. Then $\theta_1(\bfu)=[1^{\occ(x, \overline{\bfu})}]_\mathsf{m}\neq [1^{\occ(x, \overline{\bfv})}]_\mathsf{m}=\theta_1(\bfv)$, a contradiction. Therefore the condition (i) holds.

Suppose that there exist $x,y \in \con(\bfu\bfv)$ with $x, x^*\neq y$ such that $\bfu[x,y]$ starts with $x$ but $\bfv[x,y]$ does not. If $\bfv[x,y]$ starts with $x^*$, then let $\theta_2$ be the substitution such that $x\mapsto [12]_\mathsf{m}, y\mapsto [\varepsilon]_\mathsf{m}$, we obtain
\begin{align*}
\theta_2(\bfu[x,y])=[12]_\mathsf{m}\cdot [\bfs]_\mathsf{m}\neq [21]_\mathsf{m}\cdot [\bft]_\mathsf{m}=\theta_2(\bfv[x,y])
\end{align*}
where $\bfs, \bft \in \mathcal{A}_2^{\star}$, a contradiction; if $\bfv[x,y]$ starts with $y$, then let $\theta_3$ be the substitution such that $x\mapsto [1]_\mathsf{m}, y\mapsto [2]_\mathsf{m}$, we obtain
\begin{align*}
\theta_3(\bfu[x,y])=[1]_\mathsf{m}\cdot [\bfs]_\mathsf{m}\neq [2]_\mathsf{m}\cdot [\bft]_\mathsf{m}=\theta_3(\bfv[x,y])
\end{align*}
where $\bfs, \bft \in \mathcal{A}_2^{\star}$, a contradiction. Thus $\bfu[x,y]$ and $\bfv[x,y]$ start with the same variable. Symmetrically, $\bfu[x,y]$ and $\bfv[x,y]$ end with the same variable. Therefore the condition (ii) holds.

Conversely, let $\bfu\approx \bfv$ be any identity satisfying (i) and (ii) and $\phi$ be any substitution from $\mathcal{X}$ to $\s_2$.
It follows from (i) and the definition of the involution $^{\sigma_0}$ that $\ev(\phi(\bfu))=\ev(\phi(\bfv))$. If $\con(\phi(\bfu))=\{1\}$ or $\{2\}$, then $\phi(\bfu)=\phi(\bfv)$ by (i) and the definition of $^{\sigma_0}$.
If $\con(\phi(\bfu))=\{1,2 \}$, to show $\phi(\bfu)=\phi(\bfv)$, it suffices to show that $\ip(\phi(\bfu))=\ip(\phi(\bfv))$ and $\fp(\phi(\bfu))=\fp(\phi(\bfv))$. By symmetry, we only need to show that $\ip(\phi(\bfu))=\ip(\phi(\bfv))$.

If ${_11}\prec_{\phi(\bfu)} {_12}$, assume that $\phi(x)\neq [\varepsilon]_\mathsf{m}$ for any $x\in \con(\bfu)$ by Remark \ref{rem:delete}, then $\bfu$ can be written into the form $\bfu = z\bfu_1$ satisfying $\phi(z)$ starts with $1$.  It follows from (ii) that $\bfv = z\bfv_1$.
Thus ${_11}\prec_{\phi(\bfu)} {_12}$  implies ${_11}\prec_{\phi(\bfv)} {_12}$, and by symmetry we have that ${_11}\prec_{\phi(\bfu)} {_12}$  if and only if ${_11}\prec_{\phi(\bfv)} {_12}$.
 Therefore  $\ip(\phi(\bfu))=\ip(\phi(\bfv))$.
\end{proof}

An immediate consequence of Theorem \ref{lem:inv1ids2} is the following.

\begin{corollary}
The decision problem ${\textsc{Check-Id}}(\sn,^{\sigma_0})$ for each finite $n$
belongs to the complexity class $\mathsf{P}$.
\end{corollary}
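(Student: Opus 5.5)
The plan is to reduce the decision problem for each finite $n$ to the two combinatorial conditions of Theorem~\ref{lem:inv1ids2}, and then check that these conditions can be tested in polynomial time in the length of the input identity. For $n=1$, $\s_1$ is the free monogenic monoid, and $^{\sigma_0}$ is the identity map on it. So $\bfu\approx\bfv$ holds in $(\s_1,^{\sigma_0})$ exactly when $|\bfu|=|\bfv|$, which is trivially checkable in linear time. For $n\geq 2$, Theorem~\ref{thm:stal-id-stal2} (and trivially $n=2$ itself) shows that $(\sn,^{\sigma_0})$ and $(\s_2,^{\sigma_0})$ satisfy the same identities. Hence $\textsc{Check-Id}(\sn,^{\sigma_0})$ coincides with $\textsc{Check-Id}(\s_2,^{\sigma_0})$, and by Theorem~\ref{lem:inv1ids2} an instance $\bfu\approx\bfv$ is a YES-instance if and only if conditions (i) and (ii) hold.

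Let $m=|\bfu|+|\bfv|$. For condition (i), compute the plain projections $\overline{\bfu},\overline{\bfv}$ in time $O(m)$. Then, for each of the at most $m$ variables $x\in\con(\overline{\bfu\bfv})$, count $\occ(x,\overline{\bfu})$ and $\occ(x,\overline{\bfv})$. This takes time $O(m^2)$ in total.

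For condition (ii), range over all pairs $x,y\in\con(\bfu\bfv)$ with $y\notin\{x,x^*\}$. There are $O(m^2)$ such pairs. For each pair, form $\bfu[x,y]$ and $\bfv[x,y]$ by a single scan retaining only $x,x^*,y,y^*$, then compare their first letters and their last letters. Each pair costs $O(m)$, so the total is $O(m^3)$. The algorithm answers YES exactly when every test succeeds, and the overall running time is polynomial in $m$, which places the problem in $\mathsf{P}$.

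I do not expect a genuine obstacle here, since all the mathematical content is in Theorem~\ref{lem:inv1ids2}. The only point needing care is the reading of condition (ii). The notation $\bfu[x,y]$ is defined only when $\overline{x}\neq\overline{y}$, and the restriction $x,x^*\neq y$ ensures exactly that. The notation retains both $x$ and $x^*$ (and both $y$ and $y^*$), and "the first variable" must be compared as a letter of $\mathcal{X}\cup\mathcal{X}^*$, so that $x$ and $x^*$ count as distinct letters. If $\bfu[x,y]$ is empty while $\bfv[x,y]$ is not, the test should report failure. This case is in any event already excluded by condition (i).
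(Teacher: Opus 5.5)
For $n\geq 2$ your argument is the paper's: pass to $(\s_2,^{\sigma_0})$ via Theorem~\ref{thm:stal-id-stal2} and test conditions (i) and (ii) of Theorem~\ref{lem:inv1ids2} by brute force. You are right that $n=1$ needs separate treatment, and the paper silently skips it: $xy\approx yx$ holds in the commutative monoid $\s_1$ but violates (ii). However, your criterion $|\bfu|=|\bfv|$ for $n=1$ is wrong. Since $\s_1$ is free monogenic with trivial involution, a substitution sends both $x$ and $x^*$ to the same element $[1^{a_x}]_{\m}$, with $a_x\geq 0$ arbitrary. Hence $\bfu\approx\bfv$ holds in $(\s_1,^{\sigma_0})$ if and only if $\occ(x,\overline{\bfu})=\occ(x,\overline{\bfv})$ for every $x\in\mathcal{X}$, which is exactly condition (i). For instance, $xy\approx x^2$ has sides of equal length but fails under $x\mapsto[1]_{\m}$, $y\mapsto[\varepsilon]_{\m}$. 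With the corrected criterion, $n=1$ is decided by the $O(m^2)$ test for (i) that you already describe, so the conclusion survives.

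Second, the literal reading of (ii) that you single out as the delicate point makes the test unsound. If only pairs with $\overline{x}\neq\overline{y}$ are tested, your algorithm accepts $xx^*\approx x^*x$, because (i) holds and (ii) is vacuous. But $x\mapsto[12]_{\m}$ sends the two sides to $[1221]_{\m}$ and $[2112]_{\m}$, whose initial parts $12$ and $21$ differ, so the identity fails in $(\s_2,^{\sigma_0})$. The same happens for $yxx^*y\approx yx^*xy$, where (ii) is not vacuous. The paper's own arguments in fact use (ii) for single-variable projections as well. The necessity part of Theorem~\ref{lem:inv1ids2} sends $y\mapsto[\varepsilon]_{\m}$ to separate a leading $x$ from a leading $x^*$. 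The proof of Theorem~\ref{thm:FBS2inv1} invokes (ii) for $\bfu[b]$ versus $\bfv[b]$.

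So you should also compare the first and last letters of $\bfu[x]$ and $\bfv[x]$ for each $x\in\mathcal{X}$. This costs only $O(m^2)$ more, and with it sufficiency is sound. After deleting the variables sent to $[\varepsilon]_{\m}$, the pair and single-variable conditions force the two reduced words to begin with the same letter and end with the same letter. That determines $\ip$ and $\fp$ in $\s_2$. The paper's proof of the corollary uses the same literal phrasing, so this defect comes from the statement of Theorem~\ref{lem:inv1ids2} rather than from you.
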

\begin{proof}
For ${\textsc{Check-Id}}(\sn,^{\sigma_0})$, given any identity $\bfu\approx \bfv$, it suffices to show that one can check whether the words $\bfu$ and  $\bfv$ satisfy conditions of Theorem \ref{lem:inv1ids2} in polynomial time.  To check whether $\occ(x, \overline{\bfu})= \occ(x, \overline{\bfv})$ holds for any $x\in \con(\overline{\bfu\bfv})$, it suffices to check whether $\con(\overline{\bfu}) =\con(\overline{\bfv})$ and $\occ(x, \overline{\bfu})=  \occ(x, \overline{\bfv})$ for any $x\in \con(\overline{\bfu\bfv})$. To check whether condition (ii) of Theorem \ref{lem:inv1ids2} holds, it suffices to check whether the first [resp. last] variable in $\bfu[x, y]$ is the same as that of $\bfv[x, y]$ for any $x, x^*\neq y \in \con(\bfu\bfv)$. Clearly, this can be done in polynomial time.
\end{proof}

In the end of this section, we prove that $(\s_2,^{\sigma_0})$ is finitely based by giving a finite identity basis.

\begin{theorem}\label{thm:FBS2inv1}
The variety generated by  $(\s_2,^{\sigma_0})$ is defined by the  identities \eqref{id: inv} and
\begin{gather}
x^{\circledast_1}hx^*kx^{\circledast_2}\approx x^{\circledast_1}hxkx^{\circledast_2}, \label{id:xhxtx**}\\
x^{\circledast_1}hxykx^{\circledast_2}\approx x^{\circledast_1}hyxkx^{\circledast_2}, \label{id:xhxtx*} \\
x^{\circledast_1}hyxky^{\circledast_2} \approx x^{\circledast_1}hxyky^{\circledast_2}, \label{id:xhxyty*}
\end{gather}
where $\circledast_1, \circledast_2\in\{1, *\}$.
\end{theorem}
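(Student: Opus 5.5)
The proof has two parts. First, $(\s_2,^{\sigma_0})$ satisfies the listed identities. Second, every identity of $(\s_2,^{\sigma_0})$ is derivable from them.

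\textbf{Soundness.} Each of \eqref{id:xhxtx**}, \eqref{id:xhxtx*} and \eqref{id:xhxyty*} can be checked against conditions (i) and (ii) of Theorem~\ref{lem:inv1ids2}.

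For (i), the plain projections of the two sides coincide up to order, so occurrence counts agree.

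For (ii), take any pair $z,w$ of variables with $z,z^*\neq w$. We must see that $\bfu[z,w]$ and $\bfv[z,w]$ have the same first and last letters.
\begin{itemize}
\item In every identity the two sides differ only strictly inside a factor flanked by an occurrence of $x$ (or $x^*$) on the left and an occurrence of $x$, $x^*$, $y$ or $y^*$ on the right.
\item For \eqref{id:xhxtx**} and \eqref{id:xhxtx*}, any pair involving $x$ has its first and last letters fixed by the outer $x^{\circledast_1}$ and $x^{\circledast_2}$.
\item A pair not involving $x$ (the pair $\{y,h\}$, say) is unchanged by the swap, or the swapped letter lies strictly inside.
\item For \eqref{id:xhxyty*}, the swap is of $xy$ with $yx$. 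The restriction to $\{x,y\}$ starts with $x^{\circledast_1}$ and ends with $y^{\circledast_2}$ on both sides.
\item Restrictions to $\{x,z\}$ or $\{y,z\}$ with $z$ another variable are unchanged, since only the relative order of $x$ and $y$ changes.
\end{itemize}
Hence all these identities hold.

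\textbf{Completeness.} Let $\bfu\approx\bfv$ satisfy (i) and (ii). Using the basis, we reduce both sides to a canonical form determined by the data in (i) and (ii).

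Fix a word $\bfu$. For each variable $x$ with $\occ(x,\overline{\bfu})\ge 2$, identity \eqref{id:xhxtx**} lets us freely change the star of every \emph{middle} occurrence of $x$, meaning any occurrence other than the first and the last. We normalize all middle occurrences to be plain.

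Identities \eqref{id:xhxtx*} and \eqref{id:xhxyty*} (with the $\circledast$'s) are the involutive analogues of \eqref{id:xhxyty}. Together they say:
\begin{itemize}
\item a middle occurrence of $x$ commutes with any adjacent letter;
\item two adjacent letters may be swapped whenever at least one of them is a middle occurrence, or more precisely whenever the swap keeps all first and last occurrences in place relative to each other.
\end{itemize}

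Using these moves we bring $\bfu$ to the canonical form
\[
\bfu \approx e_1 \mathbf{m}_1 e_2 \mathbf{m}_2\cdots e_r\mathbf{m}_r,
\]
where:
\begin{itemize}
\item $e_1e_2\cdots e_r$ is the subsequence of first and last occurrences, with their stars (a variable occurring once counts as both first and last);
\item each $\mathbf{m}_j$ is a plain word collecting middle occurrences, pushed as far left as possible and sorted in a fixed order.
\end{itemize}
The move "push as far left as possible" is legitimate because a middle occurrence of $x$ lies after the first occurrence of $x$. So $x$ can move left past letters until it meets ${_1x}$, which is itself a first-occurrence letter, or until it meets a letter it cannot cross. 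This mirrors the argument of Aird and Ribeiro \cite{AR24} for \eqref{id:xhxyty}.

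Next we show that conditions (i) and (ii) force $\bfu$ and $\bfv$ to have identical canonical forms.
\begin{itemize}
\item Condition (ii), applied to every pair, determines the relative order of the first occurrences of any two distinct plain variables, together with the star on the first letter. The same holds for last occurrences.
\item We must also recover the relative order of ${_1x}$ and ${_\infty y}$. This is the main obstacle.
\end{itemize}

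First, this order may not actually be an invariant. For example, $xyxy$ and $xxyy$ both satisfy (ii) relative to $x,y$. Indeed \eqref{id:xhxyty*} makes such orders changeable whenever the last $x$ and the first $y$ are not adjacent in the sense of pinned letters.

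So I would refine the canonical form. Move first occurrences leftward and last occurrences rightward as far as the identities allow, so that the skeleton becomes: all first occurrences, in their order, then the middle stuff, then all last occurrences. Here:
\begin{itemize}
\item a variable occurring once stays pinned;
\item a variable occurring exactly twice has no middle, and its first and last occurrences are handled by \eqref{id:xhxyty*}.
\end{itemize}
I would verify that the reachable skeleton is determined exactly by (i) and (ii). The expected difficulty is the bookkeeping for variables with one or two occurrences, and for the stars on first and last letters. Condition (ii) records those stars only through pairs, so a variable with a single occurrence must be handled using its pairs with other variables, or on its own via (i) when it is the only variable.

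Once canonical forms coincide, $\bfu\approx\bfv$ follows from \eqref{id: inv} and the three identities.
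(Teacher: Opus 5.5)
\textbf{Gap: completeness is not proved.} Your soundness check is fine. Your description of the admissible moves is also essentially right: an adjacent pair can be swapped by \eqref{id:xhxtx*} or \eqref{id:xhxyty*} exactly when the swap does not interchange two first occurrences or two last occurrences. The problem is the completeness half. Its entire content is that two words with the same invariants are joined by such moves, and you explicitly defer this (``I would verify that the reachable skeleton is determined exactly by (i) and (ii)''). Your first normal form is not canonical, as your own example $xyxy$, $xxyy$ shows. The refined one (all first occurrences, then middles, then all last occurrences) is not attainable in general. A variable occurring once lies on both the first-occurrence chain and the last-occurrence chain, so in $xxw$ the last $x$ is forced before the first (and only) $w$. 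Calling such letters ``pinned'' does not define a normal form, let alone show that equivalent words reach the same one.

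There is also a slip about lone variables. Condition (i) records only plain counts, so it cannot settle a lone variable: $x\approx x^*$ satisfies (i), and satisfies the two-variable condition (ii) vacuously, yet it fails under $x\mapsto[12]_{\m}$. You need the one-variable version of (ii): the first and last letters of $\bfu[x]$ and $\bfv[x]$ agree. This is what the proof of Theorem~\ref{lem:inv1ids2} actually establishes and what the paper relies on.

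\textbf{How the paper closes the gap.} It supplies the missing connectivity step without any normal form. After making the identity balanced with \eqref{id:xhxtx**}, write $\bfu=\bfu'a\bfw$ and $\bfv=\bfv'b\bfw$ with $a\neq b$. Slide the last $b$ of $\bfu'a$ rightwards, one letter $c$ at a time, until it stands just before $\bfw$. Every step is admissible, because the current word $\bfu''$ still gives an identity $\bfu''\approx\bfv$ of $(\s_2,^{\sigma_0})$:
\begin{itemize}
\item If $b$ and $c$ were both first occurrences in $\bfu''$, then $b$ occurs only once in $\bfu'a$. So the only $b$ in $\bfv'b$ is its last letter, which is preceded by an occurrence of $c$. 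Hence $\bfv[b,c]$ does not start with $b$, although $\bfu''[b,c]$ does.
\item If both were last occurrences, $\bfu''[b,c]$ ends with $c$ while $\bfv[b,c]$ ends with $b$.
\item The case $c=b^*$ is the same argument applied to $\bfu''[b]$ and $\bfv[b]$.
\end{itemize}
The two sides then share a longer suffix, and induction on $|\bfu'|$ concludes. Equivalently, the admissible words are the linear extensions of the poset generated by the first-occurrence chain, the last-occurrence chain, and ``first $<$ middle $<$ last'' for each variable. Linear extensions are connected under adjacent transpositions of incomparable elements.

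Your normal-form route can also be rescued. Single-occurrence letters cut the word into segments that first and last occurrences cannot leave, and inside a segment firsts can be moved before lasts. But you would still have to prove that (ii) determines which segment contains each first or last occurrence, and as written that is exactly the part that is missing.
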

\begin{proof}
Clearly, $(\s_2,^{\sigma_0})$ satisfies the identities \eqref{id:xhxtx**}--\eqref{id:xhxyty*} by Theorem~\ref{lem:inv1ids2}.
It suffices to show that each non-trivial identity satisfied by $(\s_2,^{\sigma_0})$ can be deduced from \eqref{id:xhxtx**}--\eqref{id:xhxyty*}. By the identity \eqref{id:xhxtx**}, any non-balanced identity of $(\s_2,^{\sigma_0})$ can be converted into some balanced identity. It suffices to show that each non-trivial balanced identity satisfied by $(\s_2,^{\sigma_0})$ can be deduced from \eqref{id:xhxtx*} and \eqref{id:xhxyty*}.
Let $\Sigma$ be the set of all non-trivial balanced identities satisfied by $(\s_2,^{\sigma_0})$ but  not  deducible from \eqref{id:xhxtx*} and \eqref{id:xhxyty*}. Suppose that $\Sigma\neq \emptyset$. Note that any balanced identity $\bfu\approx \bfv$ in $\Sigma$ can be written
uniquely in the form $\bfu' a\bfw \approx \bfv' b\bfw$ where $a,b$ are distinct variables and $|\bfu'|=|\bfv'|$. Choose an identity, say $\bfu\approx \bfv$, from $\Sigma$ such that when it is written as $\bfu' a\bfw\approx \bfv' b\bfw$,
the lengths of the words $\bfu'$ and $\bfv'$ are as short as possible. Since $\bfu\approx \bfv$ is balanced, we have $\con(\bfu'a)=\con(\bfv'b)$ and $\occ(x, \bfu'a)=\occ(x, \bfv'b)$ for any $x \in \con(\bfu'a)$.

Since $\con(\bfu'a)=\con(\bfv'b)$, we have $b\in \con(\bfu')$, and so $\bfu'a=\bfu_1 bc\bfu_2$ with $b\neq c$ and $b\not\in\con(\bfu_2)$.  Since $\con(\bfu'a)=\con(\bfv'b)$, we have $c\in \con(\bfv')$, and so $\bfv'=\bfv_1 c\bfv_2$ with $c\not\in\con(\bfv_2)$. Thus $\bfu=\bfu_1 bc\bfu_2\bfw$ and $\bfv=\bfv_1 c\bfv_2b\bfw$.
On the one hand, we claim that $b\in\con(\bfu_1)$ or $b^*\in\con(\bfu_1)$ holds when $b^*=c$ and that at least one of the four cases $b\in\con(\bfu_1)$, $c\in\con(\bfu_1)$, $b^*\in\con(\bfu_1)$ and $c^*\in\con(\bfu_1)$ is true when $b^*\neq c$. First, we consider the case $b^*=c$. If $b, b^*\not\in\con(\bfu_1)$, then $\occ(b, \bfu'a)=1$, and so $\bfu[b]$ starts with $b$ but $\bfv[b]$ does not, which contradicts (ii) of Theorem~\ref{lem:inv1ids2}. Next, we consider the case $b^*\neq c$. If $b, c, b^*, c^* \not\in\con(\bfu_1)$, then $\occ(b, \bfu'a)=1$, and so $\bfu[b,c]$ starts with $b$ but $\bfv[b,c]$ does not, which contradicts (ii) of Theorem~\ref{lem:inv1ids2}.
On the other hand,
we claim that $b\in\con(\bfu_2\bfw)$ or $b^*\in\con(\bfu_2\bfw)$ holds when $b^*=c$ and that at least one of the four cases $b\in\con(\bfu_2\bfw)$, $c\in\con(\bfu_2\bfw)$, $b^*\in\con(\bfu_2\bfw)$ and $c^*\in\con(\bfu_2\bfw)$ is true when $b^*\neq c$. First, we consider the case $b^*=c$. If $b, b^*\not\in\con(\bfu_2\bfw)$, then $\bfu[b]$ ends with $b^*$ but $\bfv[b]$ does not, which contradicts (ii) of Theorem~\ref{lem:inv1ids2}. Next, we consider the case $b^*\neq c$. If $b, c, b^*, c^*\not\in\con(\bfu_2\bfw)$, then $\bfu[b,c]$ ends with $c$ but $\bfv[b,c]$ does not, which contradicts (ii) of Theorem~\ref{lem:inv1ids2}.
Therefore, the
identities \eqref{id:xhxtx*} and \eqref{id:xhxyty*} can be used to convert $\bfu_1 bc\bfu_2 \bfw$ into $\bfu_1cb\bfu_2\bfw$. By repeating this process, the word $\bfu=\bfu'a\bfw=\bfu_1bc\bfu_2 \bfw$ can be converted into the word $\bfu_1c\bfu_2b \bfw$ by the
identities \eqref{id:xhxtx*} and \eqref{id:xhxyty*}.

Clearly the identities $\bfu_1c\bfu_2b\bfw \approx \bfu'a\bfw \approx \bfv'b\bfw$ hold in $(\s_2, ^{\sigma_0})$. Note that $|\bfu_1c\bfu_2b\bfw|=|\bfu'a\bfw|=|\bfv'b\bfw|$ and words in the identity $\bfu_1c\bfu_2b\bfw \approx \bfv'b\bfw$ have a longer common suffix than words in the identity $\bfu'a\bfw \approx \bfv'b\bfw$. Hence $\bfu_1c\bfu_2b\bfw \approx \bfv'b\bfw \notin \Sigma$ by the minimality assumption on the lengths of $\bfu'$ and $\bfv'$, that is, $\bfu_1c\bfu_2b\bfw \approx \bfv'b\bfw$ can be deducible from \eqref{id:xhxtx*} and \eqref{id:xhxyty*}. We have shown that $\bfu'a\bfw \approx \bfu_1c\bfu_2b\bfw$ can be deducible from \eqref{id:xhxtx*} and \eqref{id:xhxyty*}. Hence $\bfu'a\bfw \approx \bfv'b\bfw$ can be deducible from \eqref{id:xhxtx*} and \eqref{id:xhxyty*}, which contradicts $\bfu'a\bfw \approx \bfv'b\bfw \in \Sigma$. Therefore $\Sigma=\emptyset$.
\end{proof}

\section{Identities of $(\sn, ^{\sigma_1})$}\label{sec:4.2}%
In this section, we investigate the identities of $\sn$ with involution determined by the permutation that can be decomposed into one 2-cycle.
Recall that each permutation that can be decomposed into one 2-cycle can induce an involution on $\sn$.
Involution semigroups formed by $\sn$ under each of these involutions are isomorphic. For convenience, in this section, we only consider the involution $^{\sigma_1}$ determined by the following permutation $\sigma_1$:
$$\sigma_1(1)=n,  \sigma_1(n)=1, \text{and}\,\, \sigma_1(2)=2, \sigma_1(3)=3, \dots, \sigma_1(n-1)=n-1.$$
We prove that for all finite $n\geq 3$, $(\sn, ^{\sigma_1})$ satisfy the same identities. Further,
we give characterizations of the identities satisfied by $(\s_2, ^{\sigma_1})$ and $(\s_3, ^{\sigma_1})$; based on these results,
the finite basis problem and identity checking problem for them are solved.

\subsection{Embedding into a direct product of copies of $(\s_3, ^{\sigma_1})$}
For $n >  3$, we show that $(\sn,^{\sigma_1})$ can be embedded into a direct product of copies of $(\s_3,^{\sigma_1})$.
For any $i<j \in \mathcal{A}_n$ with $n>3$, it follows from the definition of $^{\sigma_1}$ that there are four cases about the order of $i,j, i^{\sigma_1}, j^{\sigma_1}$ in $\mathcal{A}_n$: $i^{\sigma_1}=j$,  $i<j=j^{\sigma_1}<i^{\sigma_1}$, $j^{\sigma_1}<i=i^{\sigma_1}<j$, $i^{\sigma_1}=i< j=j^{\sigma_1}$.
For any $i<j \in \mathcal{A}_n$ with $n>3$, define a map $\varphi_{ij}$ from $\mathcal{A}_n^{\star}$ to $\s_3$ which can be determined by the following three cases according to the order of $i, i^{\sigma_1}, j, j^{\sigma_1}$ in $\mathcal{A}_n$.

{\bf Case 1.} $i^{\sigma_1} = j$. Define a map $\lambda_{ij}: \mathcal{A}_n \rightarrow \s_3$ given by
\begin{align*}
k &\mapsto \begin{cases}
[1]_{\m}, & \text{if}\ k = i,\\
[3]_{\m}, & \text{if}\ k = j,\\
\left[\varepsilon\right]_{\m}, & \text{otherwise}.
\end{cases}
\end{align*}
Clearly, this map can be extended to a homomorphism $\lambda_{ij}: \mathcal{A}_n^{\star} \rightarrow \s_3$.
Further, $\lambda_{ij}$
is also a homomorphism from $(\mathcal{A}_n^{\star},^{\sigma_1})$ to $(\s_3,^{\sigma_1})$. This is because for any $k \in \mathcal{A}_n$, $\lambda_{ij}(k^{\sigma_1})=(\lambda_{ij}(k))^{\sigma_1}$ by
\begin{align*}
\begin{cases}
\lambda_{ij}(k^{\sigma_1})=[3]_{\m}=(\lambda_{ij}(k))^{\sigma_1}, & \text{if}\ k = i,\\
\lambda_{ij}(k^{\sigma_1})=[1]_{\m}=(\lambda_{ij}(k))^{\sigma_1},  & \text{if}\ k = j,\\
\lambda_{ij}(k^{\sigma_1})=[\varepsilon]_{\m}=(\lambda_{ij}(k))^{\sigma_1},   & \text{otherwise}.
\end{cases}
\end{align*}

{\bf Case 2.} $i<j=j^{\sigma_1}<i^{\sigma_1}$ or $j^{\sigma_1}<i=i^{\sigma_1}<j$. For convenience, let $i_1=i, i_2=j$ and $i_3=i^{\sigma_1}$ when $i<j=j^{\sigma_1}<i^{\sigma_1}$ and $i_1=j^{\sigma_1}, i_2=i$ and $i_3=j$ when $j^{\sigma_1}<i=i^{\sigma_1}<j$. Define a map $\theta_{ij}: \mathcal{A}_n \rightarrow \s_3$ by
\begin{align*}
k \mapsto \begin{cases}
[1]_{\m}, & \text{if}\ k = i_1,\\
[2]_{\m}, & \text{if}\ k = i_2,\\
[3]_{\m}, & \text{if}\ k = i_3,\\
\left[\varepsilon\right]_{\m}, & \text{otherwise.}
\end{cases}
\end{align*}
Clearly, $\theta_{ij}$ can be extended to a homomorphism from $\mathcal{A}_n^{\star}$ to $\s_3$.
Further, $\theta_{ij}$ is also a homomorphism from $(\mathcal{A}_n^{\star},^{\sigma_1})$ to $(\s_3,^{\sigma_1})$. This is because for any $k \in \mathcal{A}_n$, $\theta_{ij}(k^{\sigma_1})=(\theta_{ij}(k))^{\sigma_1}$ by
\begin{align*}
\begin{cases}
\theta_{ij}(k^{\sigma_1})=[3]_{\m}= (\theta_{ij}(k))^{\sigma_1}, & \text{if}\ k = i_1,\\
\theta_{ij}(k^{\sigma_1})=[2]_{\m}=(\theta_{ij}(k))^{\sigma_1},  & \text{if}\ k = i_2,\\
\theta_{ij}(k^{\sigma_1})=[1]_{\m}=(\theta_{ij}(k))^{\sigma_1},  & \text{if}\ k =i_3,\\
\theta_{ij}(k^{\sigma_1})=[\varepsilon]_{\m}=(\theta_{ij}(k))^{\sigma_1},  & \text{otherwise}.
\end{cases}
\end{align*}

{\bf Case 3.} $i^{\sigma_1}=i<j=j^{\sigma_1}$.  Define a map $\eta_{ij}: \mathcal{A}_n \rightarrow \s_3$ by
\begin{align*}
k \mapsto \begin{cases}
[13]_{\m}, & \text{if}\ k = i,\\
[31]_{\m}, & \text{if}\ k = j,\\
\left[\varepsilon\right]_{\m}, & \text{otherwise.}
\end{cases}
\end{align*}
Clearly, $\eta_{ij}$ can be extended to a homomorphism from $\mathcal{A}_n^{\star}$ to $\s_3$.
Further, $\eta_{ij}$ is also a homomorphism from $(\mathcal{A}_n^{\star},^{\sigma_1})$ to $(\s_3,^{\sigma_1})$. This is because for any $k \in \mathcal{A}_n$, $\eta_{ij}(k^{\sigma_1})=(\eta_{ij}(k))^{\sigma_1}$ by
\begin{align*}
\begin{cases}
\eta_{ij}(k^{\sigma_1})=[13]_{\m}= (\eta_{ij}(k))^{\sigma_1}, & \text{if}\ k = i,\\
\eta_{ij}(k^{\sigma_1})=[31]_{\m}=(\eta_{ij}(k))^{\sigma_1},  & \text{if}\ k =j,\\
\eta_{ij}(k^{\sigma_1})=[\varepsilon]_{\m}\,\,\,=(\eta_{ij}(k))^{\sigma_1},  & \text{otherwise}.
\end{cases}
\end{align*}
	
Now we can define the map $\varphi_{ij}: \mathcal{A}^{\star}_n \rightarrow \s_3$ by
\begin{align*}
\varphi_{ij}=
\left\{
\begin{array}{ll}
\lambda_{ij} & \hbox{if $i^{\sigma_1}=j$,} \\
[0.1cm]
\theta_{ij} & \hbox{if $i<j=j^{\sigma_1}<i^{\sigma_1}$ or $j^{\sigma_1}<i=i^{\sigma_1}<j$,} \\
[0.1cm]
\eta_{ij} & \hbox{if $i^{\sigma_1}=i<j=j^{\sigma_1}<i$,}
\end{array}
\right.
\end{align*}
where $\lambda_{ij}, \theta_{ij}, \eta_{ij}$ are defined as above. It follows from the previous analysis that $\varphi_{ij}$ is a homomorphism from $(\mathcal{A}_n^{\star},^{\sigma_1})$ to $(\s_3,^{\sigma_1})$. In fact, the following result holds.


\begin{lemma}
\label{lem:varphi_equiv_inv2_sn}
Let $\bfu, \bfv \in \mathcal{A}_n^{\star}$. Then $\bfu \equiv_{\m} \bfv$ if and only if $\varphi_{ij}(\bfu) \equiv_{\m} \varphi_{ij}(\bfv)$ for all $1 \leq i < j \leq n$.
\end{lemma}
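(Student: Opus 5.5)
We prove both directions through the characterization in Proposition \ref{pro:Sij}: $\bfu \equiv_{\m} \bfv$ if and only if $\ev(\bfu)=\ev(\bfv)$, $\ip(\bfu)=\ip(\bfv)$ and $\fp(\bfu)=\fp(\bfv)$.

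\emph{Necessity.} Each $\varphi_{ij}$ is a monoid homomorphism $\mathcal{A}_n^{\star}\to\s_3$. It therefore suffices to check that $\varphi_{ij}$ sends both sides of every defining relation in $\mathcal{R}_{\s_\infty}$ (restricted to $\mathcal{A}_n$) to the same element. We split by the definition of $\varphi_{ij}$.
\begin{enumerate}[\rm(a)]
\item For $\lambda_{ij}$ and $\theta_{ij}$, every letter is sent to a single letter or to $[\varepsilon]_{\m}$. The image of a relation $(a\bfu ab\bfv a, a\bfu ba\bfv a)$ or $(a\bfu ab\bfv b, a\bfu ba\bfv b)$ is then either again such a relation or a trivial pair. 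Hence it holds in $\s_3$.
\item For $\eta_{ij}$, if $a$ or $b$ is sent to $\varepsilon$, the relation becomes trivial. Otherwise $\varphi(a),\varphi(b)\in\{13,31\}$. Both sides have the form $X\bfs X\bfs' X$ (first type) or $X\bfs X\bfs' Y$ (second type), where $\con(X)=\con(Y)=\{1,3\}$ and the two sides differ only by swapping the adjacent factors $\varphi(a)\varphi(b)$. Such a swap preserves $\ev$. The first occurrences of $1$ and $3$ lie in the initial $X$, and the last occurrences lie in the final $X$ or $Y$, so $\ip$ and $\fp$ are also preserved. Proposition \ref{pro:Sij} then gives equality in $\s_3$.
\end{enumerate}
So each $\varphi_{ij}$ factors through $\equiv_{\m}$.

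\emph{Sufficiency.} Assume $\varphi_{ij}(\bfu)\equiv_{\m}\varphi_{ij}(\bfv)$ for all $i<j$. First we recover $\ev$. The pair $(1,n)$ is in Case 1 and counts occurrences of $1$ and of $n$ separately. For $2\le k\le n-1$, the pair $(1,k)$ is in Case 2, where $k\mapsto[2]_{\m}$ injectively, and this counts occurrences of $k$. Hence $\ev(\bfu)=\ev(\bfv)$.

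Next, $\ip(\bfu)$ is determined by $\con(\bfu)$ together with, for each pair $i<j$ of letters both occurring in $\bfu$, which of $i$ and $j$ occurs first; the analogous statement holds for $\fp$ with last occurrences. We read off this order from the images.
\begin{enumerate}[\rm(a)]
\item In Cases 1 and 2, $i$ and $j$ are sent to distinct single letters. The order of their first (resp.\ last) occurrences in $\bfu$ is therefore the order of the corresponding letters in $\ip(\varphi_{ij}(\bfu))$ (resp.\ $\fp(\varphi_{ij}(\bfu))$).
\item In Case 3, $i\mapsto 13$ and $j\mapsto 31$. The image starts with $1$ exactly when $i$ occurs before $j$, so $\ip(\varphi_{ij}(\bfu))=13$ or $31$ records the order of first occurrences. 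The image ends with $3$ exactly when the last of $i,j$ in $\bfu$ is $i$, so $\fp$ records the order of last occurrences.
\end{enumerate}
If only one of $i,j$ occurs, there is nothing to compare, and $\ev$ already settles it. Thus all pairwise comparisons agree between $\bfu$ and $\bfv$, which gives $\ip(\bfu)=\ip(\bfv)$ and $\fp(\bfu)=\fp(\bfv)$. Proposition \ref{pro:Sij} then yields $\bfu\equiv_{\m}\bfv$.

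The main obstacle is Case 3, where letters are sent to two-letter words. In the necessity direction it needs the sandwich argument in (b) rather than a relation-to-relation correspondence, and in the sufficiency direction it needs the observation that $13$ and $31$ encode first and last occurrence orders through opposite letters.
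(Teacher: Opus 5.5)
Your proof is correct, but the necessity direction takes a different route from the paper's.

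\textbf{Necessity.} The paper argues directly from Proposition~\ref{pro:Sij}. If $\bfu\equiv_{\m}\bfv$, then $\bfu$ and $\bfv$ have the same $\ev$ and the same relative order of first and last occurrences of the letters not sent to $[\varepsilon]_{\m}$. Since $\lambda_{ij}$ and $\theta_{ij}$ just relabel the restriction of a word to those letters, and $\eta_{ij}$ sends $i,j$ to $13,31$, the images have the same $\ev$, $\ip$ and $\fp$.

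You instead check that every $\varphi_{ij}$ identifies the two sides of each pair in $\mathcal{R}_{\s_{\infty}}$ restricted to $\mathcal{A}_n$. This makes the $\eta_{ij}$ case a purely local verification: the swapped block lies strictly between an initial and a final factor with content $\{1,3\}$. The cost is that it rests on the completeness of the presentation quoted from Aird and Ribeiro, i.e.\ that $\equiv_{\m}$ is exactly the congruence generated by these pairs. The paper's route needs nothing beyond Proposition~\ref{pro:Sij}, which you use anyway.

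\textbf{Sufficiency.} Here the paper only writes ``symmetrically'', and your argument is the right way to fill this in. It is not a literal symmetry, because one has to choose which maps recover which data. The multiplicities of the letters $2,\dots,n-1$ come only from the Case~2 maps $\theta_{1k}$. In Case~3, the order of first (resp.\ last) occurrences of $i,j$ is read off from whether $\ip$ (resp.\ $\fp$) of the image is $13$ or $31$. This is the same observation on which the paper's Case~3 necessity argument rests.

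\textbf{A small slip.} In (b), the displayed forms $X\bfs X\bfs' X$ and $X\bfs X\bfs' Y$ omit the swapped factor. They should read $X\bfs XY\bfs' X$ versus $X\bfs YX\bfs' X$, and likewise with final factor $Y$. Your argument clearly means this.
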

\begin{proof}
To prove the necessity, it suffices to show that for any $i,j$ with $1 \leq i < j \leq n$, if $\bfu\equiv_{\m}\bfv$, then $\varphi_{ij}(\bfu)\equiv_{\m}\varphi_{ij}(\bfv)$.
If $\bfu\equiv_{\mathsf{m}}\bfv$, then $\ev(\bfu)=\ev(\bfv)$ by Proposition \ref{pro:Sij}.
It follows from the definition of $\varphi_{ij}$ that  $\ev(\varphi_{ij}(\bfu))=\ev(\varphi_{ij}(\bfv))$. In the following, we prove that
$\ip(\varphi_{ij}(\bfu))=\ip(\varphi_{ij}(\bfv))$ and $\fp(\varphi_{ij}(\bfu))=\fp(\varphi_{ij}(\bfv))$, that is,
for any $h, k$ with $1\leq h<k\leq3$, ${_1h} \prec_{\varphi_{ij}(\bfu)} {_1k}$ if and only if ${_1h}\prec_{\varphi_{ij}(\bfv)} {_1k}$ and ${_\infty h}\prec_{\varphi_{ij}(\bfu)} {_\infty k}$ if and only if ${_\infty h}\prec_{\varphi_{ij}(\bfv)} {_\infty k}$.

{\bf Case 1.} $\varphi_{ij}=\lambda_{ij}$. If $\bfu\equiv_{\m}\bfv$, then ${_11} \prec_{\bfu} {_1n}$ if and only if ${_11}\prec_{\bfv} {_1n}$ and ${_\infty 1}\prec_{\bfu} {_\infty n}$ if and only if ${_\infty 1}\prec_{\bfv} {_\infty n}$ by Proposition \ref{pro:Sij}.
It follows from the definition of $\varphi_{ij}$ that  ${_11} \prec_{\varphi_{1n}(\bfu)} {_13} $ if and only if ${_11} \prec_{\varphi_{1n}(\bfv)} {_13}$ and ${_\infty 1}\prec_{\varphi_{1n}(\bfu)} {_\infty 3}$ if and only if ${_\infty 1}\prec_{\varphi_{1n}(\bfv)} {_\infty 3}$.

{\bf Case 2.} $\varphi_{ij}=\theta_{ij}$.
If $\bfu\equiv_{\m}\bfv$, then  for any $s<t\in \con(\bfu)$, ${_1}s \prec_{\bfu} {_1}t$ if and only if ${_1}s \prec_{\bfv} {_1}t$ and ${_\infty} s \prec_{\bfu} {_\infty} t$ if and only if ${_\infty} s\prec_{\bfv} {_\infty} t$ by Proposition \ref{pro:Sij}.
It follows from the definition of $\varphi_{ij}$
that for any $h, k$ with $1\leq h<k\leq3$, ${_1h} \prec_{\varphi_{ij}(\bfu)} {_1k}$ if and only if ${_1h}\prec_{\varphi_{ij}(\bfv)} {_1k}$ and ${_\infty h}\prec_{\varphi_{ij}(\bfu)} {_\infty k}$ if and only if ${_\infty h}\prec_{\varphi_{ij}(\bfv)} {_\infty k}$.

{\bf Case 3.} $\varphi_{ij}=\eta_{ij}$. If $\bfu\equiv_{\m}\bfv$, then  ${_1i} \prec_{\bfu} {_1j}$ if and only if ${_1i}\prec_{\bfv} {_1j}$ and ${_\infty i}\prec_{\bfu} {_\infty j}$ if and only if ${_\infty i}\prec_{\bfv} {_\infty j}$ by Proposition \ref{pro:Sij}. It follows from the definition of $\varphi_{ij}$ that  ${_11} \prec_{\varphi_{ij}(\bfu)} {_13} $ if and only if ${_11} \prec_{\varphi_{ij}(\bfv)} {_13}$ and ${_\infty 1}\prec_{\varphi_{ij}(\bfu)} {_\infty 3}$ if and only if ${_\infty 1}\prec_{\varphi_{ij}(\bfv)} {_\infty 3}$.

The sufficiency can be obtained symmetrically.
\end{proof}

For each $n \geq 4$, consider the map
\[
\phi_n : (\sn,^{\sigma_1}) \longrightarrow \prod\limits_{I_n} (\s_3,^{\sigma_1})
\]
whose $(i,j)$-th component is given by $\varphi_{ij}([\bfw]_{\m})$ for $\bfw \in \mathcal{A}_n^{\star}$ and $(i,j) \in I_n$.
		
\begin{lemma}
\label{lem:staln_embed_inv2_stal3}
The map $\phi_n$ is an embedding.
\end{lemma}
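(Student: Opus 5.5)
The plan mirrors the proof of Lemma~\ref{lem:staln_embed_stal2}. The map $\phi_n$ should be a well-defined homomorphism of involution monoids, and it should be injective. Both facts will come out of Lemma~\ref{lem:varphi_equiv_inv2_sn} together with the observation, made before that lemma, that each $\varphi_{ij}$ (whether it is $\lambda_{ij}$, $\theta_{ij}$ or $\eta_{ij}$) is a homomorphism from $(\mathcal{A}_n^{\star},^{\sigma_1})$ to $(\s_3,^{\sigma_1})$.

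First, I check that $\phi_n$ is well defined on $\sn$. Let $[\bfu]_{\m}=[\bfv]_{\m}$, so $\bfu\equiv_{\m}\bfv$. The necessity part of Lemma~\ref{lem:varphi_equiv_inv2_sn} gives $\varphi_{ij}(\bfu)\equiv_{\m}\varphi_{ij}(\bfv)$ for every $(i,j)\in I_n$, and hence each component of $\phi_n$ does not depend on the chosen representative.

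Second, $\phi_n$ preserves products, the identity and the involution. This holds componentwise, because each $\varphi_{ij}$ is a monoid homomorphism with $\varphi_{ij}(k^{\sigma_1})=(\varphi_{ij}(k))^{\sigma_1}$ for every letter $k$. The letterwise compatibility extends to arbitrary words since both involutions are anti-automorphisms:
\[\varphi_{ij}(\bfw^{\sigma_1})=\varphi_{ij}(\sigma_1(a_k)\cdots\sigma_1(a_1))=(\varphi_{ij}(a_1)\cdots\varphi_{ij}(a_k))^{\sigma_1}=(\varphi_{ij}(\bfw))^{\sigma_1}.\]
The product $\prod_{I_n}(\s_3,^{\sigma_1})$ carries the componentwise operations, so $\phi_n$ is a homomorphism of involution monoids.

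Third, injectivity. Suppose $\phi_n([\bfu]_{\m})=\phi_n([\bfv]_{\m})$. Then $\varphi_{ij}(\bfu)\equiv_{\m}\varphi_{ij}(\bfv)$ for all $1\le i<j\le n$, and the sufficiency part of Lemma~\ref{lem:varphi_equiv_inv2_sn} yields $\bfu\equiv_{\m}\bfv$.

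I expect no real obstacle here, since all the combinatorial work is packed into Lemma~\ref{lem:varphi_equiv_inv2_sn}. The one point to treat carefully is the involution compatibility, which must hold for words and not only for letters; the anti-automorphism computation above handles that. The sufficiency direction of that lemma deserves a check, because it relies on the three types of maps jointly recovering $\ev$, $\ip$ and $\fp$ for every pair of letters. If a gap appeared there, I would argue directly that for each pair $s<t$ some $\varphi_{ij}$ records the relative order of the first (and last) occurrences of $s$ and $t$. In particular, a pair of two fixed points $i,j$ is handled by $\eta_{ij}$, where the first occurrence order of $1$ versus $3$ in the image reflects that of $i$ versus $j$.
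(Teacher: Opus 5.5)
Your proposal is correct and follows the same route as the paper: each $\varphi_{ij}$ is a homomorphism of involution monoids. The two directions of Lemma~\ref{lem:varphi_equiv_inv2_sn} then give, respectively, that $\phi_n$ is well defined on $\sn$ and that it is injective. Your extension of the involution compatibility from letters to words, and your check of the sufficiency direction (for instance via $\eta_{ij}$ for two fixed points), only make explicit details that the paper's one-line proof leaves implicit.
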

	
\begin{proof}
It follows from the definition of $\phi_n$ and Lemma~\ref{lem:varphi_equiv_inv2_sn} that $\phi_n$ is a homomorphism and for any $\bfu, \bfv \in \mathcal{A}_n^{\star}$, $\bfu \equiv_{\m} \bfv$ if and only if $\phi_n([\bfu]_{\m}) = \phi_n([\bfv]_{\m})$. Therefore $\phi_n$ is an embedding.
\end{proof}

\begin{theorem}\label{thm:stal-id-inv2-stal3}
For any $ n \geq 3$, $(\s_n,^{\sigma_1})$ and $(\s_3,^{\sigma_1})$ satisfy exactly the same identities.
\end{theorem}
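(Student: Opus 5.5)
The plan is to follow the proof of Theorem~\ref{thm:stal-id-stal2}: show that each of $(\s_n,^{\sigma_1})$ and $(\s_3,^{\sigma_1})$ lies in the variety generated by the other. The case $n=3$ is trivial. So fix $n\geq 4$.

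First, by Lemma~\ref{lem:staln_embed_inv2_stal3}, the map $\phi_n$ embeds $(\sn,^{\sigma_1})$ into the direct product $\prod_{I_n}(\s_3,^{\sigma_1})$ of $\binom{n}{2}$ copies of $(\s_3,^{\sigma_1})$. For this to be an embedding of involution monoids, each coordinate map $\varphi_{ij}$ must commute with the involution. This was checked case by case for $\lambda_{ij}$, $\theta_{ij}$ and $\eta_{ij}$ when the maps were defined, and the involution on the product acts componentwise. Hence $(\sn,^{\sigma_1})$ belongs to the variety generated by $(\s_3,^{\sigma_1})$, and every identity of $(\s_3,^{\sigma_1})$ holds in $(\sn,^{\sigma_1})$.

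For the converse I will exhibit $(\s_3,^{\sigma_1})$ as an involution submonoid of $(\sn,^{\sigma_1})$. This is the step needing care, because $\sigma_1$ swaps $1$ and $n$ and fixes $2$. Define $\iota$ on generators by $1\mapsto 1$, $2\mapsto 2$, $3\mapsto n$. By Proposition~\ref{pro:Sij}, $\equiv_{\m}$ is determined by $\ev$, $\ip$ and $\fp$, and a relabelling of letters preserves each of these data. So $\iota$ induces a well-defined injective monoid homomorphism $\s_3\to\sn$ whose image is the submonoid generated by $[1]_\m,[2]_\m,[n]_\m$.

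It remains to check compatibility with the involutions on generators. In $\s_3$ we have $1^{\sigma_1}=3$ and $2^{\sigma_1}=2$, while in $\sn$ we have $1^{\sigma_1}=n$ and $2^{\sigma_1}=2$. Hence $\iota(k^{\sigma_1})=(\iota(k))^{\sigma_1}$ for $k\in\{1,2,3\}$. Since both involutions are anti-automorphisms, this extends from generators to all of $\s_3$. Thus $(\s_3,^{\sigma_1})$ embeds in $(\sn,^{\sigma_1})$ and lies in the variety it generates.

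Combining the two inclusions, $(\sn,^{\sigma_1})$ and $(\s_3,^{\sigma_1})$ generate the same variety and so satisfy exactly the same identities. The only genuine obstacle is the choice of the embedding $\iota$ sending $3$ to $n$ rather than to $3$, so that it respects $\sigma_1$.
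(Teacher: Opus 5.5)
Your proof is correct and follows the paper's argument. You use the embedding $\phi_n$ of $(\sn,^{\sigma_1})$ into a direct power of $(\s_3,^{\sigma_1})$ for one inclusion of varieties, and realize $(\s_3,^{\sigma_1})$ as an involution submonoid of $(\sn,^{\sigma_1})$ for the other. Your explicit embedding $1\mapsto 1$, $2\mapsto 2$, $3\mapsto n$ supplies a detail the paper leaves implicit when it simply calls $(\s_3,^{\sigma_1})$ a submonoid; this detail matters, because for $n\geq 4$ the naive inclusion $3\mapsto 3$ does not commute with $^{\sigma_1}$.
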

\begin{proof}
By Theorem \ref{lem:staln_embed_inv2_stal3}, $(\sn,^{\sigma_1})$  can be embedded into a direct product of $\binom{n}{2}$ copies of $(\s_3,^{\sigma_1})$.
Thus,  for any $n> 3$, $(\sn,^{\sigma_1})$ is in the variety generated by $(\s_3,^{\sigma_1})$. Since $(\s_3,^{\sigma_1})$ is a submonoid of $(\sn,^{\sigma_1})$ for any $n > 3$, $(\s_3,^{\sigma_1})$ is in the variety generated by $(\s_n,^{\sigma_1})$. Therefore they satisfy exactly the same identities.
\end{proof}
%

\subsection{Characterizations of identities and identity basis}
\begin{theorem}\label{lem:inv2ids2}
An identity $\bfu\approx \bfv$ holds in $(\s_2,^{\sigma_1})$ if and only if
\begin{enumerate}[\rm(i)]
  \item $\bfu\approx \bfv$ is balanced.
  \item  for any $x,y \in \con(\bfu\bfv)$ with $x, x^*\neq y$, the first [resp. last] variable of $\bfu[x,y]$ is the same as that of $\bfv[x,y]$.
\end{enumerate}
\end{theorem}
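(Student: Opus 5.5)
I will follow the proof of Theorem~\ref{lem:inv1ids2}, adapting it to $\sigma_1$. In $\s_2$ the permutation $\sigma_1$ swaps $1$ and $2$, so $[1]_\m^{\sigma_1}=[2]_\m$ and $[2]_\m^{\sigma_1}=[1]_\m$. Consequently a substitution now separates $x$ from $x^*$, which is why condition (i) of Theorem~\ref{lem:inv1ids2} is strengthened to balancedness.

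\emph{Necessity.} Suppose $\bfu\approx\bfv$ holds in $(\s_2,^{\sigma_1})$.
\begin{enumerate}[(a)]
\item For balancedness, fix $x\in\mathcal X$ and substitute $x\mapsto[1]_\m$, every other variable $\mapsto[\varepsilon]_\m$. Then $x^*\mapsto[2]_\m$, so comparing evaluations gives $\occ(x,\bfu)=\occ(x,\bfv)$ and $\occ(x^*,\bfu)=\occ(x^*,\bfv)$.
\item For condition (ii), suppose $\bfu[x,y]$ and $\bfv[x,y]$ start with different variables $p\neq q$.
\begin{itemize}
\item If $q=p^*$, use the one-variable substitution above. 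The first letters of the images are $1$ and $2$, so the initial parts differ.
\item If $q\in\{y,y^*\}$, substitute $\bar x\mapsto[1]_\m$ and $\bar y\mapsto[1]_\m$ or $[2]_\m$, choosing so that $p$ and $q$ receive different letters. This is possible since each of $p,q$ is a variable or the star of one, and the star flips the letter. The first letters of the images then differ.
\end{itemize}
The statement about the last variable follows by the dual argument.
\end{enumerate}

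\emph{Sufficiency.} Let $\phi$ be any substitution into $\s_2$; by Remark~\ref{rem:delete} we may assume no variable is sent to $[\varepsilon]_\m$.
\begin{enumerate}[(a)]
\item Balancedness, together with the fact that $\phi(x^*)$ is determined by $\phi(x)$, gives $\ev(\phi(\bfu))=\ev(\phi(\bfv))$.
\item If only one letter occurs, we are done. Otherwise, by Proposition~\ref{pro:Sij} and symmetry, it suffices to show that the first letter of $\phi(\bfu)$ equals that of $\phi(\bfv)$. The first letter of $\phi(\bfu)$ is the first letter of $\phi(z)$, where $z$ is the first variable of $\bfu$; let $z'$ be the first variable of $\bfv$.
\item If $\bar z=\bar{z'}$, condition (ii) applied to $\bfu[z]$, $\bfv[z]$ gives $z=z'$. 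Here the one-variable case of (ii) is needed; if (ii) is read only for pairs, take any second variable $y$ in the content, or note that the one-variable case follows directly.
\item If $\bar z\neq\bar{z'}$, apply (ii) to $\bfu[z,z']$ and $\bfv[z,z']$. These start with $z$ and $z'$ respectively, since $z$ heads $\bfu$ and $z'$ heads $\bfv$, contradicting (ii).
\item Hence $z=z'$, so the first letters agree. Since $\s_2$ has only two letters, the first letter determines $\ip$; dually the last letter determines $\fp$, and $\phi(\bfu)=\phi(\bfv)$.
\end{enumerate}

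The main obstacle is the case analysis in the necessity of (ii), where $p,q$ range over $\{x,x^*,y,y^*\}$. The concern is that a substitution into $\s_2$ might fail to distinguish some pair, for example $p=x$ and $q=y^*$. This is handled by choosing $\phi(\bar y)$ according to whether $q$ is starred. The edge case $\bar x=\bar y$ for the one-variable condition also needs care, depending on how $\bfu[x,y]$ is read when $y=x^*$.
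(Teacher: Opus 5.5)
Your argument is correct for the intended statement, with (ii) also covering single-variable projections (see the second paragraph). It differs from the paper's proof in the necessity of (ii). The paper does not argue directly. It asserts that the involution submonoid of $(\s_2,^{\sigma_1})$ generated by $[12]_{\m}$ and $[21]_{\m}$, both fixed by $^{\sigma_1}$, is isomorphic to $(\s_2,^{\sigma_0})$, and then imports (ii) from Theorem~\ref{lem:inv1ids2}. You instead substitute single letters and use that $^{\sigma_1}$ swaps $[1]_{\m}$ and $[2]_{\m}$, so that $x$, $x^*$, $y$, $y^*$ can be separated by first letters.

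Your route is self-contained and more reliable. The submonoid generated by $[12]_{\m}$ and $[21]_{\m}$ is only a proper homomorphic image of $(\s_2,^{\sigma_0})$: for instance $[121221]_{\m}=[122121]_{\m}$, although $[112]_{\m}\neq[122]_{\m}$. So the paper's transfer additionally needs the remark that its distinguishing substitutions only detect first and last letters, which that quotient preserves. Your sufficiency argument is the paper's, made more explicit by separating $\bar z=\bar{z'}$ from $\bar z\neq\bar{z'}$.

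The edge case you flag is not a matter of interpretation, since (ii) explicitly excludes $y\in\{x,x^*\}$. Read literally, the statement is false.
\begin{itemize}
\item $xx^*\approx x^*x$ is balanced and (ii) is vacuous for it.
\item $yxx^*y\approx yx^*xy$ is balanced and satisfies (ii), because every $\bfu[x,y]$ and $\bfv[x,y]$ is the whole word, beginning and ending with $y$. Yet $x\mapsto[1]_{\m}$, $y\mapsto[\varepsilon]_{\m}$ gives $[12]_{\m}\neq[21]_{\m}$.
\end{itemize}
So neither fallback in your step (c) works in general. After discarding the variables sent to $[\varepsilon]_{\m}$ there may be no second variable left. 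The one-variable case also does not follow from (i) together with the pair condition.

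You should say explicitly that you prove the theorem with (ii) extended to the first and last variables of $\bfu[x]$ and $\bfv[x]$. Your necessity argument (the case $q=p^*$ with the one-variable substitution) already establishes this stronger condition, and your sufficiency argument then goes through as written. The paper's proof has the same hole at ``It follows from (ii) that $\bfv=z\bfv_1$''.
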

\begin{proof}
Let $\bfu\approx \bfv$ be any identity of $(\s_2,^{\sigma_1})$. Suppose that there exists some $x\in \mathcal{X}$ such that $\occ(x, \bfu)\neq \occ(x, \bfv)$. Let $\theta_1$ be the substitution that maps $x$ to $[1]_{\m}$ and any other variable to $[\varepsilon]_{\m}$. Then $\ev(\theta_1(\bfu))\neq \ev(\theta_1(\bfv))$, a contradiction. Therefore the condition (i) holds.

Note that the involution submonoid of $(\s_2,^{\sigma_1})$ generated by $[12]_{\m}$ and $[21]_{\m}$ is isomorphic to $(\s_2,^{*_1})$. Thus condition (ii) holds by Theorem \ref{lem:inv1ids2}.

Conversely, let $\bfu\approx \bfv$ be any identity satisfying (i) and (ii) and $\phi$ be any substitution from $\mathcal{X}$ to $\s_2$.
It follows from (i) that $\ev(\phi(\bfu))=\ev(\phi(\bfv))$. If $\con(\phi(\bfu))=\{1\}$ or $\{2\}$, then by (i) we have $\phi(\bfu)=\phi(\bfv)$.
If $\con(\phi(\bfu))=\{1,2 \}$, to show $\phi(\bfu)=\phi(\bfv)$, it suffices to show that $\ip(\phi(\bfu))=\ip(\phi(\bfv))$ and $\fp(\phi(\bfu))=\fp(\phi(\bfv))$. By symmetry, we only need to show that $\ip(\phi(\bfu))=\ip(\phi(\bfv))$.

If ${_11}\prec_{\phi(\bfu)} {_12}$, assume that $\phi(x)\neq [\varepsilon]_{\m}$ for any $x\in \con(\bfu)$ by Remark \ref{rem:delete}, then $\bfu$ can be written into the form $\bfu = z\bfu_1$ satisfying $\phi(z)$ starts with $1$.  It follows from (ii) that $\bfv = z\bfv_1$.
Thus ${_11}\prec_{\phi(\bfu)} {_12}$  implies ${_11}\prec_{\phi(\bfv)} {_12}$, and by symmetry we have that ${_11}\prec_{\phi(\bfu)} {_12}$  if and only if ${_11}\prec_{\phi(\bfv)} {_12}$.
 Therefore  $\ip(\phi(\bfu))=\ip(\phi(\bfv))$.
\end{proof}

\begin{theorem}\label{lem:inv2ids3}
An identity $\bfu\approx \bfv$ holds in $(\s_3,^{\sigma_1})$ if and only if
\begin{enumerate}[\rm(i)]
  \item $\bfu\approx \bfv$ is balanced.
  \item  for any $x,y \in \con(\bfu\bfv)$ with $x, x^* \neq y$
   \begin{enumerate}[\rm(a)]
      \item  if $\bfu[x,y]\in x^{\alpha}x^*F_{\mathsf{inv}}^{\varepsilon}(\{x, y\})$ for some $\alpha\geq 1$, then $\bfv[x,y]\in x^{\beta}x^*F_{\mathsf{inv}}^{\varepsilon}(\{x, y\})$ for some $\beta\geq 1$.

      \item if $\bfu[x,y]\in F_{\mathsf{inv}}^{\varepsilon}(\{x, y\})x^*x^{\alpha}$ for some $\alpha\geq 1$, then $\bfv[x,y]\in F_{\mathsf{inv}}^{\varepsilon}(\{x, y\})x^*x^{\beta}$ for some $\beta\geq 1$.
      \item  if $\bfu[x,y]\in x^{\alpha}yF_{\mathsf{inv}}^{\varepsilon}(\{x, y\})$ for some $\alpha\geq 1$, then $\bfv[x,y]\in x^{\beta}yF_{\mathsf{inv}}^{\varepsilon}(\{x, y\})$ for some $\beta\geq 1$.

      \item if $\bfu[x,y]\in F_{\mathsf{inv}}^{\varepsilon}(\{x, y\})yx^{\alpha}$ for some $\alpha\geq 1$, then $\bfv[x,y]\in F_{\mathsf{inv}}^{\varepsilon}(\{x, y\})yx^{\beta}$ for some $\beta\geq 1$.
   \end{enumerate}
\end{enumerate}
\end{theorem}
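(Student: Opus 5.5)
The plan is to prove necessity by explicit substitutions into $\s_3$, and sufficiency by comparing initial and final parts of $\phi(\bfu)$ and $\phi(\bfv)$ via Proposition~\ref{pro:Sij}, as in Theorems~\ref{lem:inv1ids2} and~\ref{lem:inv2ids2}. Throughout, recall that $\sigma_1$ swaps $1$ and $3$ and fixes $2$. Hence for $\bfw=a_1\cdots a_k$ we have $[\bfw]_\m^{\sigma_1}=[\sigma_1(a_k)\cdots\sigma_1(a_1)]_\m$.

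\emph{Necessity.}
\begin{enumerate}[\rm(1)]
\item For (i), I substitute $x\mapsto[1]_\m$ and every other variable $\mapsto[\varepsilon]_\m$. Since $x^*\mapsto[3]_\m$, the counts $\occ(x,\cdot)$ and $\occ(x^*,\cdot)$ are recorded separately in the evaluation, which forces balance in the strong sense (starred and unstarred letters counted separately).
\item For (a), I substitute $x\mapsto[1]_\m$ and $y\mapsto[2]_\m$, so that $x^*\mapsto[3]_\m$ and $y^*\mapsto[2]_\m$. If $\bfu[x,y]=x^\alpha x^*\cdots$, then $\ip(\phi(\bfu))$ begins $13$. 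Suppose $\bfv[x,y]$ is not of the form in (a).
 \begin{itemize}
 \item If $\bfv[x,y]$ starts with $x^*$, then $\ip$ begins with $3$.
 \item If it starts with $y$ or $y^*$, then $\ip$ begins with $2$.
 \item Otherwise $\bfv[x,y]=x^\beta y^{\circledast}\cdots$, and $\ip$ begins $12$.
 \end{itemize}
 In each case $\ip$ differs, contradicting Proposition~\ref{pro:Sij}. By balance $x^*$ does occur in $\bfv$, so no degenerate case arises.
\item Condition (b) is the left--right dual of (a), using $\fp$.
\item For (c), I substitute $x\mapsto[2]_\m$ and $y\mapsto[1]_\m$, so $x^*\mapsto[2]_\m$ and $y^*\mapsto[3]_\m$. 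If $\bfu[x,y]=x^\alpha y\cdots$, then $\ip(\phi(\bfu))$ begins $21$. Consider the possible forms of $\bfv[x,y]$.
 \begin{itemize}
 \item If $\bfv[x,y]$ starts with $x^\beta x^*$, then (a) applied with $\bfu$ and $\bfv$ interchanged would force $\bfu[x,y]$ to begin $x^\alpha x^*$, which it does not.
 \item If it starts with $y$ or $y^*$, then $\ip$ begins with $1$ or $3$.
 \item If it starts with $x^*$, this is the case $\bfv[x,y]=x^*\cdots$, treated next.
 \item If it is $x^\beta y^*\cdots$, then $\ip$ begins $23$.
 \end{itemize}
 The case $\bfv[x,y]=x^*\cdots$ is the delicate one. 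Under this substitution it still gives $\ip$ beginning $2\cdots$, so I would instead use the substitution $x\mapsto[12]_\m$, which separates $x$ from $x^*\mapsto[23]_\m$. Alternatively, I would first show as a lemma that the first letters of $\bfu[x,y]$ and $\bfv[x,y]$ agree, by restricting to the involution submonoid generated by $[13]_\m,[31]_\m$ as in Theorem~\ref{lem:inv2ids2}. That restriction gives the condition of Theorem~\ref{lem:inv1ids2}(ii).
\item Condition (d) is the dual of (c).
\end{enumerate}

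\emph{Sufficiency.} Assume (i) and (ii), and let $\phi$ be a substitution into $\s_3$. By Remark~\ref{rem:delete} I may assume no variable is sent to $[\varepsilon]_\m$. Balance gives $\ev(\phi(\bfu))=\ev(\phi(\bfv))$. It remains to show that for each pair $h<k$ in $\{1,2,3\}$, the first occurrences of $h$ and $k$ appear in the same order in $\phi(\bfu)$ and $\phi(\bfv)$; last occurrences follow by the dual argument.
\begin{enumerate}[\rm(1)]
\item Let $z$ be the first letter of $\bfu$ whose image contains $h$ or $k$, and let $w$ be the first letter after it whose image supplies the other of $h,k$ (if $\phi(z)$ does not already contain both).
\item Deleting all other variables reduces the question to the two-variable word $\bfu[\overline z,\overline w]$, or to a one-variable word when $\overline w=\overline z$.
\item The order is then decided by how this word begins: with $z^\alpha w$, with $z^\alpha z^*$, or with the first letter alone.
\item Conditions (a) and (c), together with the first-letter agreement lemma from the necessity part, force $\bfv[\overline z,\overline w]$ to begin in the same pattern. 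This yields the same order of first occurrences of $h$ and $k$.
\end{enumerate}

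\emph{Main obstacle.} The hardest step is this sufficiency case analysis. It has to cover all the ways the images $\phi(z)$, $\phi(z^*)$, $\phi(w)$, $\phi(w^*)$ can contain $h,k$, in particular when $\phi(z)$ and $\phi(z^*)$ share letters. I expect to handle it by noting that $\phi(z^*)$ is the $\sigma_1$-reverse of $\phi(z)$, so the letter $2$ behaves symmetrically. The analysis then splits according to whether $\{h,k\}=\{1,3\}$ or contains $2$.
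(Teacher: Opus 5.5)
\textbf{Necessity.} Your necessity half is sound and is essentially the paper's argument by explicit substitutions into $\s_3$. The paper obtains (i) from the submonoid generated by $[1]_{\m},[3]_{\m}$ rather than directly, but that is immaterial. Drop the ``alternative'' via $[13]_{\m},[31]_{\m}$, though. That submonoid is only a homomorphic image of $(\s_2,^{\sigma_0})$, not an isomorphic copy: $1112$ and $1222$ both map to the element with evaluation $(4,0,4)$, initial part $13$ and final part $31$. So Theorem~\ref{lem:inv1ids2} cannot be transported to it.

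\textbf{The gap in sufficiency.} In step (4) you use ``the first-letter agreement lemma from the necessity part''. That lemma was obtained by assuming $\bfu\approx\bfv$ holds. In the sufficiency direction you only have (i) and (ii), so this is circular.
\begin{enumerate}[\rm(1)]
\item For two-variable words it can be repaired. If $\bfu[x,y]$ starts with $x$, then either $\bfu[x,y]=x^{\alpha}$ and balance settles it, or the next distinct letter is $x^*$, $y$ or $y^*$, and (a) or (c) forces $\bfv[x,y]$ to start with $x$.
\item For the one-variable word $\bfu[\overline{z}]$ it cannot. You explicitly allow this word in step (2), and it is unavoidable whenever $\phi(z)$ and $\phi(z^*)$ put $h$ and $k$ in opposite orders. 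Nothing in (ii) applies to it, because (ii) only ranges over $y$ with $y\neq x,x^*$.
\item In fact the theorem is false as stated. The identity $xx^*\approx x^*x$ satisfies (i), and (ii) vacuously. The identity $yy^*xx^*yy^*\approx yy^*x^*xyy^*$ satisfies (i) and every instance of (a)--(d). Yet $x\mapsto[1]_{\m}$, $y\mapsto[\varepsilon]_{\m}$ sends the two sides to $[13]_{\m}\neq[31]_{\m}$.
\end{enumerate}
So no sufficiency argument can succeed until (a) and (b) are also imposed on the one-variable restrictions $\bfu[x]$. Equivalently, $\bfu[x]$ and $\bfv[x]$ must have the same first and last letters. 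This is how the paper itself later uses (iia), on $\bfu[b]$, in the proof of Theorem~\ref{thm:FBS3inv2}. The paper's own sufficiency sketch has the same hole.

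\textbf{Steps (1)--(4) target the wrong letter.} Even with that amendment, the reduction does not work. Call an occurrence relevant if its image meets $\{h,k\}$, and let $z$ be the first relevant occurrence in $\bfu$. The order of the first $h$ and the first $k$ in $\phi(\bfu)$ is decided by $z$ and $\phi(z)$ alone; $w$ plays no role. What must be shown is that in $\bfv$ no relevant occurrence giving the opposite order precedes $z$. That requires comparing $z$ with every relevant variable $t$, and restricting to $\{\overline{z},\overline{w}\}$ can hide such a $t$ in $\bfv$. The useful instances of (a) and (c) have $z$ as the \emph{second} distinct letter, preceded only by irrelevant letters.
\begin{enumerate}[\rm(1)]
\item For $\{h,k\}=\{1,3\}$, relevance is closed under $^*$. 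It therefore suffices that $\bfu[T]$ and $\bfv[T]$ have the same first letter, where $T$ is the set of relevant plain variables. This follows from the pairwise and one-variable agreements.
\item For $\{1,2\}$ (dually $\{2,3\}$), $z$ can be relevant with $z^*$ irrelevant only if $\phi(z)\in 1^+$. Then only variables $t$ whose image contains $2$ matter, and $\bfu[\overline{z},\overline{t}]$ begins with $(z^*)^{a}z$. Condition (a) applied to $z^*$ (or first-letter agreement if $a=0$) puts $z$ before $t$ and $t^*$ in $\bfv$.
\item Otherwise $z^*$ does not precede $z$. Each $\bfu[\overline{z},\overline{t}]$ begins with $(t')^{a}z$, where $a\geq 0$ and $t'\in\{t,t^*\}$ is irrelevant. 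Condition (c) applied to $(t',z)$ makes $z$ the first relevant occurrence in $\bfv$. When $a=0$ use first-letter agreement, and when $\overline{z}$ is the only relevant variable use the one-variable condition.
\end{enumerate}
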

\begin{proof}
Let $\bfu\approx \bfv$ be any identity of $(\s_3,^{\sigma_1})$.
Note that the involution submonoid of $(\s_3,^{\sigma_1})$ generated by $[1]_{\m}$ and $[3]_{\m}$ is isomorphic to $(\s_2,^{\sigma_1})$. Thus condition (i) holds by Theorem \ref{lem:inv2ids2}.

If $\bfu[x,y]\in x^{\alpha}x^*F_{\mathsf{inv}}^{\varepsilon}(\{x, y\})$ for some $\alpha\geq 1$, then $\bfv[x,y]\in x^{\beta}zF_{\mathsf{inv}}^{\varepsilon}(\{x, y\})$ for some $z\neq x, \beta\geq 1$ by Theorem \ref{lem:inv2ids2}.
Suppose that $z\neq x^*$. Let $\theta_1$ be the substitution such that $x\mapsto [1]_{{\s}_3}, z\mapsto [2]_{{\s}_3}$. Then
\begin{align*}
\theta_1(\bfu[x,y])=[1^{\alpha}3]_{\m}\cdot [\bfs]_{\m}\neq [1^{ \beta}2]_{\m}\cdot [\bft]_{\m}=\theta_1(\bfv[x,y])
\end{align*}
where $\bfs, \bft \in \mathcal{A}_2^{\star}$, a contradiction. Thus $\bfv[x,y]\in x^{\beta}x^*F_{\mathsf{inv}}^{\varepsilon}(\{x, y\})$ for some $\beta\geq 1$, and so the condition (iia) holds. The condition (iib) holds by symmetry.

If $\bfu[x,y]\in x^{\alpha}yF_{\mathsf{inv}}^{\varepsilon}(\{x, y\})$ for some $y \neq x, x^*, \alpha\geq 1$, then $\bfv[x,y]\in x^{\beta}zF_{\mathsf{inv}}^{\varepsilon}(\{x, y\})$ for some $z\neq x, \beta\geq 1$ by Theorem \ref{lem:inv2ids2}.
It follows from the above argument that $z = y$. Thus $\bfv[x,y]\in x^{\beta}yF_{\mathsf{inv}}^{\varepsilon}(\{x, y\})$ for some $\beta\geq 1$, and so the condition (iic) holds. The condition (iid) holds by symmetry.

Conversely, let $\bfu\approx \bfv$ be any identity satisfying (i) and (ii) and $\phi$ be any substitution from $\mathcal{X}$ to $\s_3$.
It follows from (i) that $\ev(\phi(\bfu))=\ev(\phi(\bfv))$. To show $\phi(\bfu)=\phi(\bfv)$, it suffices to show that $\ip(\phi(\bfu))=\ip(\phi(\bfv))$ and $\fp(\phi(\bfu))=\fp(\phi(\bfv))$. By symmetry, we only need to show that $\ip(\phi(\bfu))=\ip(\phi(\bfv))$.
We may assume that $\phi(x)\neq [\varepsilon]_{\m}$ for any $x\in \con(\bfu)$ by Remark \ref{rem:delete}.

If ${_11}\prec_{\phi(\bfu)} {_12}$, then $\bfu$ can be written into the form $\bfu = \bfu_1z\bfu_2$ for some possibly empty words $\bfu_1, \bfu_2$
satisfying $\{1\}\subseteq\con(\phi(\bfu_1))\subseteq\{1, 3\}$ and $2 \in \con(\phi(z))$.  Note that $z, z^*\not\in\con(\bfu_1)$. It follows from (iia) and (iic) that $\bfv = \bfv_1z\bfv_2$ for some possibly empty words $\bfv_1, \bfv_2$ satisfying $\con(\bfu_1)=\con(\bfv_1)$.
Therefore ${_11}\prec_{\phi(\bfu)} {_12}$ implies ${_11}\prec_{\phi(\bfv)} {_12}$, and by symmetry we have that ${_11}\prec_{\phi(\bfu)} {_12}$ if and only if ${_11}\prec_{\phi(\bfv)} {_12}$.

If ${_11}\prec_{\phi(\bfu)} {_13}$, then $\bfu$ can be written into the form $\bfu = \bfu_1z\bfu_2$ for some possibly empty words $\bfu_1, \bfu_2$
satisfying $\{1\}\subseteq\con(\phi(\bfu_1))\subseteq\{1, 2\}$ and $3 \in \con(\phi(z))$.  Note that for any $x\in \con(\bfu_1)$, $x^*\in \con(\bfu_1)$ only if $\con(\phi(x))=\{2\}$, $z\not\in\con(\bfu_1)$, and $z^*\in\con(\bfu_1)$ only if $\con(\phi(z))=\{3\}$ or $\{2, 3\}$. Let $\mathcal{X}\subseteq \con(\bfu_1)$ be the set satisfying for any $x\in \mathcal{X}$, $\con(\phi(x))\neq \{2\}$.  It follows from (iia) and (iic) that $\bfv = \bfv_1z\bfv_2$ for some possibly empty words $\bfv_1, \bfv_2$ satisfying $\con(\bfu_1[\mathcal{X}])=\con(\bfv_1[\mathcal{X}])$.
Therefore ${_11}\prec_{\phi(\bfu)} {_13}$ implies ${_11}\prec_{\phi(\bfv)} {_13}$, and by symmetry we have that ${_11}\prec_{\phi(\bfu)} {_13}$ if and only if ${_11}\prec_{\phi(\bfv)} {_13}$.

If ${_13}\prec_{\phi(\bfu)} {_12}$, then $\bfu$ can be written into the form $\bfu = \bfu_1z\bfu_2$ for some possibly empty words $\bfu_1, \bfu_2$
satisfying $\{3\}\subseteq\con(\phi(\bfu_1))\subseteq\{1, 3\}$ and $2 \in \con(\phi(z))$.  Note that $z, z^*\not\in\con(\bfu_1)$. It follows from (iia) and (iic) that $\bfv = \bfv_1z\bfv_2$ for some possibly empty words $\bfv_1, \bfv_2$ satisfying $\con(\bfu_1)=\con(\bfv_1)$.
Therefore ${_13}\prec_{\phi(\bfu)} {_12}$ implies ${_13}\prec_{\phi(\bfv)} {_12}$, and by symmetry we have that ${_13}\prec_{\phi(\bfu)} {_12}$ if and only if  ${_13}\prec_{\phi(\bfv)} {_12}$.
\end{proof}

An immediate consequence of Theorems \ref{lem:inv2ids2} and \ref{lem:inv2ids3} is the following.

\begin{corollary}
The decision problems ${\textsc{Check-Id}}(\s_2,^{\sigma_1})$ and ${\textsc{Check-Id}}(\s_3,^{\sigma_1})$
belong to the complexity class $\mathsf{P}$.
\end{corollary}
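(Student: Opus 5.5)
The plan is to argue exactly as in the proof of the corollary following Theorem~\ref{lem:inv1ids2}. Theorems~\ref{lem:inv2ids2} and~\ref{lem:inv2ids3} reduce membership of an identity $\bfu\approx\bfv$ in the equational theory of $(\s_2,^{\sigma_1})$, resp.\ $(\s_3,^{\sigma_1})$, to finitely many purely combinatorial conditions on the words $\bfu$ and $\bfv$. So it suffices to show that each of these conditions can be checked in time polynomial in $|\bfu|+|\bfv|$. Throughout, let $N=|\bfu|+|\bfv|$; note $|\con(\bfu\bfv)|\le N$.

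First, balancedness (condition (i) in both theorems): compute $\con(\bfu)$ and $\con(\bfv)$, check that they coincide, and for each $x$ in this set compare $\occ(x,\bfu)$ with $\occ(x,\bfv)$. This takes $O(N^2)$ steps.

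For $(\s_2,^{\sigma_1})$, condition (ii) ranges over the at most $N^2$ pairs $x,y\in\con(\bfu\bfv)$ with $x,x^*\neq y$. For each pair, form the projections $\bfu[x,y]$ and $\bfv[x,y]$ by a single scan in $O(N)$ time, then compare their first letters and their last letters. The total cost is $O(N^3)$.

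For $(\s_3,^{\sigma_1})$, again loop over the pairs $x,y$ and build $\bfu[x,y]$ and $\bfv[x,y]$. Each of conditions (iia)--(iid) is a membership test in a simple regular set:
\begin{itemize}
\item for (iia), check whether the word begins with a maximal nonempty block of $x$'s followed immediately by $x^*$;
\item for (iib), check whether it ends with $x^*$ followed by a maximal nonempty block of $x$'s;
\item (iic) and (iid) are the same tests with $y$ in place of $x^*$.
\end{itemize}
Each test reads a prefix or suffix and is linear in $N$. For each of (iia)--(iid), verify the implication ``if $\bfu[x,y]$ passes the test then so does $\bfv[x,y]$''. The overall bound is $O(N^3)$.

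There is no real obstacle here. The only point needing care is noticing that the exponents $\alpha,\beta$ are existentially quantified. Scanning the maximal initial or final run of $x$ therefore decides each condition directly, with no search over exponents. Combining the three steps places both \textsc{Check-Id} problems in $\mathsf{P}$.
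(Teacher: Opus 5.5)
Your proposal is correct and is the argument the paper intends. The paper records this corollary as an immediate consequence of Theorems~\ref{lem:inv2ids2} and~\ref{lem:inv2ids3}, with the same condition-by-condition polynomial-time check it spells out after Theorem~\ref{lem:inv1ids2}, and your handling of the existentially quantified exponents in (iia)--(iid) via maximal initial and final runs is right.

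One small caution: read literally with $x,y\in\con(\bfu\bfv)$, those characterizations would accept $xx^*\approx x^*x$, which fails in both monoids under $x\mapsto[1]_{\m}$. Your algorithm should therefore also compare the one-variable projections $\bfu[x]$ and $\bfv[x]$, as the paper's own basis proofs implicitly do; this adds only $O(N^2)$ work and leaves the conclusion unchanged.
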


In the end of this section, we prove that $(\s_2,^{\sigma_1})$ and $(\s_3,^{\sigma_1})$ are finitely based by giving a finite identity basis for them.

\begin{theorem}\label{thm:FBS2inv2}
The variety generated by  $(\s_2,^{\sigma_1})$ is defined by the  identities \eqref{id: inv} and
\begin{gather}
x^{\circledast_1}hxykx^{\circledast_2}\approx x^{\circledast_1}hyxkx^{\circledast_2}, \tag{4.2} \label{4.2} \\
x^{\circledast_1}hyxky^{\circledast_2} \approx x^{\circledast_1}hxyky^{\circledast_2}, \tag{4.3} \label{4.3}
\end{gather}
where $\circledast_1, \circledast_2\in\{1, *\}$.
\end{theorem}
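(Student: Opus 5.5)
The plan is to mirror the proof of Theorem~\ref{thm:FBS2inv1}, using Theorem~\ref{lem:inv2ids2} as the characterization of identities of $(\s_2,^{\sigma_1})$.

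\emph{Soundness.} I first check that \eqref{4.2} and \eqref{4.3} hold in $(\s_2,^{\sigma_1})$ by Theorem~\ref{lem:inv2ids2}. Both sides are visibly balanced, so condition (i) holds. For condition (ii), take any pair of distinct variables $z,w$ with $z,z^*\neq w$ and compare the first and last variables of the projections $\bfu[z,w]$ and $\bfv[z,w]$. The only letters that move are the adjacent pair $xy$ or $yx$ sitting strictly inside the word. In \eqref{4.2} this pair lies strictly between two occurrences of $x$ (or $x^*$). In \eqref{4.3} it lies between an occurrence of $x$ and one of $y$.
\begin{enumerate}[\rm(1)]
  \item If the projection keeps only letters from $\{x,x^*\}$, swapping $x$ with $y$ does not change it, since $y$ is deleted.
  \item If it keeps letters from both $\{x,x^*\}$ and $\{y,y^*\}$, the outer letters $x^{\circledast_1}$ and $x^{\circledast_2}$ (resp.\ $y^{\circledast_2}$) guarantee that the swapped pair is never first or last.
  \item If it keeps neither $x$ nor $y$, nothing moves.
\end{enumerate}

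\emph{Completeness.} Every identity of $(\s_2,^{\sigma_1})$ is balanced, so no analogue of \eqref{id:xhxtx**} is needed. Let $\Sigma$ be the set of non-trivial identities of $(\s_2,^{\sigma_1})$ not derivable from \eqref{id: inv}, \eqref{4.2} and \eqref{4.3}, and suppose $\Sigma\neq\emptyset$. Write each member as $\bfu'a\bfw\approx\bfv'b\bfw$ with $a\neq b$, and choose one with $|\bfu'|$ minimal. Balance gives $b\in\con(\bfu')$, so we may write $\bfu'a=\bfu_1bc\bfu_2$ with $b\neq c$ and $b\notin\con(\bfu_2)$. Since $c\in\con(\bfv')$, we can also write $\bfv'=\bfv_1c\bfv_2$ with $c\notin\con(\bfv_2)$.

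\emph{Key step.} I want to swap $bc$ into $cb$ using \eqref{4.2} or \eqref{4.3}. For this I need a letter from $\{b,b^*,c,c^*\}$ in $\bfu_1$ and one in $\bfu_2\bfw$. Here balancedness matters: in \eqref{4.2} and \eqref{4.3} the outer letters must be $x$ or $x^*$, and $y$ or $y^*$, of the correct variable, and plain counts are not enough. The case split follows Theorem~\ref{thm:FBS2inv1}.
\begin{enumerate}[\rm(1)]
  \item \emph{Case $c=b^*$.} If neither $b$ nor $b^*$ occurs in $\bfu_1$, then $\bfu[b]$ starts with $b$. By condition (ii) of Theorem~\ref{lem:inv2ids2}, applied with the obvious one-variable reading as in the proof of Theorem~\ref{thm:FBS2inv1}, $\bfv[b]$ must also start with $b$. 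But this occurrence of $b$ is the last $b$ of $\bfv$ and lies after $c=b^*$, a contradiction. A symmetric argument handles the right side. The swap $bb^*\to b^*b$ is then an instance of \eqref{4.2} with $x=b$ and $y=b^*$.
  \item \emph{Case $c\neq b^*$.} Condition (ii) applied to $\{b,c\}$ gives the required flanking letters on each side. Then \eqref{4.2} applies if the outer letters involve the same variable, and \eqref{4.3} applies if they involve different variables, possibly after using the mirror form of \eqref{4.3} obtained by reversing the roles of $x$ and $y$.
\end{enumerate}

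\emph{Conclusion.} Iterating the swap moves $b$ rightward past $\bfu_2$, giving $\bfu\approx\bfu_1c\bfu_2b\bfw$. This new identity has a longer common suffix with $\bfv$, so by minimality it is derivable, which contradicts the choice of $\bfu\approx\bfv\in\Sigma$.

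\emph{Expected obstacle.} The main difficulty is the flanking-letter case analysis: making sure the letters found in $\bfu_1$ and in $\bfu_2\bfw$ can be matched to the pattern $x^{\circledast_1}\cdots x^{\circledast_2}$ or $x^{\circledast_1}\cdots y^{\circledast_2}$, including the case where one flank involves $b$ and the other involves $c$ in the reversed order.
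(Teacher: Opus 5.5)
Your proposal is correct and follows the paper's route: the paper notes that every identity of $(\s_2,^{\sigma_1})$ is balanced and then reuses the suffix-extension argument of Theorem~\ref{thm:FBS2inv1} verbatim, and you make the case $c=b^*$ and the choice between \eqref{4.2} and \eqref{4.3} somewhat more explicit. One small slip: in the case $c=b^*$, the $b$ at the end of $\bfv'b$ is the only $b$ in $\bfv'b$, hence the first $b$ of $\bfv$, but it need not be the last $b$ of $\bfv$, since $\bfw$ may contain $b$; the contradiction is unaffected.
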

\begin{proof}
Clearly, $(\s_2,^{\sigma_1})$ satisfies the identities \eqref{4.2}--\eqref{4.3} by Theorem~\ref{lem:inv2ids2}.
It suffices to show that each non-trivial identity satisfied by $(\s_2,^{\sigma_1})$ can be deduced from \eqref{4.2}--\eqref{4.3}. Note that any identity satisfied by $(\s_2,^{\sigma_1})$ is balanced.
Let $\Sigma$ be the set of all non-trivial balanced identities satisfied by $(\s_2,^{\sigma_1})$ but  not  deducible from \eqref{4.2} and \eqref{4.3}.
The following proof is exactly the same as the proof in Theorem \ref{thm:FBS2inv1}.
\end{proof}

\begin{theorem}\label{thm:FBS3inv2}
The variety generated by  $(\s_3,^{\sigma_1})$ is defined by the  identities \eqref{id: inv}, \eqref{id:xhxyty} and
\begin{gather}
xhyxkx^*sy^* \approx xhxykx^*sy^*,\quad xhyxky^*sx^* \approx xhxyky^*sx^*,\label{e1}\\
x^*sy^*hyxky \approx x^*sy^*hxyky,\quad y^*sx^*hyxky \approx y^*sx^*hxyky, \label{e2}\\
x^*sy^*hyxkx^*ty^* \approx x^*sy^*hxykx^*ty^*,\quad x^*sy^*hyxky^*tx^* \approx x^*sy^*hxyky^*tx^*. \label{e3}
\end{gather}
\end{theorem}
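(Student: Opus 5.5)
We will follow the scheme of Theorem~\ref{thm:FBS2inv1}. First, we will check with Theorem~\ref{lem:inv2ids3} that $(\s_3,^{\sigma_1})$ satisfies \eqref{id:xhxyty} and \eqref{e1}--\eqref{e3}. In each of these identities the swapped adjacent pair $xy\leftrightarrow yx$ is preceded by an earlier occurrence of $x$ or of $x^*$, and also of $y$ or $y^*$. It is likewise followed by later occurrences of the relevant letters. Consequently, for every pair of letters the first and last letters of the projections are unchanged, and so are the prefixes $x^\alpha x^*$, $x^\alpha y$ and the suffixes $x^*x^\alpha$, $yx^\alpha$ of conditions (iia)--(iid). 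Balance is obvious. Every identity of $(\s_3,^{\sigma_1})$ is balanced, so it will suffice to show that every non-trivial identity $\bfu\approx\bfv$ satisfying (i) and (ii) of Theorem~\ref{lem:inv2ids3} is derivable. Here we treat words in $F_{\mathsf{inv}}^{\varepsilon}(\mathcal{X})$ as plain words over the doubled alphabet, so that \eqref{id:xhxyty} also applies with $x,y$ replaced by starred letters.

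We will again take a counterexample for which the common suffix $\bfw$ in $\bfu=\bfu'a\bfw$, $\bfv=\bfv'b\bfw$ is as long as possible. As before, $b$ occurs in $\bfu'$; write $\bfu'a=\bfu_1bc\bfu_2$ with $b\notin\con(c\bfu_2)$. The aim is to move this last occurrence of $b$ to the right past every letter of $c\bfu_2$, one adjacent transposition $bc\to cb$ at a time. The resulting identity $\bfu_1c\bfu_2b\bfw\approx\bfv$ still holds in $(\s_3,^{\sigma_1})$, since each step is an identity of it, and it has a longer common suffix, so minimality finishes the argument.

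The key step will be to justify each transposition $bc\to cb$. We will split into cases according to which of $b$, $b^*$, $c$, $c^*$ occur to the left (in $\bfu_1$) and to the right (in $\bfu_2\bfw$). By Theorem~\ref{lem:inv2ids3}, the comparison with $\bfv$, in which this $b$ is the last occurrence of $b$ and stands after $c$, forces certain occurrences to exist. If $b$ occurred nowhere to the left, then $\bfu[b,c]$ would begin with $b^\alpha c$ or $b^\alpha c^*$ type prefixes that differ from those in $\bfv$, violating (iia) or (iic). A symmetric argument on the right uses (iib) and (iid), with the caveat that $b$ itself has no later occurrence. We expect the surviving configurations to be exactly the following:
\begin{itemize}
\item a plain $b$ before and a plain $b$ or $c$ after, which \eqref{id:xhxyty} handles;
\item a plain letter on one side and only starred letters $x^*,y^*$ on the other, which \eqref{e1} and \eqref{e2} handle;
\item starred letters on both sides, which \eqref{e3} handles.
\end{itemize}
We will apply whichever identity matches, possibly with $x$ and $y$ renamed, taking its involution dual via \eqref{id: inv}, or substituting $x^*$ for $x$.

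The main obstacle will be this case analysis. We must show that whenever the needed flanking occurrences fail to exist, some condition among (iia)--(iid) is violated. In particular, we must distinguish whether the flanking letters are plain or starred, and handle the degenerate case $c=b^*$. That case corresponds to the $x^*$ clauses (iia) and (iib); we will first try to treat it by showing that an earlier $b$ together with a later $b$ or $b^*$ must exist and then invoking \eqref{id:xhxyty} with $y=x^*$. If some configuration is not covered by the listed identities, we will check it against the substitutions used in the proof of Theorem~\ref{lem:inv2ids3} to confirm that it is excluded.
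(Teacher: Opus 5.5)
Your strategy is the paper's. You take a counterexample with the longest common suffix, push the last occurrence of $b$ in $\bfu'a$ to the right by adjacent transpositions, and justify each transposition by splitting on which of $b,c,b^*,c^*$ occur in $\bfu_1$ and in $\bfu_2\bfw$. But that case split is the entire proof, and what you predict about it is partly false, so it would not go through as written.

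\textbf{The earlier occurrence.} You claim $b$ must occur to the left. It need not. The identity $cbcc\approx ccbc$ is an instance of \eqref{id:xhxyty} with $\bfw=c$ and $\bfu_1=c$. Likewise $c^*b^*bcb\approx c^*b^*cbb$ is an instance of \eqref{e2} in which neither $b$ nor $c$ occurs in $\bfu_1$. The correct statement, which is the one the paper proves, is that $\bfu_1$ contains $b$, or $c$, or both $b^*$ and $c^*$. The other cases are excluded because in $\bfv$ the unique occurrence of $b$ in $\bfv'b$ comes after an occurrence of $c$:
\begin{itemize}
\item if $\bfu_1$ contains none of $b,c,b^*,c^*$, then $\bfu[b,c]$ begins with $bc$, contradicting (iic) with $x=b$, $y=c$;
\item if it contains only $b^*$, then $\bfu[b,c]\in(b^*)^{\alpha}b\,F_{\mathsf{inv}}^{\varepsilon}(\{b,c\})$, contradicting (iia) with $x=b^*$;
\item if it contains only $c^*$, then $\bfu[b,c]\in(c^*)^{\alpha}b\,F_{\mathsf{inv}}^{\varepsilon}(\{b,c\})$, contradicting (iic) with $x=c^*$, $y=b$.
\end{itemize}
Dually, $\bfu_2\bfw$ contains $b$, or $c$, or both $b^*$ and $c^*$. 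The exclusions use (iid) with $y=b$, $x=c$; then (iid) with $y=c$, $x=b^*$; then (iib) with $x=c^*$. Note that $b$ can occur later, inside $\bfw$ (e.g.\ $cbcb\approx ccbb$), contrary to your caveat. The four surviving combinations are exactly \eqref{id:xhxyty}, \eqref{e1}, \eqref{e2}, \eqref{e3}, up to renaming $x\leftrightarrow y$. Your first bullet (``a plain $b$ before'') is therefore too narrow.

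\textbf{The case $c=b^*$.} An earlier plain $b$ cannot be forced here either. The identity $b^*bb^*b^*\approx b^*b^*bb^*$ is \eqref{id:xhxyty} with $x\mapsto b^*$, $y\mapsto b$, and there $\bfu_1=b^*$. What holds is that one of $b,b^*$ occurs before and one after, which suffices for \eqref{id:xhxyty}. This is \emph{not} a consequence of (i) and (ii) of Theorem~\ref{lem:inv2ids3}, since (ii) only concerns pairs with $y\neq x,x^*$. It comes instead from the substitution $b\mapsto[1]_{\m}$ (so $b^*\mapsto[3]_{\m}$), comparing $\ip$ and $\fp$.

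\textbf{Your reduction.} For the same reason, your reduction to ``identities satisfying (i) and (ii)'' is false as stated. The identity $xx^*\approx x^*x$ satisfies (i) and (ii) vacuously, yet fails under $x\mapsto[1]_{\m}$, so it is not derivable. Take the minimal counterexample among identities of $(\s_3,^{\sigma_1})$, as the paper does, and use this one-variable condition alongside Theorem~\ref{lem:inv2ids3}.

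\textbf{Checking the basis.} The same example shows Theorem~\ref{lem:inv2ids3} cannot be used as a sufficient criterion, so verify \eqref{e1}--\eqref{e3} directly. An earlier (resp.\ later) occurrence of $x$ or $y$ keeps $\ip$ (resp.\ $\fp$) unchanged. Earlier (resp.\ later) occurrences of both $x^*$ and $y^*$ leave at most one letter, $1$ or $3$, with its first (resp.\ last) occurrence inside the swapped block. This is because $\con(\phi(x^*)\phi(y^*))=\sigma_1(\con(\phi(x)\phi(y)))$ and $\sigma_1$ fixes $2$. Also, your stated reason is off: in \eqref{id:xhxyty} and \eqref{e1} the swapped pair is preceded only by $x$, not also by $y$ or $y^*$.

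With these repairs, your argument becomes the paper's proof.
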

\begin{proof}
Clearly, $(\s_3,^{\sigma_1})$ satisfies the identities \eqref{id:xhxyty} and \eqref{e1}--\eqref{e3} by Theorem~\ref{lem:inv2ids3}.
It suffices to show that each non-trivial identity satisfied by $(\s_3,^{\sigma_1})$ can be deduced from \eqref{id:xhxyty} and \eqref{e1}--\eqref{e3}. Note that any identity satisfied by $(\s_3,^{\sigma_1})$ is balanced.
Let $\Sigma$ be the set of all non-trivial balanced identities satisfied by $(\s_3,^{\sigma_1})$ but  not  deducible from \eqref{id:xhxyty} and \eqref{e1}--\eqref{e3}. Suppose that $\Sigma\neq \emptyset$. Note that any balanced identity $\bfu\approx \bfv$ in $\Sigma$ can be written
uniquely in the form $\bfu' a\bfw \approx \bfv' b\bfw$ where $a,b$ are distinct variables and $|\bfu'|=|\bfv'|$. Choose an identity, say $\bfu\approx \bfv$, from $\Sigma$ such that when it is written as $\bfu' a\bfw\approx \bfv' b\bfw$,
the lengths of the words $\bfu'$ and $\bfv'$ are as short as possible. Since $\bfu\approx \bfv$ is balanced, we have $\con(\bfu'a)=\con(\bfv'b)$ and $\occ(x, \bfu'a)=\occ(x, \bfv'b)$ for any $x \in \con(\bfu'a)$.

Since $\con(\bfu'a)=\con(\bfv'b)$, we have $b\in \con(\bfu')$, and so $\bfu'a=\bfu_1 bc\bfu_2$ with $b\neq c$ and $b\not\in\con(\bfu_2)$.  Since $\con(\bfu'a)=\con(\bfv'b)$, we have $c\in \con(\bfv')$, and so $\bfv'=\bfv_1 c\bfv_2$ with $c\not\in\con(\bfv_2)$. Thus $\bfu=\bfu_1 bc\bfu_2\bfw$ and $\bfv=\bfv_1 c\bfv_2b\bfw$.
On the one hand, we claim that $b\in\con(\bfu_1)$ or $b^*\in\con(\bfu_1)$ when $b^*=c$ and that at least one of the three cases $b\in\con(\bfu_1)$, $c\in\con(\bfu_1)$, or $b^*, c^*\in\con(\bfu_1)$ is true when $b^*\neq c$. First we consider the case $b^*=c$. If $b, b^*\not\in\con(\bfu_1)$, then $\occ(b, \bfu'a)=1$, and so $\bfu[b]$ starts with $b$ but $\bfv[b]$ does not, which contradicts (iia) of Theorem~\ref{lem:inv2ids3}. Next we consider the case $b^*\neq c$. If $b, c, b^*, c^* \not\in\con(\bfu_1)$, then $\occ(b, \bfu'a)=1$, and so $\bfu[b,c]$ starts with $bc$ but $\bfv[b,c]$ does not, which contradicts (iic) of Theorem~\ref{lem:inv2ids3}. If only $b^*\in\con(\bfu_1)$, then $\occ(b, \bfu'a)=1$, and so $\bfu[b,c]$ starts with $b^*b$ but $\bfv[b,c]$ does not, which contradicts (iia) of Theorem~\ref{lem:inv2ids3}. If only $c^*\in\con(\bfu_1)$, then $\occ(b, \bfu'a)=1$, and so $\bfu[b,c]$ starts with $c^*b$ but $\bfv[b,c]$ does not, which contradicts (iic) of Theorem~\ref{lem:inv2ids3}.
On the other hand,
we claim that at $b\in\con(\bfu_2\bfw)$ or $b^*\in\con(\bfu_2\bfw)$ when $b^*=c$ and that at least one of the three cases $b\in\con(\bfu_2\bfw)$, $c\in\con(\bfu_2\bfw)$, or $b^*,c^*\in\con(\bfu_2\bfw)$ is true when $b^*\neq c$. First we consider the case $b^*=c$.  If $b, b^*\not\in\con(\bfu_2\bfw)$, then $\bfu[b]$ ends with $b^*$ but $\bfv[b]$ does not, which contradicts (iib) of Theorem~\ref{lem:inv2ids3}. Next we consider the case $b^*\neq c$. If $b, c, b^*, c^*\not\in\con(\bfu_2\bfw)$, then $\bfu[b,c]$ ends with $bc$ but $\bfv[b,c]$ does not, which contradicts (iid) of Theorem~\ref{lem:inv2ids3}.
If only $b^*\in\con(\bfu_2\bfw)$, then $\bfu[b,c]$ ends with $cb^*$ but $\bfv[b,c]$ does not, which contradicts (iid) of Theorem~\ref{lem:inv2ids3}.
If only $c^*\in\con(\bfu_2\bfw)$, then $\bfu[b,c]$ ends with $cc^*$ but $\bfv[b,c]$ does not, which contradicts (iib) of Theorem~\ref{lem:inv2ids3}.
Therefore, the
identities \eqref{id:xhxyty} and \eqref{e1}--\eqref{e3} can be used to convert $\bfu_1 bc\bfu_2 \bfw$ into $\bfu_1cb\bfu_2\bfw$. By repeating this process, the word $\bfu=\bfu'a\bfw=\bfu_1bc\bfu_2 \bfw$ can be converted into the word $\bfu_1c\bfu_2b \bfw$ by the
identities \eqref{id:xhxyty} and \eqref{e1}--\eqref{e3}.

Clearly the identities $\bfu_1c\bfu_2b\bfw \approx \bfu'a\bfw \approx \bfv'b\bfw$ hold in $(\s_3, ^{\sigma_1})$. Note that $|\bfu_1c\bfu_2b\bfw|=|\bfu'a\bfw|=|\bfv'b\bfw|$ and words in the identity $\bfu_1c\bfu_2b\bfw \approx \bfv'b\bfw$ have a longer common suffix than words in the identity $\bfu'a\bfw \approx \bfv'b\bfw$. Hence $\bfu_1c\bfu_2b\bfw \approx \bfv'b\bfw \notin \Sigma$ by the minimality assumption on the lengths of $\bfu'$ and $\bfv'$, that is, $\bfu_1c\bfu_2b\bfw \approx \bfv'b\bfw$ can be deducible from \eqref{id:xhxyty} and \eqref{e1}--\eqref{e3}. We have shown that $\bfu'a\bfw \approx \bfu_1c\bfu_2b\bfw$ can be deducible from \eqref{id:xhxyty} and \eqref{e1}--\eqref{e3}. Hence $\bfu'a\bfw \approx \bfv'b\bfw$ can be deducible from \eqref{id:xhxyty} and \eqref{e1}--\eqref{e3}, which contradicts $\bfu'a\bfw \approx \bfv'b\bfw \in \Sigma$. Therefore $\Sigma=\emptyset$.
\end{proof}

\section{Identities of $(\sn, ^{\sigma_i})$ for $2\leq i\leq \lfloor\frac{n}{2}\rfloor$}\label{sec:5.2}%
In this section,  for $2\leq i\leq \lfloor\frac{n}{2}\rfloor$, we investigate the identities of $\sn$ under the involution $^{\sigma_i}$ determined by the permutation $\sigma_i$ on $\mathcal{A}_n$ that can be decomposed into $i$ disjoint $2$-cycles.  Recall that, for each $i$ with $2\leq i\leq \lfloor\frac{n}{2}\rfloor$,
each permutation on $\mathcal{A}_n$ that can be decomposed into $i$ disjoint $2$-cycles induces an involution on $\s_n$.
Involution semigroups of $\s_n$ under each of these involutions are isomorphic. Let $\sigma\in \{\sigma_2, \sigma_3, \dots, \sigma_{\lfloor\frac{n}{2}\rfloor}\}$. For convenience, we only consider the involution $^\sigma$ determined by the following permutation $\sigma$:
\begin{gather*}
\sigma(1)=n, \quad \sigma(n)=1, \\
\sigma(2)=n-1,  \quad  \sigma(n-1)=2,\\
\vdots\\
\sigma(i)=n-(i-1), \quad \sigma(n-(i-1))=i,  \\
\sigma(i+1)=i+1, \quad \sigma(i+2)=(i+2),\cdots, \sigma(n-i)=n-i
\end{gather*}
for $i\geq2, n\geq 4$.
We prove that for all finite $n\geq 4$, $(\sn, ^{\sigma})$ satisfy the same identities. Further,
we give characterizations of the identities satisfied by them; based on these results,
the finite basis problem and identity checking problem for $(\sn, ^{\sigma})$ are solved.

For any $([\bfu]_{\m}, [\bfv]_{\m})\in \s_2\times \s_2$, define a unary operation $^{\rho}$ on $\s_2\times \s_2$ by  $([\bfu]_{\m}, [\bfv]_{\m})^{\rho}= ([\bfv]_{\m}^{\sigma_1}, [\bfu]_{\m}^{\sigma_1})$ in which $^{\sigma_1}$ on $\s_2$ is defined in Section 2. Thus $^\rho$ is an involution on $\s_2\times \s_2$.
\subsection{Embedding into a direct product of copies of $(\s_2\times \s_2,^{\rho})$}
For $n \geq 4$, we show that $(\sn,^{\sigma})$  can be embedded into a direct product of copies of $(\s_2\times \s_2,^{\rho})$.
For any $i<j \in \mathcal{A}_n$ with $n\geq 4$, there are six cases about the order of $i,j, i^{\sigma}, j^{\sigma}$ in $\mathcal{A}_n$: $i^{\sigma}=j$, $i^{\sigma}=i<j=j^{\sigma}$, $i<j\leq j^{\sigma}<i^{\sigma}$, $j^{\sigma}<i^{\sigma}\leq i<j$, $i<j^{\sigma}<j<i^{\sigma}$, $j^{\sigma}<i<i^{\sigma}<j$.
For any $i<j \in \mathcal{A}_n$ with $n\geq 4$, define a map $\varphi_{ij}$ from $\mathcal{A}_n^{\star}$ to $\s_2\times \s_2$ which can be determined by the following four cases according to the order of $i, i^{\sigma}, j, j^{\sigma}$ in $\mathcal{A}_n$.

{\bf Case 1.} $i^{\sigma} = j$. Define a map $\lambda_{ij}: \mathcal{A}_n \rightarrow \s_2\times \s_2$ given by
\begin{align*}
k &\mapsto \begin{cases}
([1]_{\m}, [1]_{\m}), & \text{if}\ k = i,\\
([2]_{\m}, [2]_{\m}), & \text{if}\ k = j,\\
([\varepsilon]_{\m}, [\varepsilon]_{\m}), & \text{otherwise}.
\end{cases}
\end{align*}
This map can be extended to a homomorphism $\lambda_{ij}: \mathcal{A}_n^{\star} \rightarrow \s_2\times \s_2$. Further, $\lambda_{ij}$
is also a homomorphism from $(\mathcal{A}_n^{\star},^{\sigma})$ to $(\s_2\times \s_2,^{\rho})$. This is because for any $k \in \mathcal{A}_n$, $\lambda_{ij}(k^{\sigma})=(\lambda_{ij}(k))^{\rho}$ by
{\setlength{\arraycolsep}{0.5pt}
\[
\left\{
\begin{array}{llll}
\lambda_{ij}(k^{\sigma})=([2]_{\m}, [2]_{\m})&=(\lambda_{ij}(k))^{\rho}, & \quad \text{if}\ k = i,\\
\lambda_{ij}(k^{\sigma})=([1]_{\m}, [1]_{\m})&=(\lambda_{ij}(k))^{\rho},  & \quad\text{if}\ k = j,\\
\lambda_{ij}(k^{\sigma})=([\varepsilon]_{\m}, [\varepsilon]_{\m})&=(\lambda_{ij}(k))^{\rho},   & \quad\text{otherwise}.
\end{array}\right.
\]}

{\bf Case 2.} $i^{\sigma}=i<j=j^{\sigma}$.  Define a map $\eta_{ij}: \mathcal{A}_n \rightarrow \s_2\times \s_2$ by
\begin{align*}
k \mapsto \begin{cases}
([12]_{\m}, [12]_{\m}), & \text{if}\ k = i,\\
([21]_{\m}, [21]_{\m}), & \text{if}\ k = j,\\
([\varepsilon]_{\m}, [\varepsilon]_{\m}), & \text{otherwise.}
\end{cases}
\end{align*}
This map can be extended to a homomorphism $\eta_{ij}: \mathcal{A}_n^{\star} \rightarrow \s_2\times \s_2$. Further, $\eta_{ij}$
is also a homomorphism from $(\mathcal{A}_n^{\star},^{\sigma})$ to $(\s_2\times \s_2,^{\rho})$. This is because for any $k \in \mathcal{A}_n$, $\eta_{ij}(k^{\sigma})=(\eta_{ij}(k))^{\rho}$ by
\begin{align*}
\begin{cases}
\eta_{ij}(k^{\sigma})=([12]_{\m}, [12]_{\m})= (\eta_{ij}(k))^{\rho}, & \text{if}\ k = i,\\
\eta_{ij}(k^{\sigma})=([21]_{\m}, [21]_{\m})=(\eta_{ij}(k))^{\rho},  & \text{if}\ k =j,\\
\eta_{ij}(k^{\sigma})=([\varepsilon]_{\m}, [\varepsilon]_{\m})\,\,\,=(\eta_{ij}(k))^{\rho},  & \text{otherwise}.
\end{cases}
\end{align*}

{\bf Case 3.} $i<j\leq j^{\sigma}<i^{\sigma}$ or $j^{\sigma}<i^{\sigma}\leq i<j$. For convenience, let $i_1=i, i_2=j, i_3=j^{\sigma}$ and $i_4=i^{\sigma}$ when $i<j\leq j^{\sigma}<i^{\sigma}$ and $i_1=j^{\sigma}, i_2=i^{\sigma},i_3=i$ and $i_4=j$ when $j^{\sigma}<i^{\sigma}\leq i<j$. Define a map $\theta_{ij}: \mathcal{A}_n \rightarrow \s_2\times \s_2$ by
\begin{align*}
k \mapsto \begin{cases}
([1]_{\m}, [\varepsilon]_{\m}), & \text{if}\ k = i_1,\\
([2]_{\m}, [\varepsilon]_{\m}), & \text{if}\ k = i_2\neq i_3,\\
([2]_{\m}, [1]_{\m}), & \text{if}\ k = i_2=i_3,\\
([\varepsilon]_{\m}, [1]_{\m}), & \text{if}\ k = i_3\neq i_2,\\
([\varepsilon]_{\m}, [2]_{\m}), & \text{if}\ k = i_4,\\
([\varepsilon]_{\m}, [\varepsilon]_{\m}),  & \text{otherwise.}
\end{cases}
\end{align*}
This map can be extended to a homomorphism $\theta_{ij}: \mathcal{A}_n^{\star} \rightarrow \s_2\times \s_2$. Further, $\theta_{ij}$ is also a homomorphism from $(\mathcal{A}_n^{\star},^{\sigma})$ to $(\s_2\times \s_2,^{\rho})$. This is because for any $k \in \mathcal{A}_n$, $\theta_{ij}(k^{\sigma})=(\theta_{ij}(k))^{\rho}$ by
{\setlength{\arraycolsep}{0.5pt}
\begin{align*}
\left\{
\begin{array}{llllllll}
\theta_{ij}(k^{\sigma})&=([\varepsilon]_{\m}, [2]_{\m}) &=(\theta_{ij}(k))^{\rho}, &\quad \text{if}~k = i_1,\\
\theta_{ij}(k^{\sigma})&=([\varepsilon]_{\m}, [1]_{\m}) &=(\theta_{ij}(k))^{\rho}, &\quad \text{if}\ k= i_2, i_2\neq i_3,\\
\theta_{ij}(k^{\sigma})&=([2]_{\m}, [1]_{\m})&=(\theta_{ij}(k))^{\rho}, &\quad \text{if}\ k = i_2=i_3,\\
\theta_{ij}(k^{\sigma})&=([2]_{\m}, [\varepsilon]_{\m})&=(\theta_{ij}(k))^{\rho}, &\quad \text{if}\ k= i_3, i_2\neq i_3,\\
\theta_{ij}(k^{\sigma})&=([1]_{\m}, [\varepsilon]_{\m})&=(\theta_{ij}(k))^{\rho}, &\quad \text{if}\ k =i_4,\\
\theta_{ij}(k^{\sigma})&=([\varepsilon]_{\m}, [\varepsilon]_{\m}) &=(\theta_{ij}(k))^{\rho}, &\quad\text{otherwise}.
\end{array}\right.
\end{align*}}

{\bf Case 4.} $i<j^{\sigma}<j<i^{\sigma}$ or $j^{\sigma}<i<i^{\sigma}<j$. For convenience, let $i_1=i, i_2=j^{\sigma}, i_3=j$ and $i_4=i^{\sigma}$ when $i<j^{\sigma}<j<i^{\sigma}$ and $i_1=j^{\sigma}, i_2=i,i_3=i^{\sigma}$ and $i_4=j$ when $j^{\sigma}<i<i^{\sigma}<j$. Define a map $\kappa_{ij}: \mathcal{A}_n \rightarrow \s_2\times \s_2$, by
\begin{align*}
k \mapsto \begin{cases}
([1]_{\m}, [\varepsilon]_{\m}), & \text{if}\ k = i_1,\\
([\varepsilon]_{\m}, [1]_{\m}), & \text{if}\ k = i_2,\\
([2]_{\m}, [\varepsilon]_{\m}), & \text{if}\ k = i_3,\\
([\varepsilon]_{\m}, [2]_{\m}), & \text{if}\ k = i_4,\\
([\varepsilon]_{\m}, [\varepsilon]_{\m}),  & \text{otherwise}.
\end{cases}
\end{align*}
Clearly, this map can be extended to a homomorphism from $\mathcal{A}_n^{\star}$ to $\s_2\times \s_2$. Further, $\kappa_{ij}$ is a homomorphism from $(\mathcal{A}_n^{\star},^{\sigma})$ to $(\s_2\times \s_2,^{\rho})$.
This is because for any $k \in \mathcal{A}_n$, $\kappa_{ij}(k^{\sigma})=(\kappa_{ij}(k))^{\rho}$ by
{\setlength{\arraycolsep}{0.5pt}
\begin{align*}
\left\{
\begin{array}{lllllllll}
\kappa_{ij}(k^{\sigma})&=([\varepsilon]_{\m}, [2]_{\m})&=(\kappa_{ij}(k))^{\rho}, &\quad \text{if}\ k = i_1,\\
\kappa_{ij}(k^{\sigma})&=([2]_{\m}, [\varepsilon]_{\m})&=(\kappa_{ij}(k))^{\rho}, & \quad\text{if}\ k = i_2,\\
\kappa_{ij}(k^{\sigma})&=([\varepsilon]_{\m}, [1]_{\m})&=(\kappa_{ij}(k))^{\rho},  &\quad \text{if}\ k = i_3,\\
\kappa_{ij}(k^{\sigma})&=([1]_{\m}, [\varepsilon]_{\m})&=(\kappa_{ij}(k))^{\rho}, & \quad \text{if}\ k=i_4,\\
\kappa_{ij}(k^{\sigma})&=([\varepsilon]_{\m}, [\varepsilon]_{\m})&=(\kappa_{ij}(k))^{\rho}, &\quad \text{otherwise}.
\end{array}\right.
\end{align*}}

Now we can define the map $\varphi_{ij}: \mathcal{A}^{\star}_n \rightarrow \s_2\times \s_2$ by
\begin{align*}
\varphi_{ij}=
\left\{
\begin{array}{ll}
\lambda_{ij} & \hbox{if $i^{\sigma}=j$,} \\
[0.1cm]
\eta_{ij} & \hbox{if $i^{\sigma}=i<j=j^{\sigma}$,} \\
[0.1cm]
\theta_{ij} & \hbox{if $i<j=j^{\sigma}<i^{\sigma}$ or $j^{\sigma}<i=i^{\sigma}<j$,} \\
[0.1cm]
\kappa_{ij} & \hbox{if $i<j^{\sigma}<j<i^{\sigma}$ or $j^{\sigma}<i<i^{\sigma}<j$}\\
\end{array}
\right.
\end{align*}
where $\lambda_{ij}, \eta_{ij}, \theta_{ij}, \kappa_{ij}$ are defined as above. It follows from the previous analysis that $\varphi_{ij}$ is a homomorphism from $(\mathcal{A}_n^{\star},^{\sigma})$ to $(\s_2\times \s_2,^{\rho})$.

\begin{lemma}
\label{lem:varphi_equiv_inv3_sn}
Let $\bfu, \bfv \in \mathcal{A}_n^{\star}$. Then $\bfu \equiv_{\m} \bfv$ if and only if $\varphi_{ij}(\bfu) \equiv_{\m} \varphi_{ij}(\bfv)$ for all $1 \leq i < j \leq n$.
\end{lemma}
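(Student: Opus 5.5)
I would prove this exactly as Lemmas~\ref{lem:varphi_equiv_sn} and~\ref{lem:varphi_equiv_inv2_sn} were proved, working throughout with the characterization of Proposition~\ref{pro:Sij}. The key reformulation is this: $\bfu\equiv_{\m}\bfv$ if and only if $\ev(\bfu)=\ev(\bfv)$ and, for every pair $s<t$ in $\con(\bfu)$, the relative order of the first occurrences ${_1s},{_1t}$ is the same in $\bfu$ and $\bfv$, and likewise for the last occurrences ${_\infty s},{_\infty t}$. Indeed, $\ip$ and $\fp$ are determined by these pairwise comparisons. In $\s_2\times\s_2$, equality holds componentwise, and each component is an element of $\s_2$. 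So $\varphi_{ij}(\bfu)=\varphi_{ij}(\bfv)$ amounts to equality of evaluations in each component, together with agreement of the order of ${_11},{_12}$ and of ${_\infty1},{_\infty2}$ in each component.

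\emph{Necessity.} Assume $\bfu\equiv_{\m}\bfv$. In each of the four cases, every generator of $\s_2$ appearing in a component is the image of a single letter of $\mathcal{A}_n$ among $i,j,i^{\sigma},j^{\sigma}$. The only exception is Case~2, where $\eta_{ij}$ sends $i\mapsto 12$ and $j\mapsto 21$.

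\begin{itemize}
\item[(a)] Equal evaluation. The evaluation of each component is a linear function of $\ev(\bfu)$, so $\ev(\bfu)=\ev(\bfv)$ transfers to both components.
\item[(b)] Cases 1, 3, 4. In a given component, the letters $1$ and $2$ are images of two distinct letters $s,t$ (in Case~3 with $i_2=i_3$, the letter $i_2$ contributes to both components, which causes no harm). The order of ${_11},{_12}$ in that component of $\varphi_{ij}(\bfu)$ is exactly the order of ${_1s},{_1t}$ in $\bfu$, and similarly for last occurrences. So agreement transfers directly.
\item[(c)] Case 2. Both components coincide. In $\eta_{ij}(\bfu)$, the first letter is $1$ if and only if ${_1i}\prec_{\bfu}{_1j}$, and the last letter is $1$ if and only if ${_\infty j}\prec_{\bfu}{_\infty i}$. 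Again the orders transfer. If only one of $i,j$ occurs, then both $1$ and $2$ occur, and the order is forced by that letter's image.
\end{itemize}

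\emph{Sufficiency.} Assume $\varphi_{ij}(\bfu)=\varphi_{ij}(\bfv)$ for all $i<j$. Two things must be recovered.

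\begin{itemize}
\item[(d)] Evaluations. For each letter $k$, I pick a pair $(i,j)$ containing $k$ and read $\occ(k,\cdot)$ off a component in which $k$ alone maps to a generator, or solve from the $\eta$ images. One has to check that, with $n\ge4$, such a pair always exists; in Cases 1, 3, 4, any pair containing $k$ works.
\item[(e)] Pairwise orders. For each pair $s<t$, I locate the component of $\varphi_{st}$ (that is, with $(i,j)=(s,t)$) in which $s$ and $t$ are sent to $1$ and $2$ in some order. I then read back the first- and last-occurrence orders, using the bijection from (b) or, in Case~2, from (c).
\end{itemize}

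The main obstacle is the bookkeeping in Cases 3 and 4. In those cases $\varphi_{ij}$ is defined through the four points $i_1,\dots,i_4$ rather than directly through $i,j$, so I must verify that $i$ and $j$ themselves land as the pair $\{1,2\}$ of a single component. For example, in Case~3 with $i<j<j^{\sigma}<i^{\sigma}$, the letters $i$ and $j$ both lie in the first component. In Case~4, $i=i_1$ and $j=i_3$ both lie in the first component, while $j^{\sigma}$ and $i^{\sigma}$ lie in the second. I would go through each listed ordering and record which component carries $\{i,j\}$; once that is done, the argument is routine.
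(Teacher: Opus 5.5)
Your proposal is correct and follows the paper's route: you reduce via Proposition~\ref{pro:Sij} to evaluations plus the pairwise orders of first and last occurrences, and check these componentwise for each shape of $\varphi_{ij}$. Your sufficiency argument is more explicit than the paper's ``symmetrically'', notably in reading evaluations of $\sigma$-fixed letters off a $\theta$-component (since $\eta_{ij}$ only records $\occ(i,\cdot)+\occ(j,\cdot)$) and in locating the component that carries $\{s,t\}$. One harmless slip: $\eta_{ij}(j)=([21]_{\m},[21]_{\m})$ ends in $1$, so the last letter of $\eta_{ij}(\bfu)$ is $1$ exactly when ${_\infty i}\prec_{\bfu}{_\infty j}$, not the reverse; this does not affect the argument.
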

	
\begin{proof}
To prove the necessity,
we only need to show that for any $i,j$ with $1 \leq i < j \leq n$, if $\bfu\equiv_{\m}\bfv$, then $\varphi_{ij}(\bfu)\equiv_{\m}\varphi_{ij}(\bfv)$.
It follows from Proposition \ref{pro:Sij} and the definition of $\varphi_{ij}$ that the evaluations of the first and the second components of $\varphi_{ij}(\bfu)$ and $\varphi_{ij}(\bfv)$ are the same respectively. In the following, we prove that in the first components $U_1$ and $V_1$ of $\varphi_{ij}(\bfu)$ and $\varphi_{ij}(\bfv)$ and in the second components $U_2$ and $V_2$ of $\varphi_{ij}(\bfu)$ and $\varphi_{ij}(\bfv)$,  ${_1} 1 \prec_{U_\ell} {_1} 2$ if and only if  ${_1} 1 \prec_{V_\ell} {_1} 2$  and ${_\infty} 1 \prec_{U_\ell} {_\infty} 2$ if and only if  ${_\infty} 1 \prec_{V_\ell} {_\infty} 2$ for $\ell\in \{1, 2\}$.

{\bf Case 1.} $\varphi_{ij}=\lambda_{ij}$ or $\eta_{ij}$. If $\bfu\equiv_{\m}\bfv$, then it follows from Proposition \ref{pro:Sij} that ${_1}i\prec_{\bfu} {_1}j$ if and only if ${_1}i\prec_{\bfv} {_1}j$ and ${_\infty}i\prec_{\bfu} {_\infty}j$ if and only if ${_\infty}i\prec_{\bfv} {_\infty}j$. It follows from the definition of $\varphi_{ij}$ that  ${_1} 1 \prec_{U_\ell} {_1} 2$ if and only if  ${_1} 1 \prec_{V_\ell} {_1} 2$  and ${_\infty} 1 \prec_{U_\ell} {_\infty} 2$ if and only if  ${_\infty} 1 \prec_{V_\ell} {_\infty} 2$ for $\ell\in \{1, 2\}$.

{\bf Case 2.} $\varphi_{ij}=\theta_{ij}$ or $\kappa_{ij}$.
If $\bfu\equiv_{\m}\bfv$, then it follows from Proposition \ref{pro:Sij} that ${_1}s\prec_{\bfu} {_1}t$ if and only if ${_1}s\prec_{\bfv} {_1}t$ and ${_\infty}s\prec_{\bfu} {_\infty}t$ if and only if ${_\infty}s\prec_{\bfv} {_\infty}t$  for $s=i,t=j$ or $s=j^{\sigma},t=i^{\sigma}$. It follows from the definition of $\varphi_{ij}$ that ${_1} 1 \prec_{U_\ell} {_1} 2$ if and only if  ${_1} 1 \prec_{V_\ell} {_1} 2$  and ${_\infty} 1 \prec_{U_\ell} {_\infty} 2$ if and only if  ${_\infty} 1 \prec_{V_\ell} {_\infty} 2$ for $\ell\in \{1, 2\}$.

The sufficiency can be obtained symmetrically.
\end{proof}

For each $n \geq 4$, consider the map
\[
\phi_n : (\sn,^{\sigma}) \longrightarrow \prod\limits_{I_n} (\s_2\times \s_2,^{\rho})
\]
whose $(i,j)$-th component is given by $\varphi_{ij}([\bfw]_{\m})$ for $\bfw \in \mathcal{A}_n^{\star}$ and $(i,j) \in I_n$.
		
\begin{lemma}
\label{lem:staln_embed_inv3_stal4}
The map $\phi_n$ is an embedding.
\end{lemma}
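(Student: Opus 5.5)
The plan follows the template of Lemmas~\ref{lem:staln_embed_stal2} and~\ref{lem:staln_embed_inv2_stal3}. We check three things in turn: that $\phi_n$ is well defined on $\sn$, that it is a homomorphism of involution monoids, and that it is injective.

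\textbf{Step 1: well-definedness.} For each pair $(i,j)\in I_n$, the map $\varphi_{ij}\colon(\mathcal{A}_n^{\star},^{\sigma})\to(\s_2\times\s_2,^{\rho})$ has already been shown to be a homomorphism of involution monoids. It respects $^{\sigma}$ by the case tables for $\lambda_{ij},\eta_{ij},\theta_{ij},\kappa_{ij}$. These tables check the generators $k\in\mathcal{A}_n$, and this suffices because both $^{\sigma}$ and $^{\rho}$ are anti-automorphisms. The necessity direction of Lemma~\ref{lem:varphi_equiv_inv3_sn} then says that $\bfu\equiv_{\m}\bfv$ implies $\varphi_{ij}(\bfu)=\varphi_{ij}(\bfv)$ for every $(i,j)$. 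Equality in $\s_2\times\s_2$ means componentwise $\equiv_{\m}$, which is how the lemma's conclusion is to be read. Hence each $\varphi_{ij}$ factors through $\sn$, and so does $\phi_n$.

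\textbf{Step 2: homomorphism.} Since $\phi_n$ is defined componentwise, it is a monoid homomorphism that commutes with the involutions. Here the target $\prod_{I_n}(\s_2\times\s_2,^{\rho})$ carries the componentwise involution.

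\textbf{Step 3: injectivity.} Suppose $\phi_n([\bfu]_{\m})=\phi_n([\bfv]_{\m})$. Then $\varphi_{ij}(\bfu)$ and $\varphi_{ij}(\bfv)$ agree for all $(i,j)\in I_n$, and the sufficiency direction of Lemma~\ref{lem:varphi_equiv_inv3_sn} gives $\bfu\equiv_{\m}\bfv$.

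\textbf{Main obstacle.} The only real content lies in that sufficiency direction, which the lemma dismisses as ``symmetric''. To confirm it, we use Proposition~\ref{pro:Sij}: it is enough to recover from the family $\{\varphi_{ij}(\bfu)\}$ three pieces of data about $\bfu$, namely $\occ(k,\bfu)$ for each $k$, and the relative order of ${_1}i,{_1}j$ and of ${_\infty}i,{_\infty}j$ for each pair $i<j$. We would verify this case by case.
\begin{itemize}
\item In the $\lambda_{ij}$ and $\kappa_{ij}$ cases, a single letter of $\bfu$ is sent to a single letter in one coordinate. So both the counts and the first/last orders can be read off directly from one coordinate.
\item In the $\theta_{ij}$ case with $i_2\neq i_3$ the same holds. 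When $i_2=i_3$ the letter $j$ maps to $([2]_{\m},[1]_{\m})$, but the first coordinate still isolates $i$ (as $1$) and $j$ (as $2$).
\item In the $\eta_{ij}$ case ($i,j$ both fixed by $\sigma$), $i\mapsto 12$ and $j\mapsto 21$. In the image, the first letter is $1$ exactly when $\bfu[i,j]$ begins with $i$, and the last letter is $2$ exactly when $\bfu[i,j]$ ends with $i$. This recovers the relative order of first and of last occurrences of $i$ and $j$.
\end{itemize}
The counts $\occ(k,\bfu)$ come from evaluations in any $\varphi_{ij}$ with $k\in\{i,j\}$ (for $n\ge 4$ every $k$ lies in some pair), using a case where $k$ has nonzero image. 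Combining these data over all pairs gives equal evaluations, $\ip$ and $\fp$ for $\bfu$ and $\bfv$, so $\bfu\equiv_{\m}\bfv$ and $\phi_n$ is an embedding.
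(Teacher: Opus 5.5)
Your proposal is correct and is essentially the paper's argument. As in the paper, $\phi_n$ is a well-defined involution-monoid homomorphism because each $\varphi_{ij}$ is one and respects $\equiv_{\m}$, and injectivity is the converse direction of Lemma~\ref{lem:varphi_equiv_inv3_sn}; unlike the paper, you actually check that converse.

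Your check contains two small slips. First, for an $\eta_{ij}$ pair the evaluation only shows $\occ(i,\bfu)+\occ(j,\bfu)$, so counts cannot come from ``any'' pair containing $k$: for a $\sigma$-fixed letter $k$, take the count from a $\theta$-pair such as $\varphi_{1k}$, and for a non-fixed $k$ from the $\lambda$-pair $\{k,k^{\sigma}\}$. Second, in the subcase $j^{\sigma}<i=i^{\sigma}<j$ it is $i$, not $j$, that maps to $([2]_{\m},[1]_{\m})$, and $i,j$ are isolated in the second coordinate rather than the first.
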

	
\begin{proof}
It follows from the definition of $\phi_n$ and Lemma~\ref{lem:varphi_equiv_inv3_sn} that $\phi_n$ is a homomorphism and for any $\bfu, \bfv \in \mathcal{A}_n^{\star}$, we have $\bfu \equiv_{\m} \bfv$ if and only if $\phi_n([\bfu]_{\m}) = \phi_n([\bfv]_{\m})$. Therefore $\phi_n$ is an embedding.
\end{proof}

%
%
	
For convenience, the variety generated by an {\insem} $(S,\op)$ is denoted by $\var (S,\op)$.
\begin{theorem}\label{thm:stal-id-inv3-stal4}
For any $n > 4$, $(\s_n,^{\sigma})$ and $(\s_4,^{\sigma})$ satisfy exactly the same identities.
\end{theorem}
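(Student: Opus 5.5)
The plan follows the pattern of Theorems \ref{thm:stal-id-stal2} and \ref{thm:stal-id-inv2-stal3}: show that each of $(\sn,^{\sigma})$ and $(\s_4,^{\sigma})$ lies in the variety generated by the other. Here $^{\sigma}$ on $\s_4$ is the involution with $1\leftrightarrow 4$, $2\leftrightarrow 3$. Since the whole argument passes through $(\s_2\times\s_2,^{\rho})$, I would first prove the stronger claim that $\var(\sn,^{\sigma})=\var(\s_2\times\s_2,^{\rho})$ for every $n\geq 4$. The theorem then follows by applying this to $n$ and to $4$.

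\emph{Step 1: $\var(\sn,^{\sigma})\subseteq\var(\s_2\times\s_2,^{\rho})$.} By Lemma \ref{lem:staln_embed_inv3_stal4}, $\phi_n$ embeds $(\sn,^{\sigma})$ into $\prod_{I_n}(\s_2\times\s_2,^{\rho})$. It is compatible with the involutions because each $\varphi_{ij}$ was checked to satisfy $\varphi_{ij}(k^{\sigma})=(\varphi_{ij}(k))^{\rho}$. Hence $(\sn,^{\sigma})$ lies in the variety generated by $(\s_2\times\s_2,^{\rho})$, for every $n\geq 4$, including $n=4$.

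\emph{Step 2: $(\s_2\times\s_2,^{\rho})$ embeds into $(\s_4,^{\sigma})$, and hence into $(\sn,^{\sigma})$.} Define a map on generators by
\[
(1,\varepsilon)\mapsto 1,\quad (2,\varepsilon)\mapsto 2,\quad (\varepsilon,1)\mapsto 3,\quad (\varepsilon,2)\mapsto 4.
\]
In $\s_4$ the letters from $\{1,2\}$ and from $\{3,4\}$ do not commute. So the map should instead be $([\bfu]_{\m},[\bfv]_{\m})\mapsto[\bfu\,\bfv']_{\m}$, where $\bfv'$ is $\bfv$ with $1\mapsto 3$ and $2\mapsto 4$.

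I expect this to fail to be a homomorphism. For example, $(1,\varepsilon)(\varepsilon,3)$ and $(\varepsilon,3)(1,\varepsilon)$ give different $\ip$. So I would replace the target by a direct product of copies of $\s_4$ (or of $\sn$). A product is still inside the variety generated by $(\s_4,^{\sigma})$, which is enough.

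Concretely, I would take the two maps $\pi_1\colon(\bfu,\bfv)\mapsto[\bfu\cdot\overline{\bfv}]_{\m}$ into $\s_4$ restricted to the submonoid on $\{1,4\}$, and its mirror. A cleaner route is to observe that $(\s_2\times\s_2,^{\rho})$ is a homomorphic image or substructure of $(\s_2,^{\sigma_1})\times(\s_2,^{\sigma_1})$ under the swap-involution construction. This is the standard fact that for any semigroup $S$, $(S\times S^{\mathrm{op}}$-type$)$ with the swap involution generates the same variety as the free object on $S$. Identities of $(\s_2\times\s_2,^{\rho})$ are then the identities $\bfu\approx\bfv$ such that the plain projections of $\bfu\approx\bfv$, with $x^*$ read as new letters, hold in $\s_2$. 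Using this, Step 2 reduces to finding, inside $(\s_4,^{\sigma})$, a copy of $\s_2$ generated by elements $a,b$ such that $a,b,a^{\sigma},b^{\sigma}$ are "independent". I would take $a=[1]_{\m}$ and $b=[2]_{\m}$, so $a^{\sigma}=[4]_{\m}$ and $b^{\sigma}=[3]_{\m}$. The four letters $1,2,3,4$ are distinct, so by Remark \ref{rem:delete} any substitution into $\s_2$ with images treated as free letters can be realized. This would show that every identity of $(\s_4,^{\sigma})$ holds in $(\s_2\times\s_2,^{\rho})$.

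\emph{Main obstacle.} The main obstacle is making Step 2 rigorous: proving that $(\s_2\times\s_2,^{\rho})\in\var(\s_4,^{\sigma})$. I would first try the direct embedding of Step 2, verifying homomorphism and injectivity via Proposition \ref{pro:Sij} as in Lemma \ref{lem:varphi_equiv_inv3_sn}. If that fails, I would prove instead that any identity $\bfu\approx\bfv$ failing in $(\s_2\times\s_2,^{\rho})$ fails in $(\s_4,^{\sigma})$. This works by composing the witnessing substitution: send $x$ to the word in $\{1,2,3,4\}$ obtained from its two components. That lifts it with involution-compatibility, since $\rho$ swaps the components and applies $\sigma_1$ exactly as $\sigma$ does on $\{1,2\}\leftrightarrow\{4,3\}$. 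One then compares $\ev$, $\ip$ and $\fp$ by deleting letters. With Steps 1 and 2, $\var(\sn,^{\sigma})=\var(\s_4,^{\sigma})$ for $n>4$.
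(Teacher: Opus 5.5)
Your proposal is correct and is essentially the paper's argument. Step 1 is exactly the paper's use of the embedding $\phi_n$. Your fallback for Step 2 (lift a witnessing substitution from $\s_2\times\s_2$ to $\s_4$ by concatenating the two components, then compare after deleting letters) is the paper's argument unrolled: the letter-deleting map is $\varphi_{12}=\theta_{12}$, a surjective involution homomorphism $(\s_4,^{\sigma})\to(\s_2\times\s_2,^{\rho})$ that sends $1,2,3,4$ onto the four generators, so $(\s_2\times\s_2,^{\rho})$ is a homomorphic image of $(\s_4,^{\sigma})$. You can therefore drop the attempted embedding, which, as you suspected, is not a homomorphism, and the $S\times S^{\mathrm{op}}$ detour entirely.
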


\begin{proof}
It follows from Lemma \ref{lem:staln_embed_inv3_stal4} that
\[
\var(\s_4,^{\sigma})\subseteq \var(\s_5,^{\sigma}) \subseteq \cdots \subseteq\var (\s_2\times \s_2,^{\rho}).
\]
By Lemma \ref{lem:varphi_equiv_inv3_sn}, we have that $\varphi_{12}$ is a homomorphism form $(\s_4,^{\sigma})$ to $(\s_2\times \s_2,^{\rho})$. Since $\varphi_{12}$ maps all generators of $(\s_4,^{\sigma})$ to all generators of $(\s_2\times \s_2,^{\rho})$, $\varphi_{12}$ is surjective. Thus we have $\var(\s_2\times \s_2,^{\rho})\subseteq \var(\s_4,^{\sigma})$. Therefore the results holds.
\end{proof}

\subsection{Characterizations of identities and identity basis}

\begin{theorem}\label{lem:inv3ids4}
An identity $\bfu\approx \bfv$ holds in $(\s_4,^{\sigma})$ if and only if
\begin{enumerate}[\rm(i)]
  \item $\bfu\approx \bfv$ is balanced.
  \item  $\ip(\bfu)=\ip(\bfv), \fp(\bfu)=\fp(\bfv)$.
\end{enumerate}
\end{theorem}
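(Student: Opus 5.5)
The plan is to work directly with $(\s_4,^{\sigma})$, where $\sigma$ swaps $1\leftrightarrow 4$ and $2\leftrightarrow 3$, and to use Proposition~\ref{pro:Sij} throughout. Two elements of $\s_4$ are equal exactly when their evaluations, initial parts and final parts agree. Note that in the statement $\ip(\bfu)$ and $\fp(\bfu)$ are taken over the alphabet $\mathcal{X}\cup\mathcal{X}^*$, so $x$ and $x^*$ count as different letters. The point of having at least two $2$-cycles is that we can send two unrelated variables $x,y$ to letters whose images under $\sigma$ are four pairwise distinct letters. This is what will let $\s_4$ detect the full order of first and last occurrences.

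\emph{Necessity.} Let $\bfu\approx\bfv$ hold in $(\s_4,^{\sigma})$.
\begin{enumerate}[(1)]
\item For balancedness, fix $x\in\mathcal{X}$ and substitute $x\mapsto[1]_{\m}$, all other variables $\mapsto[\varepsilon]_{\m}$. Then $x^*\mapsto[4]_{\m}$. Comparing evaluations gives $\occ(x,\bfu)=\occ(x,\bfv)$ and $\occ(x^*,\bfu)=\occ(x^*,\bfv)$.
\item Since $\bfu$ and $\bfv$ are balanced, $\ip(\bfu)$ and $\ip(\bfv)$ are words in the same letters, so it suffices to compare the relative order of the first occurrences of any two of these letters.
\item For $x$ against $x^*$, use the substitution in (1): the first $1$ precedes the first $4$ in the image of $\bfu$ iff the first $x$ precedes the first $x^*$ in $\bfu$.
\item For letters over distinct plain variables $\overline{x}\neq\overline{y}$, substitute $\overline{x}\mapsto[1]_{\m}$, $\overline{y}\mapsto[2]_{\m}$ and the rest to $[\varepsilon]_{\m}$. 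Then $x,x^*,y,y^*$ go to four distinct single letters from $\{1,2,3,4\}$. The initial part of the image therefore records the relative order of the first occurrences of $x,x^*,y,y^*$ in $\bfu$.
\item Comparing the images of $\bfu$ and $\bfv$ in (3) and (4) yields $\ip(\bfu)=\ip(\bfv)$. The dual argument gives $\fp(\bfu)=\fp(\bfv)$.
\end{enumerate}

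\emph{Sufficiency.} Assume (i) and (ii), and let $\phi$ be any substitution into $\s_4$ extended to an involution homomorphism. By Remark~\ref{rem:delete} we may assume no variable of $\con(\bfu)$ maps to $[\varepsilon]_{\m}$.
\begin{enumerate}[(1)]
\item Balancedness gives $\ev(\phi(\bfu))=\ev(\phi(\bfv))$.
\item For a letter $a\in\con(\phi(\bfu))$, the first $a$ in $\phi(\bfu)$ lies inside $\phi(z_a)$. Here $z_a$ is the leftmost letter of $\ip(\bfu)$ with $a\in\con(\phi(z_a))$, since only a variable's first occurrence can introduce a new letter. As $\ip(\bfu)=\ip(\bfv)$, the same $z_a$ works for $\bfv$.
\item For letters $a\neq b$, the first $a$ precedes the first $b$ in $\phi(\bfu)$ iff one of the following holds:
\begin{itemize}
\item $z_a$ precedes $z_b$ in $\ip(\bfu)$;
\item $z_a=z_b$ and $a$ precedes $b$ in $\ip(\phi(z_a))$.
\end{itemize}
Both conditions depend only on $\ip(\bfu)$ and on the element $\phi(z_a)\in\s_4$. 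They therefore coincide for $\bfu$ and $\bfv$, so $\ip(\phi(\bfu))=\ip(\phi(\bfv))$.
\item Dually, using $\fp$ and last occurrences, $\fp(\phi(\bfu))=\fp(\phi(\bfv))$. Proposition~\ref{pro:Sij} then gives $\phi(\bfu)=\phi(\bfv)$.
\end{enumerate}

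The main obstacle is in the necessity part: checking that each chosen substitution is compatible with $^{\sigma}$, so that the letters assigned to $x^*$ and $y^*$ are forced and distinct from those of $x$ and $y$. This fails for $\sigma_0$ and $\sigma_1$, which is exactly why the weaker pairwise conditions appeared in Theorems~\ref{lem:inv1ids2} and~\ref{lem:inv2ids3}. By Theorem~\ref{thm:stal-id-inv3-stal4}, the same characterization then holds for every $n\geq 4$.
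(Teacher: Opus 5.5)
Your proposal is correct and follows essentially the same route as the paper. For necessity, both arguments substitute single letters, using that $x\mapsto[1]_{\m}$, $y\mapsto[2]_{\m}$ forces $x^*\mapsto[4]_{\m}$ and $y^*\mapsto[3]_{\m}$, which are four distinct letters. For sufficiency, both find the first occurrence of the variable whose image first introduces a given letter and transfer it to $\bfv$ via $\ip(\bfu)=\ip(\bfv)$. The only cosmetic difference is that you get balancedness directly from $x\mapsto[1]_{\m}$, whereas the paper goes through the sub-involution-monoid generated by $[1]_{\m},[4]_{\m}$ and Theorem~\ref{lem:inv2ids2}; this amounts to the same computation.
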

\begin{proof}
Let $\bfu\approx \bfv$ be any identity of $(\s_4,^{\sigma})$.
Note that the involution submonoid of $(\s_4,^{\sigma})$ generated by $[1]_{\m}$ and $[4]_{\m}$ is isomorphic to $(\s_2,^{\sigma_1})$. Thus condition (i) holds by Theorem \ref{lem:inv2ids2}.
Suppose that there exist $x, y\in\con(\bfu)=\con(\bfv)$ such that the first $x$ precedes the first $y$ in $\bfu$ but the first $y$ precedes the first $x$ in $\bfv$.
If $y=x^*$, then Let $\phi_1$ be a substitution such that $x\mapsto [1]_{\m}$ and any other variable is mapped to $[\varepsilon]_{\m}$. Then ${_1}1\prec_{\phi_1(\bfu)} {_1}4$ but ${_1}4\prec_{\phi_1(\bfv)} {_1}1$, a contradiction.
If $y\neq x^*$, then let $\phi_2$ be a substitution such that $x\mapsto [1]_{\m}, y\mapsto [2]_{\m}$ and any other variable is mapped to $[\varepsilon]_{\m}$. Then ${_1}1\prec_{\phi_2(\bfu)} {_1}2$ but ${_1}2\prec_{\phi_2(\bfv)} {_1}1$, a contradiction. Therefore $\ip(\bfu)=\ip(\bfv)$.
Symmetrically, $\fp(\bfu)=\fp(\bfv)$.

Conversely, let $\bfu\approx \bfv$ be any balanced identity  satisfying $\ip(\bfu)=\ip(\bfv), \fp(\bfu)=\fp(\bfv)$ and $\phi$ be any substitution from $\mathcal{X}$ to $\s_4$.
Note that $\ev(\phi(\bfu))=\ev(\phi(\bfv))$ since $\bfu \approx \bfv$ is balanced. Now, to show $\phi(\bfu)=\phi(\bfv)$, it suffices to show that for any $1\leq i<j\leq 4$, ${_1}i\prec_{\phi(\bfu)} {_1}j$ if and only if  ${_1}i\prec_{\phi(\bfv)} {_1}j$ and ${_\infty}i\prec_{\phi(\bfu)} {_\infty}j$ if and only if  ${_\infty}i\prec_{\phi(\bfv)} {_\infty}j$. By symmetry, we only consider the former case.
If ${_1}i\prec_{\phi(\bfu)} {_1}j$, then $\bfu$ can be written into the form $\bfu = \bfu_1z\bfu_2$
satisfying $j\not\in \con(\phi(\bfu_1))$ and $j \in \con(\phi(z))$. Note that $z\not\in\con(\bfu_1)$. Thus, ${_1}i\prec_{\phi(\bfu_1z)} {_1}j$ if and only if  ${_1}i\prec_{\phi(\bfu)} {_1}j$.
It follows from $\ip(\bfu)=\ip(\bfv)$ that $\bfv=\bfv_1z\bfv_2$ such that $z\not \in \con(\bfv_1)$ and $\con(\bfv_1)=\con(\bfu_1)$.
This implies that ${_1}i\prec_{\phi(\bfv)} {_1}j$.
Therefore for any $1\leq i<j\leq 4$, ${_1}i\prec_{\phi(\bfu)} {_1}j$ if and only if  ${_1}i\prec_{\phi(\bfv)} {_1}j$.
\end{proof}

An immediate consequence of Theorem \ref{lem:inv3ids4} is the following.

\begin{corollary}
The decision problem ${\textsc{Check-Id}}(\sn,^{\sigma})$ for each finite $n\geq 4$
belongs to the complexity class $\mathsf{P}$.
\end{corollary}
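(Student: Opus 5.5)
The statement to prove is the last corollary: ${\textsc{Check-Id}}(\sn,^{\sigma})$ lies in $\mathsf{P}$ for each finite $n\geq 4$. I would proceed exactly as for the analogous corollaries in the two previous sections, by reducing the problem to the combinatorial characterization in Theorem~\ref{lem:inv3ids4}.

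First, by Theorem~\ref{thm:stal-id-inv3-stal4}, for every $n>4$ the involution monoid $(\sn,^{\sigma})$ satisfies exactly the same identities as $(\s_4,^{\sigma})$. Hence, for each fixed $n\geq 4$, an instance $\bfu\approx\bfv$ of ${\textsc{Check-Id}}(\sn,^{\sigma})$ has answer `YES' if and only if it has answer `YES' as an instance of ${\textsc{Check-Id}}(\s_4,^{\sigma})$. By Theorem~\ref{lem:inv3ids4}, the latter holds if and only if two conditions hold:
\begin{itemize}
\item[(i)] $\bfu\approx\bfv$ is balanced;
\item[(ii)] $\ip(\bfu)=\ip(\bfv)$ and $\fp(\bfu)=\fp(\bfv)$.
\end{itemize}
So it suffices to show that both conditions can be tested in time polynomial in $|\bfu|+|\bfv|$.

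For (i), I would scan $\bfu$ and $\bfv$ once, tallying $\occ(x,\bfu)$ and $\occ(x,\bfv)$ for each variable $x\in\con(\bfu\bfv)\subseteq\mathcal{X}\cup\mathcal{X}^*$, and then compare the tallies. This takes at most quadratic time. For (ii), $\ip(\bfu)$ is computed by reading $\bfu$ from left to right and keeping each letter the first time it is seen. Symmetrically, $\fp(\bfu)$ is computed by reading from right to left and then reversing. The same is done for $\bfv$, and the resulting words are compared letter by letter. Each step is linear, or at worst quadratic with naive membership tests.

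The only point requiring care is conceptual rather than computational. The reduction depends on $n$ being fixed, together with Theorem~\ref{thm:stal-id-inv3-stal4}, so that the criterion does not depend on $n$. Note also that variables $x$ and $x^*$ are treated as distinct letters in both $\ip$ and $\fp$, as the characterization requires. Since all of these checks are polynomial, the problem belongs to $\mathsf{P}$.
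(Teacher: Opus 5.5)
Your proof is correct and follows the paper's route. The paper states this corollary as an immediate consequence of Theorem~\ref{lem:inv3ids4}, with Theorem~\ref{thm:stal-id-inv3-stal4} transferring the criterion from $(\s_4,^{\sigma})$ to every $(\sn,^{\sigma})$ with $n\geq 4$; as in the proof of the corresponding corollary for $^{\sigma_0}$, the only remaining work is to observe that balancedness and the equalities $\ip(\bfu)=\ip(\bfv)$, $\fp(\bfu)=\fp(\bfv)$ can be tested in polynomial time, which is exactly what you do.
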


\begin{theorem}\label{thm:inv3FBS4}
The variety generated by  $(\s_4,^{\sigma})$ is defined by the  identities \eqref{id: inv} and \eqref{id:xhxyty}.
\end{theorem}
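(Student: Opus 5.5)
We argue in two directions, following the pattern of the proofs of Theorems~\ref{thm:FBS2inv1} and \ref{thm:FBS3inv2}. For soundness, we check that $(\s_4,^{\sigma})$ satisfies the identities \eqref{id:xhxyty} (the identities \eqref{id: inv} hold because $^{\sigma}$ is an involution). Each of the two identities in \eqref{id:xhxyty} is balanced. Swapping the adjacent $xy$ to $yx$ in a position that is neither the first nor the last occurrence of both letters does not change $\ip$ or $\fp$. In the first identity, $x$ occurs before and after the swapped position. In the second, $x$ occurs before it and $y$ after it. Hence the swapped occurrences are, respectively, neither first nor last occurrences of $x$, or a non-first occurrence of $x$ together with a non-last occurrence of $y$. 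Swapping a non-first $x$ with a first $y$ does not change the relative order of first occurrences. The same holds for last occurrences. Also, $x^*$ and $x$ are different letters in the word, so no interaction arises there. By Theorem~\ref{lem:inv3ids4}, both identities hold, and so does every consequence of them, since the variety is closed under substitution.

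For completeness, let $\Sigma$ be the set of non-trivial identities of $(\s_4,^{\sigma})$ not deducible from \eqref{id: inv} and \eqref{id:xhxyty}. First we use \eqref{id: inv} to push every $^*$ down to the letters, so both sides are words in $F^{\varepsilon}_{\mathsf{inv}}(\mathcal{X})$. By Theorem~\ref{lem:inv3ids4}, every identity in $\Sigma$ is balanced with $\ip(\bfu)=\ip(\bfv)$ and $\fp(\bfu)=\fp(\bfv)$, where $x$ and $x^*$ are treated as distinct letters. We write $\bfu\approx\bfv$ as $\bfu'a\bfw\approx\bfv'b\bfw$ with $a\neq b$, and choose it with $|\bfu'|$ minimal. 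As before, $\bfu'a=\bfu_1bc\bfu_2$ with $b\notin\con(\bfu_2)$ and $b\neq c$. Moreover, $c\in\con(\bfv')$ and we may take $c\notin\con(\bfv_2)$, where $\bfv=\bfv_1c\bfv_2b\bfw$.

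The key step is to show that the occurrence of $b$ displayed in $\bfu$ can be swapped with the following $c$ by an application of \eqref{id:xhxyty}, regarded as an identity over the alphabet $\mathcal{X}\cup\mathcal{X}^*$ with $x,y$ substituted by letters.
\begin{itemize}
\item[(1)] This occurrence of $b$ is not the last occurrence of $b$ in $\bfu$, since in $\bfv$ the letter $b$ occurs later, at $\bfv'b$. Since $\fp(\bfu)=\fp(\bfv)$ and $\ev$ agrees, the last $b$ lies at or after the distinguished position in $\bfw$ on both sides.
\item[(2)] The displayed $c$ is not the first occurrence of $c$. Otherwise $\bfu$ has its first $b$ before its first $c$, while in $\bfv$ this $b$-position comes later. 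To rule this out we need a short argument: if the displayed $c$ were the first $c$ and the displayed $b$ the first $b$, then in $\bfv$ the first $c$ precedes the first $b$, which contradicts $\ip(\bfu)=\ip(\bfv)$. So either $c\in\con(\bfu_1)$ or $b\in\con(\bfu_1)$.
\end{itemize}
If $b\in\con(\bfu_1)$ and $b$ also occurs later, then $\bfu_1bc\bfu_2\bfw$ has the shape $b\,h\,bc\,k\,b$, and the first identity of \eqref{id:xhxyty} with $x=b$, $y=c$ swaps the pair. If $c\in\con(\bfu_1)$, we also need $c$ to occur after the position, or $b$ to occur after it. The latter holds by (1), giving the shape $c\,h\,bc\,k\,b$, which is the second identity of \eqref{id:xhxyty} (with $x=c$, $y=b$) read in reverse.

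Iterating this swap moves the displayed $b$ rightward past $\bfu_2$ to obtain $\bfu_1c\bfu_2b\bfw$. The same case analysis applies at each step, since each intermediate word is still an identity of $(\s_4,^{\sigma})$ with $\bfv$. The result shares a longer common suffix with $\bfv$, so it lies outside $\Sigma$ by minimality. This gives a contradiction, so $\Sigma=\emptyset$.

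The main obstacle is step (2): verifying at each swap that either $b$ or $c$ has an earlier occurrence. In particular, when the current $b$ is the first $b$ and $c$ is the first $c$, we must derive a contradiction from $\ip(\bfu)=\ip(\bfv)$ using the fact that $c$ occurs in $\bfv$ before the position of $b$. I expect this to need care with the intermediate words rather than only with $\bfu$ itself.
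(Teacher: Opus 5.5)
Your overall plan is the paper's: take a counterexample with minimal $|\bfu'|$, swap the displayed $bc$ using \eqref{id:xhxyty}, and iterate. The gap is that step (1) is false, and your case analysis depends on it.

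The displayed $b$ is the last $b$ of $\bfu'a$. It is therefore the last $b$ of $\bfu$ exactly when $b\notin\con(\bfw)$, and nothing rules this out. Your reason, that $b$ sits at position $|\bfv'|+1$ in $\bfv$, compares positions in two different words and says nothing about occurrences inside $\bfu$. Likewise, $\fp$ only constrains the relative order of last occurrences, not their absolute positions. For example, take $\bfu=cbcc$ and $\bfv=ccbc$. Both sides are balanced with $\ip=cb$ and $\fp=bc$, so $\bfu\approx\bfv$ holds in $(\s_4,^{\sigma})$. Here $\bfw=c$, $\bfu_1=c$, $\bfu_2$ is empty, and the displayed $b$ is the only $b$ in $\bfu$.

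Consequently you only justify the shapes $b\,h\,bc\,k\,b$ and $c\,h\,bc\,k\,b$. You never establish, without (1), that some letter of $\{b,c\}$ occurs after the pair. So the shapes $c\,h\,bc\,k\,c$ (which is exactly the example) and $b\,h\,bc\,k\,c$ are unhandled. Because you re-run (1) at every step of the iteration, where it again amounts to $b\in\con(\bfw)$, the gap persists throughout. The heading of (2), that the displayed $c$ is not the first $c$, is also false in general; see $\bfu=bbcb\approx bcbb=\bfv$. However, the conclusion you actually draw there, $b\in\con(\bfu_1)$ or $c\in\con(\bfu_1)$, is correct.

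The fix, as in the paper, is to replace (1) by the dual of (2): $b\in\con(\bfu_2\bfw)$ or $c\in\con(\bfu_2\bfw)$. Suppose neither holds. Then $b,c\notin\con(\bfw)$, so in $\bfu$ the last $c$ follows the last $b$ (the displayed pair). In $\bfv$, however, the last $c$ lies in $\bfv'$ and the last $b$ is the one at position $|\bfv'|+1$. This contradicts $\fp(\bfu)=\fp(\bfv)$.

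All four combinations are then covered by \eqref{id:xhxyty}:
\begin{itemize}
\item $b\,h\,bc\,k\,b$: the first identity with $x=b$, $y=c$;
\item $c\,h\,bc\,k\,b$: the second identity with $x=c$, $y=b$;
\item $c\,h\,bc\,k\,c$: the first identity with $x=c$, $y=b$, used right to left;
\item $b\,h\,bc\,k\,c$: the second identity with $x=b$, $y=c$, used right to left.
\end{itemize}

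For the iteration, both claims go through verbatim with $c$ replaced by the next letter $d$ of $\bfu_2$. They use only that $d\in\con(\bfv')$ and that $\occ(b,\bfu'a)=\occ(b,\bfv'b)$; the latter gives $b\notin\con(\bfv')$ whenever $b$ has no earlier occurrence. This also settles the worry you raise about intermediate words.
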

\begin{proof}
Clearly, $(\s_4,^{\sigma})$ satisfies the identities \eqref{id: inv} and \eqref{id:xhxyty} by Theorem~\ref{lem:inv3ids4}.
It suffices to show that each non-trivial identity satisfied by $(\s_4,^{\sigma})$ can be deduced from  \eqref{id:xhxyty}.
Let $\Sigma$ be the set of all non-trivial balanced identities satisfied by $(\s_4,^{\sigma})$ but  not  deducible from \eqref{id:xhxyty}. Suppose that $\Sigma\neq \emptyset$. Note that any balanced identity $\bfu\approx \bfv$ in $\Sigma$ can be written
uniquely in the form $\bfu' a\bfw \approx \bfv' b\bfw$ where $a,b$ are distinct variables and $|\bfu'|=|\bfv'|$. Choose an identity, say $\bfu\approx \bfv$, from $\Sigma$ such that when it is written as $\bfu' a\bfw\approx \bfv' b\bfw$,
the lengths of the words $\bfu'$ and $\bfv'$ are as short as possible. Since $\bfu\approx \bfv$ is balanced, we have $\con(\bfu'a)=\con(\bfv'b)$ and $\occ(x, \bfu'a)=\occ(x, \bfv'b)$ for any $x \in \con(\bfu'a)$.

Since $\con(\bfu'a)=\con(\bfv'b)$, we have $b\in \con(\bfu')$, and so $\bfu'a=\bfu_1 bc\bfu_2$ with $b\neq c$ and $b\not\in\con(\bfu_2)$.  Since $\con(\bfu'a)=\con(\bfv'b)$, we have $c\in \con(\bfv')$, and so $\bfv'=\bfv_1 c\bfv_2$ with $c\not\in\con(\bfv_2)$. Thus $\bfu=\bfu_1 bc\bfu_2\bfw$ and $\bfv=\bfv_1 c\bfv_2b\bfw$.
On the one hand, we claim that $b\in\con(\bfu_1)$ or $c\in\con(\bfu_1)$. If $b, c \not\in\con(\bfu_1)$, then $\occ(b, \bfu'a)=1$, and so $\bfu[b,c]$ starts with $b$ but $\bfv[b,c]$ does not, which contradicts (ii) of Theorem~\ref{lem:inv3ids4}.
On the other hand,
we claim that $b\in\con(\bfu_2\bfw)$ or $c\in\con(\bfu_2\bfw)$. If $b, c\not\in\con(\bfu_2\bfw)$, then $\bfu[b,c]$ ends with $c$ but $\bfv[b,c]$ does not, which contradicts (ii) of Theorem~\ref{lem:inv3ids4}.
Therefore, the
identities \eqref{id:xhxyty} can be used to convert $\bfu_1 bc\bfu_2 \bfw$ into $\bfu_1cb\bfu_2\bfw$. By repeating this process, the word $\bfu=\bfu'a\bfw=\bfu_1bc\bfu_2 \bfw$ can be converted into the word $\bfu_1c\bfu_2b \bfw$ by the
identities \eqref{id:xhxyty}.

Clearly the identities $\bfu_1c\bfu_2b\bfw \approx \bfu'a\bfw \approx \bfv'b\bfw$ hold in $(\s_4, ^{\sigma})$. Note that $|\bfu_1c\bfu_2b\bfw|=|\bfu'a\bfw|=|\bfv'b\bfw|$ and words in the identity $\bfu_1c\bfu_2b\bfw \approx \bfv'b\bfw$ have a longer common suffix than words in the identity $\bfu'a\bfw \approx \bfv'b\bfw$. Hence $\bfu_1c\bfu_2b\bfw \approx \bfv'b\bfw \notin \Sigma$ by the minimality assumption on the lengths of $\bfu'$ and $\bfv'$, that is, $\bfu_1c\bfu_2b\bfw \approx \bfv'b\bfw$ can be deducible from \eqref{id:xhxyty}. We have shown that $\bfu'a\bfw \approx \bfu_1c\bfu_2b\bfw$ can be deducible from \eqref{id:xhxyty}. Hence $\bfu'a\bfw \approx \bfv'b\bfw$ can be deducible from \eqref{id:xhxyty}, which contradicts $\bfu'a\bfw \approx \bfv'b\bfw \in \Sigma$. Therefore $\Sigma=\emptyset$.
\end{proof}

\section*{Acknowledgments}
The authors thank the anonymous referee for his/her careful reading of this paper and valuable suggestions,
especially the suggestion to give representations of $\s_n$ with involution by using the results of Cain et al.

\end{sloppypar}
\end{document}